\definecolor{cite}{HTML}{11871E}
\definecolor{url}{HTML}{698996}
\definecolor{link}{HTML}{912F1B}
\tikzstyle{arrow} = [-{Straight Barb[scale=0.8]}, line width=0.2mm]
\tikzset{
math to/.tip={Glyph[glyph math command=rightarrow]},
loop/.tip={Glyph[glyph math command=looparrowleft, swap]},
}
\newenvironment{myenum}[1]{%
\begin{enumerate}[label=#1,topsep=3pt,itemsep=2pt,partopsep=1pt,parsep=1pt]%
}%
{\end{enumerate}%
}
\newcommand{\myuline}[1]{%
  \uline{\phantom{#1}}%
  \llap{\contour{white}{#1}}%
}
\newcommand*{\saved@myuline}{}
\let\saved@myuline\myuline
\newcommand*{\mathuline}{%
  \mathpalette{\math@myuline\saved@myuline}%
}
\newcommand*{\math@myuline}[3]{%
  % #1: ulem command
  % #2: math style
  % #3: contents
  \mbox{#1{$#2#3\m@th$}}%
}
\renewcommand*{\myuline}{%
  \relax  
  \ifmmode
    \expandafter\mathuline
  \else
    \expandafter\saved@myuline
  \fi
}
\Crefname{prop}{Proposition}{Propositions}
\Crefname{lem}{Lemma}{Lemmas}
\Crefname{cor}{Corollary}{Corollaries}
\Crefname{thm}{Theorem}{Theorems}
\Crefname{alphThm}{Theorem}{Theorems}
\Crefname{defn}{Definition}{Definitions}
\Crefname{notation}{Notation}{Notations}
\Crefname{cons}{Construction}{Constructions}
\Crefname{rmk}{Remark}{Remarks}
\Crefname{obs}{Observation}{Observations}
\Crefname{trick}{Trick}{Tricks}
\Crefname{warning}{Warning}{Warnings}
\Crefname{conj}{Conjecture}{Conjectures}
\Crefname{assump}{Assumption}{Assumptions}
\Crefname{recollect}{Recollection}{Recollections}
\Crefname{terminology}{Terminology}{Terminologies}
\Crefname{conditionsec}{Condition}{Conditions}
\Crefname{fact}{Fact}{Facts}
\Crefname{question}{Question}{Questions}
\Crefname{example}{Example}{Examples}
\Crefname{figure}{Figure}{Figures}
\newtheorem{thm}[subsubsection]{Theorem}
\newtheorem{prop}[subsubsection]{Proposition}
\newtheorem{lem}[subsubsection]{Lemma}
\newtheorem{cor}[subsubsection]{Corollary}
\newtheorem{alphThm}{Theorem}
\newcommand{\neutralize}[1]{\expandafter\let\csname c@#1\endcsname\count@}
\newtheorem*{thm*}{Theorem}
\newtheorem*{prop*}{Proposition}
\newtheorem*{lem*}{Lemma}
\newtheorem*{cor*}{Corollary}
\newtheorem{alphConj}{Conjecture}
\newtheorem{alphCor}{Corrollary}
\newtheorem{alphProp}{Proposition}
\newtheorem{thmsec}{Theorem}[section]
\newtheorem{corsec}[thmsec]{Corollary}
\theoremstyle{definition}
\newtheorem*{defn*}{Definition}
\newtheorem{defn}[subsubsection]{Definition}
\newtheorem{cons}[subsubsection]{Construction}
\newtheorem{nota}[subsubsection]{Notation}
\newtheorem{recollect}[subsubsection]{Recollections}
\newtheorem{example}[subsubsection]{Example}
\newtheorem{rmk}[subsubsection]{Remark}
\newtheorem{obs}[subsubsection]{Observation}
\newtheorem{fact}[subsubsection]{Fact}
\theoremstyle{definition}
\newtheorem{questionsec}[thmsec]{Question}
\newtheorem{conditionsec}[thmsec]{Condition}
\newcommand{\terminalTCat}{\underline{\ast}}
\newcommand{\Pic}{\mathcal{P}\mathrm{ic}}
\newcommand{\susps}{\Sigma^{\infty}}
\newcommand{\spc}{\mathcal{S}}
\newcommand{\ind}{\mathrm{Ind}}
\DeclareMathOperator{\coind}{Coind}
\newcommand{\canonical}{\mathrm{can}}
\newcommand{\cat}{\mathrm{Cat}}
\newcommand{\op}{^{\mathrm{op}}}
\DeclareMathOperator{\mapsp}{map}
\newcommand{\sphere}{\mathbb{S}}
\DeclareMathOperator{\res}{Res}
\DeclareMathOperator{\mackey}{Mack}
\DeclareMathOperator{\presheaf}{Psh}
\newcommand{\orbit}{\mathcal{O}}
\newcommand{\singorbit}{\orbit^{\mathrm{sing}}}
\newcommand{\spectra}{\mathrm{Sp}}
\newcommand{\finite}{\mathrm{Fin}}
\newcommand{\inflated}{\mathrm{infl}}
\newcommand{\id}{\mathrm{id}}
\DeclareMathOperator{\map}{Map}
\DeclareMathOperator{\fib}{fib}
\DeclareMathOperator{\cofib}{cofib}
\DeclareMathOperator{\func}{Fun}
\newcommand{\module}{\mathrm{Mod}}
\newcommand{\stmodSmall}{\mathrm{stmod}}
\newcommand{\proper}{\mathcal{P}}
\DeclareMathOperator{\out}{Out}
\newcommand{\ambi}[3]{{{#1} \cap_{#2} {#3}}}
\newcommand{\udlcatC}{\udl{\category{C}}}
\newcommand{\udlcatD}{\udl{\category{D}}}
\newcommand{\udl}[1]{\underline{{#1}}}
\def\colim{\qopname\relax m{colim}}
\newcommand{\category}[1]{\mathcal{#1}}
\DeclareMathOperator{\spancategory}{Span}
\newcommand{\bbZ}{\mathbb{Z}}
\DeclareMathOperator{\infl}{infl}
\newcommand{\beckChevalley}{\mathrm{BC}}
\newcommand{\induced}{\mathrm{ind}}
\newcommand{\arrdisp}{0.33ex}
\newcommand{\arrdisplacementsp}{0.72ex}
\newcommand{\ardis}{\ar@<\arrdisp>}
\newcommand{\ardissp}{\ar@<\arrdisplacementsp>}
\newcommand{\classifyingspace}[2]{\myuline{E_{#2} #1}}
\newcommand{\cptfamily}{\mathrm{Cpt}}
\newcommand{\quotientspace}[2]{\myuline{B_{#1} #2}}
\newcommand{\uniquemap}[1]{#1}
\newcommand{\singpart}[1]{{#1}^{>1}}
\newcommand*\cocolon{%
        \nobreak
        \mskip6mu plus1mu
        \mathpunct{}%
        \nonscript
        \mkern-\thinmuskip
        {:}%
        \mskip2mu
        \relax
}
\title{Equivariant Poincar\'{e} duality for cyclic groups of prime order and the Nielsen realisation problem}
\author{\textsc{Kaif Hilman}\thanks{kaif@mpim-bonn.mpg.de} \and \textsc{Dominik Kirstein}\thanks{kirstein@mpim-bonn.mpg.de} \and \textsc{Christian Kremer}\thanks{kremer@mpim-bonn.mpg.de}}
\date{\today}
\begin{document}
\maketitle
\begin{abstract}
    In this companion article to \cite{HKK_PD}, we apply the theory of equivariant Poincar\'e duality  developed there in the special case of cyclic groups $C_p$ of prime order to remove, in a special case, a technical condition given by Davis--L\"uck \cite{davis2024nielsen} in their work on the Nielsen realisation problem for aspherical manifolds. Along the way, we will also give a complete characterisation of $C_p$--Poincar\'e spaces as well as introduce a genuine equivariant refinement of the classical notion of virtual Poincar\'e duality groups which might be of independent interest.
\end{abstract}

\begin{figure}[h]
\centering
\includegraphics[width=8cm]{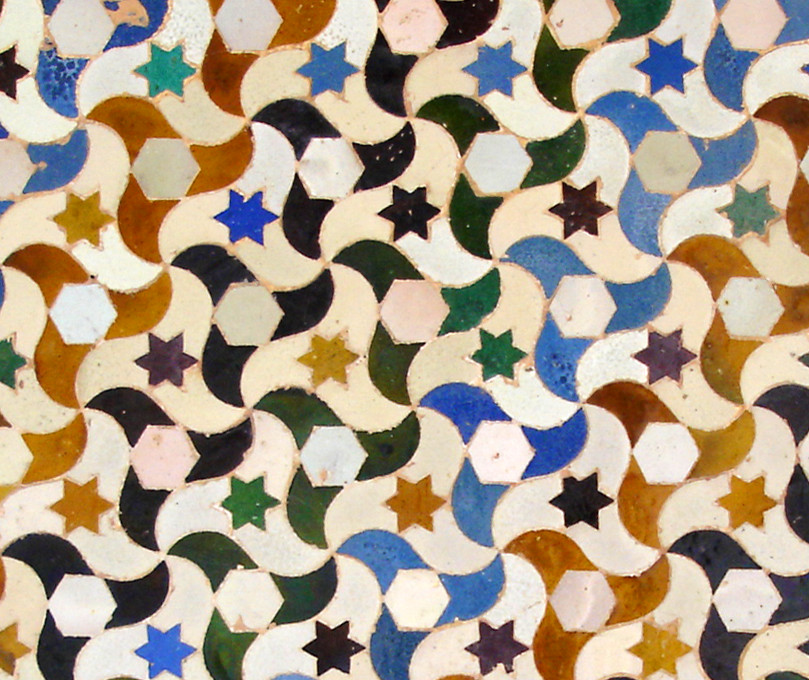}
\caption{A wall tiling at the Alhambra, in Granada, Spain, with symmetry group ``$p3$"\protect\footnotemark}
\label{fig:p3}
\end{figure}
\footnotetext{By Dmharvey, see https://commons.wikimedia.org/wiki/File:Alhambra-p3-closeup.jpg, license CC BY-SA 3.0}

\newpage

\section{Introduction}

A famous question due to Jakob Nielsen \cite{Nielsen_Original} in  geometric topology is the following: can any finite subgroup $G \subset \pi_0 \mathrm{hAut}(\Sigma_g)$  be lifted to an actual continuous group action on $\Sigma_g$, for $\Sigma_g$ a closed oriented surface of genus $g \geq 0$? This turns out to be possible, with Nielsen settling the case of $G$ finite cyclic, and Kerckhoff \cite{Kerckhoff_Nielsen} the general case.

In high-dimensions, asking for  too direct a generalisation of Nielsen's question inevitably results in wrong statements as there is simply no reason why a general homotopy equivalence $h \colon M \rightarrow M$ should be homotopic to any homeomorphism. However, rigidity phenomena in the theory of closed aspherical manifolds - closed connected manifolds with contractible universal covers - give some hope in generalising Nielsen's and Kerckhoff's results in this direction. This question may thus fairly be called the ``generalised Nielsen realisation problem for aspherical manifolds'' and has been investigated quite intensively, see for example \cite{Raymond-Scott_Failure_of_Nielsen,davisLeary,Block-Weinberger,lueck2022brown,davis2024nielsen}.\footnote{For completeness, we  mention here that there is also a large body of work on the Nielsen realisation problem for not necessarily aspherical 4--manifolds, c.f. for instance \cite{FarbLooijenga,BaragliaKonno,LeeDelPezzoAdvances,KonnoSpinAGT}, which has a much more geometric flavour.}

Unfortunately, even the hypothesis of closed aspherical manifolds is not  quite adequate, and we refer to \cite{Weinberger_Variations_on_a_theme_of_Borel} for a delightful survey of counterexamples.
Nevertheless, it turns out to be quite easy to dodge all potential reasons for counterexamples (for example, the failure for the existence of necessary group extensions due to Raymond--Scott \cite{Raymond-Scott_Failure_of_Nielsen}) by asking a slight variation of the generalised Nielsen problem:

\begin{questionsec}
    \label{quest:nielsen_via_extensions}
    Let $M$ be an aspherical manifold with fundamental group $\pi$ and consider an extension of groups
    $1 \rightarrow \pi \rightarrow \Gamma \rightarrow G \rightarrow 1$
    where $G$ is finite of odd order\footnote{Taking $G$ to be of odd order implies that certain $\mathrm{UNil}$-valued obstructions vanish, see \cite[Thm. 1.16.]{davis2024nielsen} or \cite[Sec. 6.4.]{Weinberger_Variations_on_a_theme_of_Borel}.}.
    Does the $\pi$-action on the universal cover $\widetilde{M}$ of $M$ extend to a $\Gamma$-action such that
    \[ \widetilde{M}^H \simeq \begin{cases}
        * \hspace{2mm }\text{if $H\leq \Gamma$, and $H$ is finite};\\
        \emptyset \hspace{2mm }\text{if $H\leq \Gamma$, and $H$ is infinite}?
    \end{cases} \]
    Equivalently, does the $\pi$-action on the universal cover of $M$ extend to a $\Gamma$-action in a way such that the resulting $\Gamma$-space models $\classifyingspace{\finite}{\Gamma}$, the universal space for proper $\Gamma$-actions?
\end{questionsec}

Provided the answer to \cref{quest:nielsen_via_extensions} is yes, one may construct a $G$-action on $M$ by using the residual action on $\pi \backslash \widetilde{M}$. For an account of the relation of \cref{quest:nielsen_via_extensions} to the generalized Nielsen realization problem in terms of homomorphisms $G \rightarrow \pi_0(\mathrm{hAut}(M)) \cong \mathrm{Out}(\pi_1(M))$, we refer the reader to the introduction of \cite{davis2024nielsen}.

\vspace{1mm}

In this article, we give a positive answer to \cref{quest:nielsen_via_extensions} in the very special situation when $M$ is high-dimensional, $\pi$ is hyperbolic, $G=C_p$ for $p$ odd, and if the extension is what we call \textit{pseudofree}, i.e. if each nontrivial finite subgroup $F \subset \Gamma$ satisfies $N_\Gamma F = F$.
Geometrically, this predicts that any $\Gamma$--manifold model $\widetilde{M}$ must have discrete fixed points (see \cref{rmk:Lueck_Weiermann_example}), whence the name. One of our main results is the following:

\begin{alphThm}\label{thm:existence_nielsen_realisation}
    Consider a group extension
    \begin{equation}
        \label{eq:extension_dl}
        1 \rightarrow \pi \rightarrow \Gamma \rightarrow C_p \rightarrow 1
    \end{equation}
    for an odd prime p. Suppose that
    \begin{myenum}{(\arabic*)}
        \item $\pi = \pi_1(M)$ for a closed orientable\footnote{Orientability is assumed only to simplify the exposition, and can be removed with some care.} aspherical manifold $M$ of dimension at least 5,
        \item $\pi$ is hyperbolic,
        \item $\Gamma$ is pseudofree.
    \end{myenum}
    Then there exists a cocompact
$\Gamma$-manifold model for $\classifyingspace{\finite}{\Gamma}$.
\end{alphThm}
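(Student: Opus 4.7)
My approach is to reduce \cref{thm:existence_nielsen_realisation} to the main construction of Davis--L\"uck in \cite{davis2024nielsen}, which produces a cocompact $\Gamma$-manifold model for $\classifyingspace{\finite}{\Gamma}$ from a closed aspherical $M$ with $\pi_1 M = \pi$ and an extension $1 \to \pi \to \Gamma \to G \to 1$ (with $G$ finite of odd order), provided one verifies a technical Poincar\'e-duality hypothesis on the local structure of any candidate $\Gamma$-space. The bulk of the work is then to discharge this hypothesis in our setting by feeding in the $C_p$-equivariant Poincar\'e duality theory of the companion paper \cite{HKK_PD}.

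First, I would unpack pseudofreeness to control the orbit structure of any prospective model. Under hypothesis (3), every nontrivial finite subgroup $F \leq \Gamma$ satisfies $N_\Gamma F = F$ and must map isomorphically onto $C_p$; so every such $F$ is a self-normalising copy of $C_p$. Consequently $\singpart{\classifyingspace{\finite}{\Gamma}}$ consists of a disjoint union of zero-dimensional $\Gamma$-orbits indexed by conjugacy classes of such $F$, and the local geometric data at each singular stratum is just the choice of a real $C_p$-representation. The Davis--L\"uck technical hypothesis then reduces in this special case to the requirement that $\Gamma$ qualifies as a genuinely equivariant virtual Poincar\'e duality group with respect to $\finite$, in the sense introduced in this article.

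Second, I would verify this requirement by invoking the complete characterisation of $C_p$-Poincar\'e spaces from \cite{HKK_PD}. The ingredients are: (a) $\pi$ is a torsion-free Poincar\'e duality group of dimension $\dim M$, supplying the underlying fundamental class over $\pi$; (b) every real $C_p$-representation sphere is automatically a $C_p$-Poincar\'e space, supplying the tangential orientation data at each singular orbit; and (c) hyperbolicity of $\pi$ ensures that the centralisers and normalisers appearing in the orbit decomposition retain the finiteness and coherence properties needed to assemble these local pieces into a single equivariant duality datum. The genuine equivariant refinement of virtual Poincar\'e duality developed in this paper is precisely the framework that governs such an assembly, and by construction $\Gamma$ will fit into it.

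Finally, with the equivariant Poincar\'e duality structure on $\classifyingspace{\finite}{\Gamma}$ established, Davis--L\"uck's machine produces the desired cocompact $\Gamma$-manifold model directly. The dimension hypothesis $\dim M \geq 5$ enters at this last step to activate the underlying surgery-theoretic input, and oddness of $p$ is what kills the $\mathrm{UNil}$ obstructions referenced in the introduction. The main obstacle I anticipate lies in the second paragraph: translating the extrinsic Davis--L\"uck condition into the intrinsic language of genuinely equivariant virtual Poincar\'e duality groups, and checking that hyperbolicity of $\pi$ together with pseudofreeness of $\Gamma$ is precisely enough to meet that intrinsic condition, is the genuinely new content of the proof and the place where the companion paper's results do essential work.
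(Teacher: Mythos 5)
Your overall strategy coincides with the paper's: quote Davis--L\"uck's existence theorem and verify its hypotheses, with the equivariant Poincar\'e duality theory supplying the only nontrivial hypothesis. But the proposal leaves precisely the two load-bearing steps as assertions. First, you claim that the Davis--L\"uck hypothesis ``reduces in this special case to the requirement that $\Gamma$ qualifies as a genuinely equivariant virtual Poincar\'e duality group.'' That is not a reduction one can simply read off: Davis--L\"uck's input is Condition (H), a concrete surjectivity statement in Borel homology (\cref{nota:condition_H}), and the implication ``$\pi\backslash\classifyingspace{\finite}{\Gamma}$ is $C_p$-Poincar\'e $\Rightarrow$ Condition (H)'' is exactly the new content of \cref{thm:condition_H_automatic}, proved via \cref{prop:property_H} by an analysis of the (linearised) gluing class of \cref{cons:generalised_gluing_class}, using the homological computations for invertible $C_p$-spectra in \cref{rec:invertible_Cp_spectra_Z} and the discreteness of the singular part. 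Your own closing sentence flags this translation as the anticipated obstacle, i.e.\ it is not carried out, so the argument as written does not establish Condition (H).

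Second, your verification that $\Gamma$ is a genuine virtual Poincar\'e duality group invokes the wrong ingredients. Real $C_p$-representation spheres play no role and cannot: appealing to ``tangential orientation data at each singular orbit'' presupposes a local linear (manifold) structure on $\classifyingspace{\finite}{\Gamma}$, which is what the theorem is trying to produce. The paper instead applies \cref{thm:extension_by_Cp_are_gvd}, whose hypotheses in the pseudofree case are: $\pi$ is a Poincar\'e duality group (your point (a), correct), the Weyl groups $W_\Gamma F=N_\Gamma F/F$ are Poincar\'e duality groups (trivially true since pseudofreeness makes them trivial), and compactness of $\classifyingspace{\finite}{\Gamma}$, which is where hyperbolicity enters via the Rips complex (\cref{example:meinstrup_schick}) --- not via ``finiteness and coherence properties of centralisers and normalisers.'' The proof of \cref{thm:extension_by_Cp_are_gvd} then rests on the characterisation \cref{thm:characterization_of_Cp_PD} (a result of this paper, not of \cite{HKK_PD}), checking that the relevant cofibre lies pointwise in $\spectra^{BC_p}_{\mathrm{ind}}$ using the local structure of the fixed-point inclusion from \cref{lem:fiber_of_fixed_point_inclusion}. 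Without these two steps your proposal reduces to a restatement of the intended strategy rather than a proof.
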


To the best of our knowledge, the most general existence result for manifold models for $\classifyingspace{\finite}{\Gamma}$ that does not refer to specific differential geometric constructions  is due to Davis-L\"uck \cite[Thm. 1.16]{davis2024nielsen}, whose methods are mainly surgery-- and K--theoretic.
They prove \cref{thm:existence_nielsen_realisation} under an additional necessary group homological ``Condition (H)'' (c.f. \cref{nota:condition_H}) on $\Gamma$ which is previously considered mysterious.
This Condition (H) was discovered by L\"uck in \cite{lueck2022brown} as necessary for the existence of manifold models for $\classifyingspace{\finite}{\Gamma}$, but was also used to construct certain models for $\classifyingspace{\finite}{\Gamma}$ which satisfy  some kind of equivariant Poincar\'e duality.
Davis-L\"uck then show in which situations these can actually be turned into equivariant manifolds.
Condition (H), however, seems complicated and hard to verify.
Our main contribution to this problem is to show that, in the situation of \cref{thm:existence_nielsen_realisation}, Condition (H) is actually automatic, and we achieve this by locating it in the more conceptual context of \textit{equivariant Poincar\'e duality} as developed in \cite{HKK_PD}. We hope that these techniques will allow us to go beyond the pseudofree situation where Davis-L\"uck applies, so that in the future we might be able to construct group actions on aspherical manifolds with nondiscrete fixed point sets. As will be clear later, our main input to remove Davis-L\"uck's Condition (H), \cref{thm:genuine_virtual_duality_recognition},   does not refer to discrete fixed points at all.  

To round off our commentary on \cref{thm:existence_nielsen_realisation}, it should also be noted that, in principle, the theorem reduces the geometric problem to a purely algebraic one of producing the appropriate group extension under the given hypotheses on $p$ and $\pi$.
As explained e.g. in \cite[p.1]{davis2024nielsen}, there is an obstruction measuring when a homomorphism $C_p \to \out(\pi)$ is induced by an extension \cref{eq:extension_dl}.
It vanishes for hyperbolic groups as they have trivial center.
For the Nielsen realisation problem, \cref{thm:existence_nielsen_realisation} thus has the following implication.

\begin{corsec}
    Let $M$ be a closed orientable aspherical manifold with hyperbolic fundamental group of dimension at least $5$, $p$ an odd prime, and $\alpha \colon C_p \rightarrow \out(\pi_1M)$ a homomorphism. Then the Nielsen realisation problem for $\alpha$ admits a solution, provided the associated extension $\Gamma$ is pseudofree.
\end{corsec}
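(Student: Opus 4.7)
The plan is to derive this corollary as a direct application of \cref{thm:existence_nielsen_realisation}, combined with a standard obstruction-theoretic construction of the extension and topological rigidity of aspherical manifolds. The substantive content is entirely in \cref{thm:existence_nielsen_realisation}; this corollary is essentially a bridging statement relating the extension-theoretic formulation of \cref{quest:nielsen_via_extensions} to the classical Nielsen realisation problem phrased in terms of $\out(\pi_1 M)$.

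First, I would construct an extension $1 \to \pi \to \Gamma \to C_p \to 1$ realizing $\alpha$. As indicated in the text preceding the corollary, the obstruction to the existence of such an extension lies in $H^3(C_p; Z(\pi))$; since $\pi = \pi_1 M$ is hyperbolic, its center is trivial, so this obstruction vanishes and the required $\Gamma$ exists. By hypothesis, the extension obtained in this way is pseudofree, and the remaining assumptions of \cref{thm:existence_nielsen_realisation} are clearly met, so it produces a cocompact $\Gamma$-manifold model $X$ for $\classifyingspace{\finite}{\Gamma}$.

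Next, I would turn this $\Gamma$-manifold into a $C_p$-action on a closed aspherical manifold homotopy equivalent to $M$. The restriction of the $\Gamma$-action on $X$ along $\pi \hookrightarrow \Gamma$ is free, because $\pi$ is torsion-free and $X$ is a proper $\Gamma$-space, and $X$ is contractible, being the fixed set $X^{\{1\}}$ for the trivial finite subgroup. Hence $X$ is a model for $E\pi$, and the cocompact quotient $Y := \pi \backslash X$ is a closed aspherical manifold with fundamental group $\pi$ carrying an induced $C_p = \Gamma/\pi$-action. The homomorphism $C_p \to \out(\pi_1 Y) = \out(\pi)$ induced by this action is precisely the outer action coming from the extension $\Gamma$, which is $\alpha$ by construction.

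Finally, I would apply topological rigidity to transfer the action from $Y$ to $M$. Since $Y$ and $M$ are closed aspherical manifolds of dimension at least $5$ with the same hyperbolic fundamental group, the Borel conjecture --- a known consequence of the Farrell--Jones conjecture for hyperbolic groups established by Bartels--L\"uck--Reich --- yields a homeomorphism $f \colon Y \to M$ whose underlying homotopy equivalence may be chosen to induce the identity on $\pi_1 = \pi$. Transporting the $C_p$-action on $Y$ along $f$ then gives the desired $C_p$-action on $M$ realizing $\alpha$. This last rigidity step is the only input beyond \cref{thm:existence_nielsen_realisation} itself of any real substance, and even it is standard in this setting.
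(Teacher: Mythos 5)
Your argument is correct and follows essentially the same route as the paper: the obstruction to realising $\alpha$ by an extension vanishes because $\pi$ (torsion-free, non-elementary hyperbolic) has trivial centre, \cref{thm:existence_nielsen_realisation} supplies the cocompact $\Gamma$-manifold model, and the residual $C_p$-action on the free cocompact quotient by $\pi$ realises $\alpha$. The only difference is that you spell out the final identification of $\pi\backslash X$ with $M$ via the Borel conjecture for hyperbolic groups, a step the paper leaves implicit by deferring the translation between the extension formulation and the $\out(\pi_1 M)$ formulation to the introduction of \cite{davis2024nielsen}.
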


Before moving on to elaborate on equivariant Poincar\'e duality as used in this work, we first state the aforementioned  Condition (H) and recall the argument of \cite[Lemma 1.9]{lueck2022brown} as to why it is  necessary for the conclusion of \cref{thm:existence_nielsen_realisation} to hold.  This shows that Condition (H) is not merely an artefact of the proof strategy of \cite{davis2024nielsen} but is rather a point that must be dealt with in one way or another.

For a pseudofree extension \cref{eq:std_extension}, a result of L\"uck--Weiermann \cite{LWClassifying} (c.f. \cref{thm:lueck_weiermann}) shows that the subspace $\classifyingspace{\finite}{\Gamma}^{>1}$ of points in $\classifyingspace{\finite}{\Gamma}$ with nontrivial isotropy is discrete, more precisely, $\classifyingspace{\finite}{\Gamma}^{>1} \simeq \coprod_{F \in \mathcal{M}} \Gamma/F$ where $F$ runs through a set of representatives of conjugacy classes of nontrivial finite subgroups. Writing $H^\Gamma_*(X) \coloneqq H_*(X_{h\Gamma};\mathbb{Z})$ for the integral Borel homology of a space $X$ with $\Gamma$--action, the condition may be stated as:

\begin{conditionsec}[Condition (H)]\label{nota:condition_H}
    For each finite subgroup $F\neq 1$ of $\Gamma$, the composite
    \begin{equation*}
        H_d^\Gamma(\classifyingspace{\finite}{\Gamma}, \classifyingspace{\finite}{\Gamma}^{>1}) 
        \xrightarrow{\partial} H^\Gamma_{d-1}(\classifyingspace{\finite}{\Gamma}^{>1}) 
        \simeq \bigoplus_{F' \in \mathcal{M}} H_{d-1}(BF')
        \xrightarrow{\mathrm{proj}_F} H_{d-1}(BF)
    \end{equation*}
    is surjective.
\end{conditionsec}

To see why condition (H) is necessary for the existence problem, suppose that there exists a $d$-dimensional cocompact manifold model $N$ for $\classifyingspace{\finite}{\Gamma}$.
Let us assume for simplicity that $N$ is smooth and that $\Gamma$ acts smoothly preserving the orientation.
As mentioned before, the singular part $N^{>1}$ of $N$ of points with nontrivial isotropy is discrete if the extension is pseudofree.
Denote by $Q$ the complement of an equivariant tubular neighbourhood of $N^{>1}$ in $N$ with boundary $\partial Q$.
Then the $\Gamma$-action on $Q$ is free and the quotient pair $(\Gamma \backslash Q, \Gamma \backslash \partial Q)$ is a compact $d$-manifold with boundary. See \cref{fig:discs} for an illustration.
Thus, for every path component $L$ of $\Gamma\backslash\partial Q$, we obtain the commutative diagram
\begin{equation*}
\begin{tikzcd}
    H_d(\Gamma \backslash Q,\Gamma \backslash \partial Q) \ar[r] \ar[d,"\simeq"] 
    & H_{d-1}(\Gamma \backslash \partial Q) \ar[d]  \ar[r, "\mathrm{proj}",two heads]
    & H_{d-1}(L) \ar[d, two heads] 
    \\
    H_d^\Gamma(\classifyingspace{\finite}{\Gamma}, \classifyingspace{\finite}{\Gamma}^{>1}) \ar[r] 
    & H^\Gamma_{d-1}(\classifyingspace{\finite}{\Gamma}^{>1})  \ar[r, "\mathrm{proj}"]
    & H_{d-1}(BF).
\end{tikzcd}
\end{equation*}
The left vertical arrow is an equivalence using excision and that homology of $(\Gamma \backslash Q, \Gamma \backslash \partial Q)$ agrees with Borel homology of $(Q, \partial Q)$ as the $\Gamma$-action is free.
The fundamental class of $(\Gamma \backslash Q, \Gamma \backslash \partial Q)$ gets sent to a fundamental class of each boundary component along the upper composite, and so the top composite is surjective. Moreover, note that each component $L$ of $\Gamma \backslash \partial Q$ is obtained as the quotient of a sphere by a free action of the isotropy group $F$ of the corresponding fixed point in $N^{>1}$. 
Recall that for any free $F$-action on a $(d-1)$--sphere $S$ for a finite group $F$, the map $F \backslash S \rightarrow BF$ is is $(d-1)$--connected and induces a surjection on homology up to degree $d-1$ so the right vertical map is surjective.
Together this shows that the bottom composite is surjective in each component.

\begin{figure}[h]
\label{fig:discs}
\centering
\includegraphics[width=4cm]{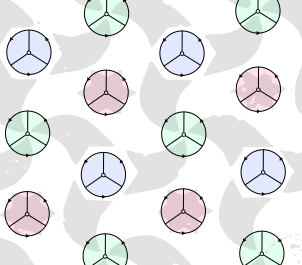}
\caption{Discs around the singular part for the symmetry group p3}
\end{figure}

\subsection{Equivariant Poincar\'e duality}
Equivariant Poincar\'e duality is fundamentally about understanding group actions on manifolds. The notion of a $G$-equivariant Poincar\'e complex is designed to satisfy more or less all the homological or cohomological constraints that a smooth $G$-manifold satisfies. In particular, satisfying equivariant Poincar\'e duality can \textit{obstruct} the existence of certain group actions on manifolds. This philosophy is old and has been quite successful, and we exploited it in \cite{HKK_PD} to generalise   some classical nonexistence results\footnote{See also the references therein for more results of this type.} using new methods. 

In this article, we want to carry across a different point: equivariant Poincar\'e duality does not only obstruct, but is also quite useful to \textit{construct} group actions on manifolds. The testing ground we chose to demonstrate our claim is \cref{quest:nielsen_via_extensions}. Here, we will only employ the theory of equivariant Poincar\'e duality for the group $G=C_p$, and since this case is much simpler than for general compact Lie groups, we hope that it will demystify the  more abstract discussions in \cite{HKK_PD}. In particular, we can keep the level of  equivariant stable homotopy theory used throughout at a minimum, while still showing  some standard manipulations. We hope that readers with an interest in geometric topology and homotopy theory might find this to be a useful first exposition to categories of genuine $G$-spectra and their uses.

\subsubsection*{Equivariant Poincar\'e duality for the group $C_p$}

Our first theoretical goal is to give a characterisation of $C_p$-Poincar\'e duality that is adapted to our application on the Nielsen realisation problem.
In \cite{HKK_PD} we showed that if $\udl{X}$ is a $C_p$-space, then the following hold:
\begin{myenum}{(\arabic*)}
    \item If $\udl{X}$ is $C_p$-Poincar\'e, then the underlying space $X^{e}$ and the fixed points $X^{C_p}$ are nonequivariantly Poincar\'e (c.f. \cite[Thm. C]{HKK_PD}).
    \item Even if $\udl{X}$ is assumed to be a compact (i.e. equivariantly finitely dominated) $C_p$-space, requiring $X^{e}$ and $X^{C_p}$ to be nonequivariantly Poincar\'e is not sufficient to guarantee that $\udl{X}$ is $C_p$-Poincar\'e (c.f. \cite[Cor. 5.1.16]{HKK_PD}).
\end{myenum}
In this article, we identify the precise additional condition needed to ensure that $\udl{X}$ is $C_p$--Poincar\'e in the situation of (2).

\begin{alphThm}[c.f. \cref{thm:characterization_of_Cp_PD}]
    \label{mainthm:recognition}
    Let $\udl{X}$ be a compact $C_p$-space. 
    Denote by $\varepsilon \colon X^{C_p} \rightarrow X^{e}$ the inclusion of the fixed points, and assume that both $X^{C_p}$ and $X^{e}$ are nonequivariantly Poincar\'e. Let $D_{X^{C_p}}\in \func(X^{C_p},\spectra^{BC_p})$ be the dualising sheaf of the fixed point space. Then $\udl{X}$ is $C_p$-Poincar\'e if and only if the cofibre of the adjunction unit morphism in $\func(X^{C_p},\spectra^{BC_p})$
    \[ D_{X^{C_p}} \rightarrow \varepsilon^* \varepsilon_! D_{X^{C_p}} \]
    pointwise lies in $\spectra^{BC_p}_\ind\subseteq \spectra^{BC_p}$, the stable full subcategory generated by the image of $\ind^{C_p}_e\colon \spectra \rightarrow \spectra^{BC_p}$. 
\end{alphThm}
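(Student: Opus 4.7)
The plan is to reduce $C_p$-Poincaré duality of $\underline{X}$ to local invertibility of the dualising sheaf $D_{\underline{X}}$ in the parametrized genuine $C_p$-stable category over $\underline{X}$ (equivalent by \cite{HKK_PD}), and then isolate the gluing obstruction at the fixed-point stratum. At a point $x\notin X^{C_p}$, invertibility of $D_{\underline{X}}$ is automatic from $X^{e}$ being Poincaré together with the free $C_p$-action near $x$. At a point $x\in X^{C_p}$, the stalk lies in $\spectra^{C_p}$, and invertibility in $\spectra^{C_p}$ is controlled by the isotropy separation $\spectra^{C_p}\simeq \spectra^{BC_p}\times_{\spectra^{tC_p}}\spectra$: one must check invertibility of the Borel piece, invertibility of the geometric-fixed-point piece, and a compatibility between the two via the Tate construction.

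The two Poincaré hypotheses together with \cite[Thm. C]{HKK_PD} recognise the Borel piece as an invertible Borel $C_p$-spectrum coming from $X^{e}$ and the geometric-fixed-point piece as $D_{X^{C_p}}$; both are therefore automatically invertible. What remains is the Tate compatibility, which I would globalise. The key step is a Beck--Chevalley identification along $\varepsilon\colon X^{C_p}\to X^{e}$ in parametrized Borel $C_p$-spectra, rewriting the pointwise Tate comparison as a Tate equivalence between $\varepsilon^{*}\varepsilon_{!}D_{X^{C_p}}$ and $D_{X^{C_p}}$. Using the standard identification of $\spectra^{BC_p}_{\ind}$ with the kernel of $(-)^{tC_p}\colon \spectra^{BC_p}\to \spectra^{tC_p}$ (induced Borel spectra are Tate-acyclic, and the localising subcategory they generate exhausts the Tate-acyclic objects), this Tate equivalence is precisely the condition that the cofibre $\cofib\bigl(D_{X^{C_p}}\to \varepsilon^{*}\varepsilon_{!}D_{X^{C_p}}\bigr)$ lies pointwise in $\spectra^{BC_p}_{\ind}$. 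Running this analysis in both directions yields the ``if and only if".

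The main obstacle I expect is the Beck--Chevalley step: identifying the abstract Tate comparison produced by the isotropy separation with the concrete adjunction-unit cofibre requires carefully tracking how $D_{\underline{X}}$ restricts from $\underline{X}$ to $X^{C_p}$ through the inclusion $\varepsilon$, essentially encoding the equivariant tubular-neighbourhood picture of $X^{C_p}\subset X^{e}$ in categorical terms via the base-change data of $\varepsilon_{!}$ and $\varepsilon^{*}$. Once this matching is in place, the remaining steps are formal consequences of isotropy separation and the characterisation of $\spectra^{BC_p}_{\ind}$.
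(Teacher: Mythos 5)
Your overall reduction---pointwise invertibility of the dualising sheaf, automatic at free orbits, so all content sits at the stalks over $X^{C_p}$---agrees with the paper, but the core of your argument has a genuine gap: the ``compatibility via the Tate construction'' is never pinned down, and no formal reading of the isotropy-separation square gives the required criterion. For a genuine $C_p$-spectrum $E$ with $E^e$ and $\Phi^{C_p}E$ invertible, invertibility is \emph{not} detected by asking the gluing map $\Phi^{C_p}E \to (E^e)^{tC_p}$ to be an equivalence (for $E=\sphere_{C_p}$ this map is $p$-completion by the Segal conjecture), and the condition you eventually substitute---that a certain comparison map becomes an equivalence after $(-)^{tC_p}$, i.e.\ its cofibre is Tate-acyclic---is not known to imply invertibility either. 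The paper instead replaces invertibility by compactness (\cref{lem:invertible_compact_relation}) and then does real, non-formal work to recognise compactness: it constructs a genuinely compact interpolating object for the relevant cospan by cellular manoeuvres (\cref{lem:tucking,lem:extending_maps_from_geom_fixedpoints,lem:technical_comparison}), and it proves that a Borel $C_p$-spectrum with compact underlying spectrum which moreover lies in $\spectra^{BC_p}_{\induced}$ is compact in $\spectra^{BC_p}$ (\cref{lem:induced_spectra_with_underlying_compact}, via the stable module category and the formula $\mapsp_{\stmodSmall_{C_p}(\sphere)}(X,Y)\simeq (Y\otimes DX)^{tC_p}$). Your proposal declares exactly this implication---from ``the cofibre is induced'' back to genuine invertibility---to be a formal consequence of isotropy separation; it is not, and this is the hard direction of the theorem (\cref{example:jones_counterexample} shows how far the fixed-point data alone is from controlling the genuine stalk).

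A second, related problem is the ``standard identification'' of $\spectra^{BC_p}_{\induced}$ with the kernel of $(-)^{tC_p}$. By definition $\spectra^{BC_p}_{\induced}$ is the idempotent-complete \emph{stable} (thick) subcategory generated by induced objects, so its members are Tate-acyclic, but the converse containment is unjustified and is in any case not what the theorem asserts; in the relevant regime the correct substitute is precisely \cref{lem:induced_spectra_with_underlying_compact}, which needs the extra hypothesis of underlying compactness, supplied in the paper by the compact interpolant and the invertibility of $D_{X^e}$. With only Tate-acyclicity in hand, your forward direction proves a weaker statement and your reverse direction has no route back to compactness. Finally, the map in the theorem involves $D_{X^{C_p}}$ and $\varepsilon^*\varepsilon_!D_{X^{C_p}}$, but the proof must also relate both of these to the stalks of $D_{\udl{X}}$ itself; in the paper this is the wrong-way map $\beckChevalley^{\varepsilon}$ together with \cref{lem:equivalence_mod_induced} and the comparison square in the proof of \cref{thm:characterization_of_Cp_PD}, and it cannot be absorbed into the Beck--Chevalley bookkeeping you sketch.
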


More details on the terms appearing in the theorem may be found in the material leading up to \cref{thm:characterization_of_Cp_PD}. In effect, this result gives an interpretation of the genuine equivariant notion of Poincar\'e duality purely in terms of nonequivariant and Borel equivariant properties. The method of proof is based on various cellular manoeuvres in equivariant stable homotopy theory developed in \cref{section:PD_for_C_p}, which might be of independent interest. Armed with this characterisation, we may now return to the modified Nielsen \cref{quest:nielsen_via_extensions} as we explain next.

\subsubsection*{Genunine virtual Poincar\'e duality groups and the proof of \cref{thm:existence_nielsen_realisation}}

Suppose we are given an extension of groups
\begin{equation}
    \label{eq:std_extension}
    1 \rightarrow \pi \rightarrow \Gamma \rightarrow C_p \rightarrow 1
\end{equation}
so that there exists a cocompact $\Gamma$-manifold $N$ modeling the $\Gamma$-homotopy type $\classifyingspace{\finite}{\Gamma}$. Under reasonable assumptions, we might expect that the $C_p$-space $\pi \backslash N$ is $C_p$-Poincar\'e. Motivated by this expectation, we will call $\Gamma$ a \textit{genuine virtual Poincar\'e duality group} if $\pi \backslash \classifyingspace{\finite}{\Gamma}$ is a $C_p$-Poincar\'e space. In fact, we define the notion of a genuine virtual Poincar\'e duality group in a broader context in \cref{section:gvpd_groups} and we hope that it can be a useful supplement to the classical theory of virtual Poincar\'e duality groups that appear for example in \cite{Brown_Cohomology_of_Groups}. In any case, using \cref{mainthm:recognition}, we will show our main result:

\begin{alphThm}[c.f. \cref{thm:extension_by_Cp_are_gvd}]\label{thm:genuine_virtual_duality_recognition}
    For the extension \cref{eq:std_extension}, assume that $\classifyingspace{\finite}{\Gamma}$ is compact. If
    \begin{myenum}{(\arabic*)}
        \item  for each nontrivial finite subgroup $F \subset \Gamma$, the group $W_\Gamma F$ is a Poincar\'e duality group;
        \item the group $\pi$ is a Poincar\'e duality group,
    \end{myenum}
    then $\Gamma$ is a genuine virtual Poincar\'e duality group.
\end{alphThm}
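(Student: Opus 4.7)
The plan is to apply \cref{mainthm:recognition} to the compact $C_p$-space $\udl{X} := \pi\backslash\classifyingspace{\finite}{\Gamma}$, whose compactness is inherited from that of $\classifyingspace{\finite}{\Gamma}$. First I would identify the underlying and fixed spaces. Since $\pi$ is a Poincar\'e duality group and hence torsion-free, it acts freely on the contractible space $\classifyingspace{\finite}{\Gamma}$, so $X^{e} \simeq B\pi$, which is Poincar\'e by hypothesis~(2). A point of $\pi\backslash\classifyingspace{\finite}{\Gamma}$ is $C_p$-fixed precisely when its $\Gamma$-isotropy is a nontrivial finite subgroup $F$; torsion-freeness of $\pi$ forces such an $F$ to map isomorphically onto $C_p$. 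Partitioning by conjugacy class, identifying $\classifyingspace{\finite}{\Gamma}^F \simeq EW_\Gamma F$ (valid since $W_\Gamma F$ is torsion-free by~(1)), and using the group-theoretic identity $W_\Gamma F \cong N_\pi F$, I would obtain $X^{C_p} \simeq \coprod_{[F]} BW_\Gamma F$; each component is Poincar\'e by~(1).

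The bulk of the argument is to verify the cofibre condition of \cref{mainthm:recognition}. I would analyze $\varepsilon\colon X^{C_p} \to X^{e}$ component by component, each being the classifying map $BN_\pi F \to B\pi$ of the inclusion of subgroups. At $x \in BW_\Gamma F$, the homotopy fibre of $\varepsilon$ at $\varepsilon(x)$ is the discrete set $\coprod_{[F']} \pi/N_\pi F'$, so the stalk of $\varepsilon^*\varepsilon_! D_{X^{C_p}}$ at $x$ decomposes as a corresponding direct sum of stalks of $D_{X^{C_p}}$, and the unit picks out the ``diagonal'' summand corresponding to the trivial lift $(F, e)$. The cofibre is therefore the direct sum over all remaining summands.

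The main obstacle is to show that this cofibre lies pointwise in $\spectra^{BC_p}_\ind$. Morally this is the homotopical analogue of the fact that the $C_p$-representation on the normal bundle of the fixed locus in a smooth $C_p$-manifold contains no trivial summand, since $p$ is prime. The $C_p = \Gamma/\pi$-action on the cofibre combines (i)~the permutation action on the indexing cosets coming from conjugation by a lift of a generator of $C_p$, and (ii)~the intrinsic $C_p$-action on $D_{X^{C_p}}$ encoding the orientation data of $X^{C_p}$ inside $X^{e}$. The heart of the argument is to show that these two actions interact in such a way that every $C_p$-orbit of non-diagonal summands yields a free Borel $C_p$-spectrum, i.e., a spectrum in the essential image of $\ind^{C_p}_e\colon \spectra \to \spectra^{BC_p}$. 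Making this precise, likely via the cellular manoeuvres in equivariant stable homotopy theory developed in \cite{HKK_PD}, is the central technical step.
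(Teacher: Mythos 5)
Your overall strategy is the same as the paper's: apply \cref{mainthm:recognition} to $\udl{X}=\pi\backslash\classifyingspace{\finite}{\Gamma}$, identify $X^e\simeq B\pi$ and $X^{C_p}\simeq\coprod BW_\Gamma F$, and then analyse the cofibre of $D_{\udl{X^{C_p}}}\to\varepsilon^*\varepsilon_!D_{\udl{X^{C_p}}}$ pointwise via a fibre decomposition with a distinguished diagonal summand. That much is correct and matches \cref{thm:extension_by_Cp_are_gvd}.

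The gap is in the final, load-bearing step. You state that ``the heart of the argument is to show that \ldots every $C_p$-orbit of non-diagonal summands yields a free Borel $C_p$-spectrum,'' and then defer this to unspecified ``cellular manoeuvres.'' But this is not where the difficulty lies, and cellular manoeuvres are not what is needed. What is actually required is a concrete group-theoretic input: after base-changing the inclusion $\udl{X^{C_p}}\hookrightarrow\udl{X}$ along a fixed point $x\colon\udl{*}\to\udl{X}$, the fibre is a $C_p$-set equivalent to $\res^\Gamma_{C_p}\coprod_{F'}\Gamma/N_\Gamma F'$ for a section $s\colon C_p\to\Gamma$ determined by $x$ (\cref{lem:fiber_of_fixed_point_inclusion}), and one must prove that this $C_p$-set has exactly one fixed point and is otherwise free. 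That is precisely \cref{lem:free_action_on_quotient_by_normaliser}: for maximal finite $F,F'\subset\Gamma$, the $F$-action on $\Gamma/N_\Gamma F'$ is free if $F\not\sim F'$ and $(\Gamma/N_\Gamma F')^F=*$ if $F\sim F'$, proved by an elementary uniqueness-of-maximal-finite-subgroup argument. Without this your proposal does not actually establish the cofibre condition; the ``normal bundle has no trivial summand'' heuristic gestures at the right phenomenon but is not a proof, and indeed the required statement is about isotropy of coset spaces rather than about representations.

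A second, more minor point: you speak of ``two actions interacting'' --- the permutation action on cosets and the intrinsic action on $D_{X^{C_p}}$. In the paper's argument no such interaction needs to be controlled. Once one knows the non-diagonal part $\udl{S}$ of the fibre $\udl{T}\simeq\udl{*}\sqcup\udl{S}$ is a free $C_p$-set, the colimit $\colim_{\udl{S}}a^*D_{\udl{X^{C_p}}}|_{\udl{S}}$ is automatically induced, regardless of what local system sits on top. So the orientation data on $D_{X^{C_p}}$ plays no role in this step, and framing the problem as an interaction between the two actions overcomplicates matters and obscures that the missing ingredient is purely the freeness of the permutation action.
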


The significance of this result is that we relate the new notion of genuine virtual Poincar\'e duality groups, which enjoys good conceptual  properties, with the classical notion of Poincar\'e duality groups, which is easier to check. It will turn out that, in the situation of \cref{thm:existence_nielsen_realisation}, the group $\Gamma$ satisfies the conditions of \cref{thm:genuine_virtual_duality_recognition}, and so we see that $\pi \backslash \classifyingspace{\finite}{\Gamma}$ is $C_p$-Poincar\'e. In particular, this opens up the way for an equivariant fundamental class analysis (c.f. \cite[\textsection 4.5]{HKK_PD}) on the problem at hand, yielding the following:

\begin{alphThm}
\label{thm:condition_H_automatic}
    Let $\Gamma$ be as in \cref{thm:existence_nielsen_realisation}.
    Then $\Gamma$ satisfies Condition (H).
\end{alphThm}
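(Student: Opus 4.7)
The plan is to extract \cref{nota:condition_H} from equivariant Poincar\'e duality on $Y := \pi \backslash \classifyingspace{\finite}{\Gamma}$, viewed as a $C_p$-space via the quotient $\Gamma \twoheadrightarrow C_p$. I would obtain the $C_p$-Poincar\'e structure by verifying the hypotheses of \cref{thm:genuine_virtual_duality_recognition}. Since $\pi$ has finite index in $\Gamma$, hyperbolicity passes to $\Gamma$, and hyperbolic groups admit cocompact models for $\classifyingspace{\finite}{\Gamma}$ (for example via the Rips complex), so this space is compact. The group $\pi = \pi_1(M)$ is a Poincar\'e duality group since $M$ is a closed aspherical manifold. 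Finally, pseudofreeness forces $W_\Gamma F = 1$ for every nontrivial finite $F \subset \Gamma$, which is trivially a Poincar\'e duality group of dimension zero. This verifies the hypotheses of \cref{thm:genuine_virtual_duality_recognition}, so $Y$ is $C_p$-Poincar\'e.

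Next, I would translate the Borel homology groups in \cref{nota:condition_H} to the $C_p$-equivariant setting on $Y$. Since $\pi$ acts freely on $\classifyingspace{\finite}{\Gamma}$, taking $\pi$-orbits identifies $H^\Gamma_\ast(\classifyingspace{\finite}{\Gamma}) \cong H^{C_p}_\ast(Y)$ and $H^\Gamma_\ast(\classifyingspace{\finite}{\Gamma}^{>1}) \cong H^{C_p}_\ast(Y^{C_p})$. Torsion-freeness of $\pi$ combined with pseudofreeness forces every nontrivial finite subgroup of $\Gamma$ to project isomorphically onto $C_p$, so the L\"uck-Weiermann discussion preceding \cref{nota:condition_H} shows that $Y^{C_p}$ is a discrete set indexed by the conjugacy classes of nontrivial finite subgroups $F \in \mathcal{M}$, each component carrying the trivial $C_p$-action. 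In these terms, \cref{nota:condition_H} asks precisely that for each component $y \in Y^{C_p}$ the composite
\[ H^{C_p}_d(Y, Y^{C_p}) \xrightarrow{\partial} H^{C_p}_{d-1}(Y^{C_p}) \twoheadrightarrow H_{d-1}(BC_p) \]
is surjective.

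The final step is to produce this surjection from the equivariant fundamental class of $Y$. Since $Y$ is $C_p$-Poincar\'e, its equivariant fundamental class yields a class in $H^{C_p}_d(Y, Y^{C_p})$ whose boundary at each isolated $C_p$-fixed point recovers a local equivariant fundamental class. A local analysis would identify this local class with (stably) a free $C_p$-action on a homotopy $(d-1)$-sphere, namely the equivariant link of the fixed point, whose further projection to $BC_p$ is $(d-1)$-connected and hence surjective on $H_{d-1}$. This is the homotopical analogue of the geometric tubular-neighbourhood argument reviewed in the introduction.

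The main obstacle will be to carry out the local analysis at each $C_p$-fixed point rigorously: namely, to show that in a $C_p$-Poincar\'e space of formal dimension $d$ whose fixed locus contains an isolated point $y$, the ``link" of $y$ genuinely behaves like a free $C_p$-action on a homotopy $(d-1)$-sphere compatible with the boundary map in Borel homology. This should follow from the cellular methods of \cref{section:PD_for_C_p} underlying \cref{mainthm:recognition} together with the equivariant fundamental class formalism developed in \cite[\S4.5]{HKK_PD}, but assembling the pieces into the exact form of \cref{nota:condition_H} will require careful bookkeeping of the relevant orientations and Borel constructions.
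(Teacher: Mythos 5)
Your reduction is the same as the paper's: you verify the hypotheses of \cref{thm:genuine_virtual_duality_recognition} exactly as the paper does (compactness via the Rips complex, $\pi$ a Poincar\'e duality group, $W_\Gamma F=1$ by pseudofreeness), and you correctly translate Condition (H) along the free $\pi$-quotient into a statement about the boundary map for the $C_p$-Poincar\'e space $\udl{X}=\pi\backslash\classifyingspace{\finite}{\Gamma}$ with discrete fixed points. The problem is the last step, which you only gesture at: you propose to identify a ``local equivariant fundamental class'' at each isolated fixed point with the class of a free $C_p$-action on a homotopy $(d-1)$-sphere, i.e.\ an equivariant link. But in the Poincar\'e (purely homotopical) setting there is no tubular neighbourhood and no link of a fixed point --- that geometric argument is exactly the one rehearsed in the introduction for manifold models, and it is precisely what is unavailable here. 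Nothing in your sketch rules out that the boundary of the equivariant fundamental class lands on $p$ times a generator, or zero, in $H_{d-1}(BC_p)\cong\bbZ/p$; establishing that it is a generator is the actual content of the theorem, not bookkeeping.

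The paper supplies this step by a genuinely different mechanism, namely \cref{prop:property_H} on the (linearised) gluing class of \cite[\S 4.5]{HKK_PD}. The boundary map in question factors through the Tate construction
$(\uniquemap{X^{>1}}_!\varepsilon^*D_{\udl{X}}\otimes\bbZ)^{tC_p}\to\Sigma(\uniquemap{X^{>1}}_!\varepsilon^*D_{\udl{X}}\otimes\bbZ)_{hC_p}$,
which is an isomorphism on $\pi_0$ because the fixed points are discrete and $D_{\udl{X}}(y)$ is an invertible $C_p$-spectrum with geometric dimension $d^f=0$; this uses the homological classification of invertible $C_p$-spectra (\cref{rec:invertible_Cp_spectra_Z}, drawing on \cite{krause2020picard}). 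One then routes the collapse map through geometric fixed points (\cref{obs:commuting_square_of_geometric_fixed_points}, together with $\Phi^{C_p}D_{\udl{X}}\simeq D_{X^{C_p}}$ from \cref{thm:HKK_Thm.4.2.9}) to identify the resulting class, summand by summand, with the diagonal $\bbZ\to\bigoplus_{X^{C_p}}\bbZ$ followed by reduction mod $p$, whence a generator of $\bbZ/p$ in each component. Unless you import this gluing-class computation (or an equivalent replacement for your ``link''), your argument has a gap at its decisive point.
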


A proof of this, and the more general \cref{prop:property_H}, will be given in \cref{sec:property_H}. Taking this for granted for the moment, we may now provide the proof of \cref{thm:existence_nielsen_realisation}.

\begin{proof}[Proof of \cref{thm:existence_nielsen_realisation}]
    This is now a direct consequence of \cite[Theorem 1.4]{davis2024nielsen}.
    The group theoretic assumptions therein are satisfied by \cref{lem:decompositon_of_fixed_points}, whereas Condition (H) is shown to hold in \cref{thm:condition_H_automatic}.
\end{proof}

\subsection*{Structural overview}

We recall in \cref{section:recollections}  some notions and constructions from the theory of equivariant Poincar\'e duality  \cite{HKK_PD} that will be pertinent to our purposes. Next, in \cref{section:PD_for_C_p}, we work towards proving \cref{mainthm:recognition}, and for this, it will be necessary to develop some theory on compact objects in $C_p$--genuine spectra. This we do in \cref{sec:cellular_manouvres,sec:compact_cp_spectra_characterisation}, which might be of independent interest. Having set up the requisite basic theory, we return to the problem at hand and define the notion of genuine virtual Poincar\'e duality groups in \cref{section:gvpd_groups}, refining the classical notion of virtual Poincar\'e duality groups. Therein, we will also prove a characterisation result tailored to our needs. Finally, we put together all the elements and prove \cref{thm:genuine_virtual_duality_recognition,thm:condition_H_automatic} in \cref{section:application_nielsel_realisation}.

\subsection*{Conventions}
This paper is written in the language of $\infty$--categories as set down in \cite{lurieHTT,lurieHA}, and so by a \textit{category} we will always mean an $\infty$--category unless stated otherwise.

\subsection*{Acknowledgements}

We are grateful to Wolfgang
L\"uck and Shmuel Weinberger for numerous helpful conversations and encouragements on
this project. All three authors are supported by the Max Planck Institute for Mathematics in
Bonn. The second and third authors write this article as part of their PhD-thesis. The third author would like to thank the University of Toronto for its hospitality where parts of this article were written. 

\section{Recollections}\label{section:recollections}

There will be two types of equivariance in this paper, each playing a distinct role. The first kind will be defined for an arbitrary Lie group, which is covered in \cref{subsection:equivariance_spaces};  the second kind, covered in \cref{subsection:equivariant_stable_homotopy_theory}, will be defined only for finite groups (in fact, it is defined more generally for compact Lie groups as in \cite{HKK_PD}) and is the one that supports stable homotopy theory and the theory of equivariant Poincar\'e duality in \cref{subsection:equivariant_poincare_duality}. More details on  the materials in \cref{subsection:equivariant_stable_homotopy_theory,subsection:equivariant_poincare_duality} together with  references to the  original sources may be found in \cite{HKK_PD}.

\subsection{Equivariant spaces}\label{subsection:equivariance_spaces}\label{sec:singular_part}
Throughout, let $\Gamma$ be a Lie group.
\begin{nota}
    Let $\orbit(\Gamma)$ be the topological category of homogeneous $\Gamma$--spaces, the full topological subcategory of the category of topological $\Gamma$-spaces
    on objects isomorphic to $\Gamma/H$ for closed subgroups $H \leq \Gamma$.
\end{nota}

\begin{defn}
    The category $\spc_{\Gamma}$ of \textit{ $\Gamma$--spaces }  is defined as the category of presheaves $\presheaf(\orbit(\Gamma))\coloneqq\func(\orbit(\Gamma)\op,\spc)$.
\end{defn}

\begin{cons}[Fundamental adjunctions]\label{cons:fundmanetal_adjunctions}
    Genuine equivariant spaces participate in many adjunctions, the fundamental one that we will need being the following: let $\alpha\colon K\rightarrow \Gamma$ be a continuous homomorphism of Lie groups.
    By left Kan extension and restriction along the (opposite) induction functor $\ind^{\orbit}_{\alpha} \colon \orbit(K)\op \to \orbit(\Gamma)\op$, we obtain the adjunction
    \begin{center}
        \begin{tikzcd}
            \ind_{\alpha}\coloneqq (\ind^{\orbit}_{\alpha})_!\colon \spc_K \ar[r, shift left = 1] & \spc_{\Gamma} \ar[l, shift left = 1] \cocolon \res_{\alpha}\coloneqq (\ind^{\orbit}_{\alpha})^*.
        \end{tikzcd}
    \end{center}
    Specialising to the two cases of  $\alpha=\iota\colon H\rightarrowtail \Gamma$ being a closed subgroup and $\alpha=\theta\colon \Gamma \twoheadrightarrow Q$ being a continuous  surjection of Lie groups with kernel $N$, the adjunction above yields the following two adjunctions which we have given special notations
    \begin{center}
        \begin{tikzcd}
            \ind^{\Gamma}_H\coloneqq \ind_{\iota}\colon \spc_H \ar[r, shift left = 1] & \spc_{\Gamma} \ar[l, shift left = 1] \cocolon \res^{\Gamma}_H\coloneqq \res_{\iota} 
        \end{tikzcd}
    \end{center}
    \begin{center}
        \begin{tikzcd}
            N\backslash(-)\coloneqq \ind_{\theta}\colon \spc_{\Gamma} \ar[r, shift left = 1] & \spc_{Q} \ar[l, shift left = 1] \cocolon \inflated^Q_{\Gamma}\coloneqq \res_{\theta}.  
        \end{tikzcd}
    \end{center}
    Importantly, in the special case of a continuous surjection $\theta\colon \Gamma\twoheadrightarrow Q$, we have an adjunction $\res^{\orbit}_{\theta}\colon \orbit(Q)\op\rightleftharpoons \orbit(\Gamma)\op \cocolon \ind^{\orbit}_{\theta}$, and so $\infl^Q_{\Gamma}=(\ind^{\orbit}_{\theta})^*\simeq (\res^{\orbit}_{\theta})_!$.
\end{cons}

\begin{obs}\label{obs:composition_of_inductions}
    In particular, suppose we have  homomorphisms of Lie groups $\iota\colon N\rightarrowtail \Gamma$ and $\theta\colon \Gamma\twoheadrightarrow Q$ which are injective and surjective, respectively, and such that the composite $\theta\circ \iota\colon N \rightarrow Q$ is also surjective. Writing $\pi$ for the kernel of $\theta$, we thus see that $\ker(\theta\circ\iota)= N\cap \pi$. Since for composable homomorphisms of Lie groups $\alpha$ and $\beta$ we have $\ind_{\alpha\circ \beta}\simeq \ind_{\alpha}\circ\ind_{\beta}$, we see that in this case, there is a natural equivalence of functors $\spc_N\rightarrow \spc_Q$
    \[\pi\backslash\ind^{\Gamma}_N(-)\simeq (N\cap\pi)\backslash(-).\]
\end{obs}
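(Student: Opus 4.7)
The plan is to unwind the definitions from \cref{cons:fundmanetal_adjunctions} and then invoke the functoriality of induction that is already asserted in the statement. Both sides of the claimed equivalence can be rewritten as instances of the general construction $\ind_{(-)}$. By definition, $\ind^{\Gamma}_N = \ind_\iota$ and $\pi\backslash(-) = \ind_\theta$, so the left-hand side is the composite $\ind_\theta \circ \ind_\iota$. For the right-hand side, since $\theta\circ\iota\colon N\twoheadrightarrow Q$ is surjective by assumption with kernel $\iota^{-1}(\pi) = N\cap\pi$, the quotient functor $(N\cap\pi)\backslash(-)$ is by definition $\ind_{\theta\circ\iota}$.

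With these identifications, the asserted natural equivalence is nothing other than the functoriality statement $\ind_{\theta\circ\iota}\simeq \ind_{\theta}\circ \ind_{\iota}$, which is the instance at $(\alpha,\beta) = (\theta,\iota)$ of the general relation $\ind_{\alpha\circ\beta}\simeq \ind_\alpha\circ\ind_\beta$ highlighted in the statement itself. Should one wish to justify that general relation rather than take it as given, the argument is formal: the construction $\ind^{\orbit}_{(-)}\colon \orbit(-)\op\to\orbit(-)\op$ is a strict functor in the Lie group variable, so $\ind^{\orbit}_{\alpha\circ\beta} = \ind^{\orbit}_\alpha\circ\ind^{\orbit}_\beta$ on the nose; restriction $\res_{(-)} = (\ind^{\orbit}_{(-)})^*$ therefore satisfies $\res_{\alpha\circ\beta}\simeq\res_\beta\circ\res_\alpha$, and passing to left adjoints produces the desired composition equivalence for $\ind_{(-)}$.

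I do not anticipate any substantive obstacle: the observation is a bookkeeping lemma whose proof is entirely formal once the functors have been named correctly. Its value lies in packaging the identification for later use, where one will need to commute a quotient past an induction while tracking how kernels intersect with subgroups—a manipulation that will recur in combining the various adjunctions of \cref{cons:fundmanetal_adjunctions}.
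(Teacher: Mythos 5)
Your proposal is correct and matches the paper's own (inline) reasoning: both identify $\pi\backslash(-)=\ind_\theta$, $\ind^\Gamma_N=\ind_\iota$, and $(N\cap\pi)\backslash(-)=\ind_{\theta\circ\iota}$ and then apply the composition property $\ind_{\alpha\circ\beta}\simeq\ind_\alpha\circ\ind_\beta$, which the paper simply asserts and you additionally justify via functoriality of $\ind^{\orbit}$ and passage to left adjoints.
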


\begin{cons}[Singular parts]\label{cons:singular_part}
    Denote by $s\colon \singorbit(\Gamma) \subseteq \orbit(\Gamma)$ the full subcategory on the orbits $\Gamma/H$ with $H$ nontrivial. We then get the adjunction  
    \begin{center}
        \begin{tikzcd}
            \presheaf(\singorbit(\Gamma)) \rar[shift left = 1.5, "s_!" description] & \presheaf(\orbit(\Gamma)) = \spc_{\Gamma} \lar[shift left = 1.5, "s^*" description]
        \end{tikzcd}
    \end{center}
    by restricting and left Kan extending along $s$. We abbreviate $\singpart{(-)} = s_!s^*$, writing $\varepsilon \colon \singpart{\udl{X}} \rightarrow \udl{X}$ for the adjunction counit. Note that for an orbit $\myuline{\Gamma/H} \in \spc_{\Gamma}$ we have $\singpart{\myuline{\Gamma/H}}\simeq \emptyset$ if $H=e$ and $\varepsilon\colon\singpart{\myuline{\Gamma/H}}\rightarrow \myuline{\Gamma/H}$ is an equivalence otherwise.
    We refer to $\singpart{\udl{X}}$ as the \textit{singular part of $\udl{X}$} and think of  $\varepsilon \colon \singpart{\udl{X}} \rightarrow \udl{X}$ as the inclusion of all points with nontrivial isotropy.
\end{cons}

\begin{nota}
    In the special case of $\Gamma=C_p$, we have $\singorbit(C_p) = \{C_p/C_p\}\simeq \ast$, so that $\presheaf(\singorbit(C_p))\simeq \spc$. In this case, one can work out that $s_!$ just assigns a space to the constant diagram as an object in $\spc_{C_p}$. Moreover, $s^*\udl{X}=X^{C_p}$, and so we will also write $\singpart{\udl{X}}\in\spc_{C_p}$ as $\udl{X^{C_p}}$ interchangeably in this case.
\end{nota}

\begin{fact}
    \label{fact:quotients_and_inflation}
    Let $\Gamma$ and $Q$ be groups, $N \leq \Gamma$ a normal subgroup and $p \colon \Gamma \rightarrow Q$ a surjective group homomorphism. 
    If $Q$ acts on the topological space $X$, then there is a natural $\Gamma/N$-equivariant homeomorphism
    \[N \backslash X \cong p(N) \backslash X.  \]
    Here, $X$ is considered as a $\Gamma$-space via $p$, and $p(N) \backslash X$ is considered as a $\Gamma/N$-space via $\Gamma/N \rightarrow Q/p(N)$. Specialising to orbit categories, we obtain the commutative diagram
    \begin{equation*}
        \begin{tikzcd}
            \orbit(Q) \ar[r, "\res^\orbit"] \ar[d, "\ind^\orbit"] & \orbit(\Gamma) \ar[d, "\ind^\orbit"] \\
        \orbit(Q/p(N)) \ar[r, "\res^\orbit"] & \orbit(\Gamma/N).
        \end{tikzcd}
    \end{equation*}
    Applying $\presheaf(-)$ with the $(-)_!$ functoriality, we get an equivalence of functors
    \[ N \backslash \infl_{\Gamma}^Q(-) \simeq \infl_{\Gamma/N}^{Q/p(N)} p(N) \backslash(-) \colon \spc_{Q} \rightarrow \spc_{\Gamma/N}. \]
\end{fact}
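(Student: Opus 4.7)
The plan is to reduce everything to the observation that, when $X$ is a $Q$-space viewed as a $\Gamma$-space via $p$, the $N$-action on $X$ factors through its image $p(N)\leq Q$. This makes both the pointwise homeomorphism and the commutative orbit-category square almost tautological, and the functorial statement on presheaf categories then follows from the $2$-functoriality of $\presheaf(-)$.

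First I would verify the underlying set-level identification $N\backslash X = p(N)\backslash X$. The identity $n\cdot x = p(n)\cdot x$ for $n\in N$ and $x\in X$ shows that the two equivalence relations defining the quotients are literally identical, and the quotient topologies agree. Next, for $\Gamma/N$-equivariance, the induced homomorphism $\bar p \colon \Gamma/N \to Q/p(N)$ is well-defined and surjective, and under the above identification both actions send $[\gamma]\cdot[x]$ to $[p(\gamma)\cdot x]$, so they coincide. Naturality in $X$ is automatic, as every map in sight is constructed directly from the ambient $\Gamma$-action.

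To obtain the orbit-category square, I would specialise the homeomorphism to $X = Q/H$ for closed $H\leq Q$ and verify that both routes around the diagram produce the $\Gamma/N$-orbit $\Gamma/Np^{-1}(H)$: the right-then-down route gives $Q/H \mapsto \Gamma/p^{-1}(H) \mapsto \Gamma/Np^{-1}(H)$, and the down-then-right route gives $Q/H \mapsto Q/Hp(N) \mapsto (\Gamma/N)/\bar p^{-1}(Hp(N)/p(N))$; these agree via the elementary subgroup identity $p^{-1}(Hp(N)) = N\cdot p^{-1}(H)$, whose verification uses normality of $N$ in $\Gamma$ together with $\ker p \subseteq p^{-1}(H)$. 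Naturality in $H$ and in morphisms of orbits is immediate and yields the commuting diagram of $1$-categories.

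Finally, the equivalence of functors $\spc_Q \to \spc_{\Gamma/N}$ is obtained by applying the covariant $(-)_!$ functoriality of $\presheaf(-)$ to the commuting orbit-category square: by \cref{cons:fundmanetal_adjunctions}, left Kan extension along the restriction-of-orbits functor is precisely inflation, and left Kan extension along the induction-of-orbits functor is precisely the corresponding quotient functor. The only real obstacle throughout is careful notational bookkeeping; conceptually the entire Fact is a repackaging of the one-line observation that opened the plan.
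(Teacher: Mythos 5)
Your proposal is correct and takes essentially the route the paper intends (the statement is left as a Fact, with the argument implicit in its phrasing): check that the two quotient equivalence relations on $X$ literally coincide and are $\Gamma/N$-equivariantly compatible, specialise to orbits using the subgroup identity $p^{-1}(Hp(N)) = N\,p^{-1}(H)$ to get the commuting square, and then apply the $(-)_!$ functoriality of $\presheaf(-)$ together with the identifications of \cref{cons:fundmanetal_adjunctions}. Nothing essential is missing.
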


\begin{fact}
    \label{fact:singular_part_and_quotient}
    Given any discrete group $\Gamma$ and a $\Gamma$--space $\udl{X}$, as well as a \textit{proper} normal subgroup $N \subset \Gamma$ such that the $N$-action on $\udl{X}$ is free, the inclusion $\udl{X}^{>1} \rightarrow \udl{X}$ induces equivalences 
    \[  (N \backslash  \udl{X})^{>1} \xleftarrow{\simeq} (N \backslash  \udl{X}^{>1})^{>1} \xrightarrow{\simeq} N \backslash  \udl{X}^{>1}.\]
    Indeed, all functors involved commute with colimits, and the statement is clearly true for orbits, out of which every $\Gamma$--space may be built via colimits.
\end{fact}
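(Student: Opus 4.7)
The plan is to reduce to the case of orbits and verify the statement directly, exploiting that every functor appearing in the claim preserves colimits.

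First, I would note that $(-)^{>1} = s_! s^*$ is a composition of a restriction and a left Kan extension of presheaves (\cref{cons:singular_part}), and that $N \backslash (-)$ is a left adjoint by \cref{cons:fundmanetal_adjunctions}; consequently both the singular-part construction and the $N$-quotient preserve colimits in the argument $\udl{X}$. So it suffices to verify the claim on a generating family of $\Gamma$-spaces.

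Next, I would use the freeness hypothesis to cut down to orbits $\myuline{\Gamma/H}$ with $H \cap N = e$. Writing $\udl{X} \simeq \colim_{\myuline{\Gamma/H} \to \udl{X}} \myuline{\Gamma/H}$ as a canonical colimit of representables, any indexing map $\myuline{\Gamma/H} \to \udl{X}$ corresponds to a point of $\udl{X}^H \subseteq \udl{X}^{H \cap N}$; the latter is empty whenever $H \cap N \neq e$ by freeness of the $N$-action. Hence $\udl{X}$ is a colimit of orbits of the claimed form.

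Finally, I would verify the claim on such orbits. For $H = e$ all three spaces in the statement are empty, since $\myuline{\Gamma/e}^{>1} = \emptyset$ and $N \backslash \myuline{\Gamma/e} \simeq \myuline{\Gamma/N}$ has free $\Gamma/N$-action. For $H \neq e$ with $H \cap N = e$, the image $\overline H$ of $H$ in $\Gamma/N$ is a nontrivial subgroup, and \cref{obs:composition_of_inductions} identifies $N \backslash \myuline{\Gamma/H} \simeq \myuline{(\Gamma/N)/\overline H}$; since both $H$ and $\overline H$ are nontrivial, each of the three spaces in the claim coincides with $\myuline{(\Gamma/N)/\overline H}$, and the asserted maps are equivalences. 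The only subtle point in the argument is the orbit reduction: the statement genuinely fails for orbits $\myuline{\Gamma/H}$ with $e \neq H \subseteq N$, so the freeness hypothesis must be invoked precisely there rather than glossed over.
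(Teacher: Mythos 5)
Your proof is correct and follows essentially the same route as the paper's (deliberately terse) justification: both reduce via colimit-preservation of $(-)^{>1}=s_!s^*$ and $N\backslash(-)$ to the case of orbits, and your added precision—that freeness forces every orbit $\myuline{\Gamma/H}$ mapping to $\udl{X}$ to satisfy $H\cap N=e$, which is exactly where the hypothesis enters since the claim fails for orbits with $e\neq H\subseteq N$—is precisely what the paper leaves implicit in the phrase ``clearly true for orbits.'' One small nitpick: \cref{obs:composition_of_inductions} assumes the composite $H\to\Gamma/N$ is surjective, so it does not literally give $N\backslash\myuline{\Gamma/H}\simeq\myuline{(\Gamma/N)/\overline{H}}$ in general; but that identification is immediate from the description of $N\backslash(-)$ on orbits in \cref{cons:fundmanetal_adjunctions}, so this does not affect the argument.
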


\subsection{Genuine equivariant stable homotopy theory}\label{subsection:equivariant_stable_homotopy_theory}
Let $G$ be a finite group. The stable category \textit{ $\spectra_G$ of genuine $G$--spectra } is a refinement of the category $\spectra^{BG}$ of spectra with $G$--action with better formal properties. This is a refinement in that $\spectra^{BG}$ sits fully faithfully in the category $\spectra_G$, in fact in two different ways. One way to define $\spectra_G$, following Barwick, is as the category $\mackey_G(\spectra)\coloneqq \func^{\times}(\spancategory(\finite_G),\spectra)$ of $G$--Mackey functors in spectra. A good introduction to the materials in this subsection may be found, for instance, in \cite[Part 2]{MNN17}.

The category of genuine equivariant spectra is valuable as it is a particularly conducive environment for inductive methods enabled by many compatibility structures between these categories for different groups, expressed in terms of various adjunctions. Moreover, $\spectra_G$ should also be thought of as the ``universal category of equivariant homology theories'' on $\spc_G$. For example, there is a symmetric monoidal colimit--preserving functor ${\Sigma}^{\infty}_{+}\colon \spc_G\rightarrow\spectra_G$ which is the analogue of the suspension spectrum functor nonequivariant spectra, and $\spectra_G$ is generated under colimits by $\{{\Sigma}^{\infty}_+\myuline{G/H}\}_{H\leq G}$.

\begin{fact}[Restriction--(co)induction]
    For a subgroup $H\leq G$, we have the adjunctions
    \begin{center}
        \begin{tikzcd}
            \spectra_G \ar[rr, "\res^G_H" description]&& \spectra_H \ar[ll, "\ind^G_H"description, bend right = 30]\ar[ll, "\coind^G_H"description, bend left = 30]
        \end{tikzcd}
    \end{center}
    where moreover, there is a canonical equivalence of functors $\ind^G_H\simeq \coind^G_H$ classically known as the Wirthm\"uller isomorphism.
\end{fact}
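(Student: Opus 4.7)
The plan is to work within the Mackey-functor model $\spectra_G \simeq \mackey_G(\spectra)=\func^{\times}(\spancategory(\finite_G),\spectra)$ alluded to in the excerpt. In this description, $\res^G_H$ is implemented as precomposition with the product-preserving functor of span categories $\rho\colon \spancategory(\finite_H)\to \spancategory(\finite_G)$ obtained by applying $\spancategory(-)$ to the induction $G\times_H (-) \colon \finite_H \to \finite_G$. The functors $\ind^G_H$ and $\coind^G_H$ are then defined as the left and right Kan extensions along $\rho$; both adjunctions exist because $\rho$ is a functor between small semiadditive $\infty$-categories and $\spectra$ is bicomplete, so the pointwise formulas apply, and the semiadditivity of $\spancategory(\finite_G)$ ensures the resulting functors remain product-preserving, i.e.\ still land in Mackey functors.

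Next, I would produce the canonical Wirthm\"uller comparison map $\ind^G_H\to \coind^G_H$. The cleanest construction exploits the self-duality of every finite $G$-set: any $S\in \finite_G$ is its own dual in $\spancategory(\finite_G)$ via the span $S\xleftarrow{\id} S \xrightarrow{\id}S$. This self-duality, combined with the semiadditive $(\infty,1)$-categorical structure of $\spancategory(\finite_G)$, produces an ambidextrous norm transformation natural in both variables. Equivalently, one can build the norm via the unit of the $(\res^G_H,\coind^G_H)$-adjunction and the counit of the $(\ind^G_H, \res^G_H)$-adjunction, paired with the trace associated to the self-dual span.

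Finally, I would verify this norm map is an equivalence. Both sides are exact functors, and since $\spectra_H$ is compactly generated by $\{\Sigma^{\infty}_+ \myuline{H/K}\}_{K\leq H}$ (c.f.\ the excerpt), it suffices to check the equivalence on these generators. The Kan extension formula yields $\ind^G_H \Sigma^{\infty}_+ \myuline{H/K} \simeq \Sigma^{\infty}_+ \myuline{G/K}$ immediately by transitivity of induction in $\spancategory$. The corresponding computation for $\coind^G_H$ uses the pointwise right Kan extension formula, reducing via a double-coset decomposition of the comma category of $\rho$ over $G/K$ to the same value. I expect the main obstacle to be this right Kan extension computation: one must carefully track $2$-morphisms in the $(\infty,1)$-categorical span setting and invoke the semiadditivity of $\spancategory(\finite_G)$ to collapse the relevant limit to a finite biproduct, matching the colimit that computes induction. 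Once both computations are in place, the Wirthm\"uller equivalence follows formally from the naturality of the norm transformation and the universal property of compact generation.
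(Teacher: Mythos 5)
The paper does not prove this statement at all: it is recorded as a classical \emph{Fact}, with the surrounding material delegated to the literature (e.g.\ \cite[Part 2]{MNN17}), so your sketch has to stand entirely on its own. Its skeleton is a legitimate route: in the Mackey model $\spectra_G\simeq\func^{\times}(\spancategory(\finite_G),\spectra)$, restriction is indeed precomposition with $\rho=\spancategory(G\times_H-)$, its adjoints are the left and right Kan extensions (after checking, which you only assert, that these preserve product-preserving functors), and $\ind^G_H\Sigma^\infty_+\myuline{H/K}\simeq\Sigma^\infty_+\myuline{G/K}$ does follow since left Kan extension takes corepresentables to corepresentables. However, the two steps that actually constitute the Wirthm\"uller isomorphism are not carried out. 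First, the duality datum you invoke is wrong: an object $S$ of $\spancategory(\finite_G)$ is self-dual via the coevaluation and evaluation spans $\ast\leftarrow S\xrightarrow{\Delta} S\times S$ and $S\times S\xleftarrow{\Delta} S\rightarrow\ast$, not via the identity span (which is just $\id_S$), and producing from this a natural norm transformation $\ind^G_H\to\coind^G_H$ requires a genuine construction (the formal units and counits of the two adjunctions alone do not compose to such a map; one needs the extra wrong-way/transfer datum, e.g.\ the canonical summand inclusion $\id\to\res^G_H\ind^G_H$ from the double coset decomposition). Second, and more seriously, the decisive computation --- that $\coind^G_H$ sends $\Sigma^\infty_+\myuline{H/K}$ to $\Sigma^\infty_+\myuline{G/K}$ compatibly with the norm map --- is precisely the content of the theorem, and you explicitly defer it (``I expect the main obstacle to be this right Kan extension computation''). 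The limit over a comma category in an $(\infty,1)$-categorical span category is delicate and no double-coset argument is actually given. A smaller unaddressed point: reducing to compact generators presupposes that $\coind^G_H$ preserves colimits, which needs the separate (easy) observation that $\res^G_H$ preserves compact objects.

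A standard way to close the gap avoids all pointwise Kan extension computations: the adjunction $G\times_H(-)\dashv\res$ on finite sets, whose unit and counit squares satisfy the relevant base-change conditions, induces an adjunction $\spancategory(G\times_H-)\dashv\spancategory(\res)$ of span categories; since $\spancategory(\finite_G)\simeq\spancategory(\finite_G)\op$ canonically and compatibly with these functors, this adjunction dualises to $\spancategory(\res)\dashv\spancategory(G\times_H-)$. Precomposing, both the left and the right adjoint of $\res^G_H$ on Mackey functors are given by precomposition with $\spancategory(\res)$, so $\ind^G_H\simeq\coind^G_H$ for formal reasons, and the values on orbits can then be read off. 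This is the argument implicit in the sources the paper cites, and I would recommend either carrying it out or genuinely completing the right Kan extension computation you flagged.
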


\begin{fact}[Genuine fixed points]\label{fact:genuine_fixed_points}
    There is a functor $(-)^G\colon \spectra_G\rightarrow \spectra$  called the \textit{genuine fixed points  functor} which, from the Mackey functors perspective, is given by evaluating at $G/G\in\finite_G$. This participates in an adjunction 
    \begin{center}
        \begin{tikzcd}
            \inflated^e_G\colon \spectra \ar[rr, shift left = 1] && \spectra_G \ar[ll, shift left = 1] \cocolon (-)^G
        \end{tikzcd}
    \end{center}
    where $\inflated^e_G$  preserves compact objects and is the unique symmetric monoidal colimit preserving functor from $\spectra$ to $\spectra_G$. For every subgroup $H\leq G$, we may also define the genuine $H$--fixed points functor $(-)^H$ as the composite $\spectra_G\xrightarrow{\res^G_H}\spectra_H\xrightarrow{(-)^H}\spectra$.
\end{fact}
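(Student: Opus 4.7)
The plan is to unpack the Mackey functor description $\spectra_G = \mackey_G(\spectra) = \func^{\times}(\spancategory(\finite_G), \spectra)$ and verify each claim in order. First, $(-)^G$ is defined as evaluation at the object $G/G \in \spancategory(\finite_G)$. Since Mackey functors are characterized by sending biproducts in $\spancategory(\finite_G)$ to biproducts in $\spectra$, a condition stable under pointwise limits and colimits, the inclusion $\mackey_G(\spectra) \hookrightarrow \func(\spancategory(\finite_G), \spectra)$ preserves all colimits (and limits). Hence colimits of Mackey functors are computed pointwise, and evaluation at $G/G$ is a limit- and colimit-preserving functor between presentable stable categories. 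The adjoint functor theorem then produces the left adjoint $\inflated^e_G$.

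For compactness preservation, I would note that $(-)^G$ commutes with filtered colimits by the pointwise description of colimits, and the standard characterization of compact objects (mapping out of them commutes with filtered colimits) then guarantees that its left adjoint $\inflated^e_G$ preserves compact objects. For uniqueness as a symmetric monoidal colimit-preserving functor, I would invoke the universal property of $\spectra$ as the initial object of $\calg(\presentable^{\stable})$, due to Lurie \cite{lurieHA}: this forces at most one such functor $\spectra \to \spectra_G$, which must then agree with $\inflated^e_G$ provided we know the latter is itself strong symmetric monoidal.

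The main obstacle is this last point, namely verifying strong symmetric monoidality of $\inflated^e_G$. This is equivalent to the projection formula for the adjunction $\inflated^e_G \dashv (-)^G$, which can be extracted from the Day convolution presentation of the symmetric monoidal structure on $\mackey_G(\spectra)$, under which the left adjoint is determined by sending the sphere spectrum $\sphere$ to the unit $\sphere_G$. Finally, the definition of $(-)^H$ for $H \leq G$ as the composite $\spectra_G \xrightarrow{\res^G_H} \spectra_H \xrightarrow{(-)^H} \spectra$ is purely formal once the restriction functors from the preceding fact are in hand. All the substantive background inputs — the Mackey-functor symmetric monoidal structure, the Wirthm\"uller equivalence, and the projection formula — I would treat as standard black boxes, referencing \cite{MNN17}.
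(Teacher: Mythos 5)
This statement is a \emph{Fact} in the paper: it is recalled without proof, with the reader pointed to the standard literature (e.g.\ \cite[Part 2]{MNN17}), so there is no argument in the paper to match; the question is whether your verification from the Mackey-functor model is sound. Most of it is. Colimits in $\mackey_G(\spectra)=\func^{\times}(\spancategory(\finite_G),\spectra)$ are indeed computed pointwise (finite products in a stable target are biproducts, hence commute with all pointwise limits and colimits), so evaluation at $G/G$ preserves limits and colimits, the adjoint functor theorem produces $\inflated^e_G$, and compactness preservation follows because the right adjoint $(-)^G$ preserves (filtered) colimits. The definition of $(-)^H$ as $\res^G_H$ followed by $(-)^H$ is purely formal, as you say.

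The genuine gap is exactly the step you flag as the main obstacle: the claim that strong symmetric monoidality of $\inflated^e_G$ is \emph{equivalent} to the projection formula for $\inflated^e_G \dashv (-)^G$. This is not a correct principle: for instance the trivial-action functor $\spectra\to\spectra^{BG}$ is strong symmetric monoidal, yet the projection formula for its adjunction with $(-)^{hG}$ fails (homotopy fixed points do not commute with infinite coproducts); conversely, knowing the projection formula does not make the oplax structure maps $\inflated^e_G(A\otimes B)\to \inflated^e_G A\otimes\inflated^e_G B$ into equivalences. (The projection formula does happen to hold in the present case, but by a colimit argument, and it is not the input you need.) The repair is close to what you already wrote: first identify $\inflated^e_G(\sphere)\simeq\sphere_G$, which follows by Yoneda since evaluation at $G/G$ is corepresented by the spectral Yoneda image $\Sigma^\infty_+\myuline{G/G}\simeq\sphere_G$; then use that $\spectra$ is the free presentable stable category on one generator, so a colimit-preserving functor out of $\spectra$ is determined by its value on $\sphere$, together with the initiality of $\spectra$ in $\calg(\presentable^{\stable})$, which supplies a symmetric monoidal colimit-preserving functor $\spectra\to\spectra_G$ carrying the unit to the unit. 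Its underlying functor must therefore agree with $\inflated^e_G$, which both equips $\inflated^e_G$ with a symmetric monoidal structure and identifies it as the unique symmetric monoidal colimit-preserving functor, with no appeal to a projection formula.
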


\begin{fact}[Borel fixed points]\label{fact:borel_fixed_points}
    There is a standard Bousfield (co)localisation
    \begin{center}
        \begin{tikzcd}
            \spectra_{G}\ar[rr,"\beta^*"description, two heads] && \spectra^{BG}\ar[ll,bend right = 22, "\beta_!"' description, hook]\ar[ll,bend left = 22, "\beta_*"' description, hook]
        \end{tikzcd}
    \end{center}
    where $\beta^*X\simeq X^e$, $\beta_!\beta^*X\simeq EG_+\otimes X$, and $\beta_*\beta^*X\simeq F(EG_+,X)$. This well--known pair of adjunctions may for example be worked out from combining \cite[Thm. 3.9, Prop. 6.5, Prop. 6.6, Prop. 6.17]{MNN17}. Under the Mackey functors perspective, $\beta^*$ is given by evaluating at $G/e\in\finite_G$. In particular, we see that $\spectra^{BG}$ embeds into $\spectra_G$ in two different ways, as mentioned above. Via the functor $\beta^*$ as well as the homotopy orbits $(-)_{hG}$, homotopy fixed points $(-)^{hG}$, and Tate fixed points $(-)^{tG}$ functors $\spectra^{BG}\rightarrow \spectra$, we may also obtain the functors $(-)_{hG}, (-)^{hG}, (-)^{tG}\colon \spectra_G\rightarrow\spectra$ which also fit in a fibre sequence of functors $(-)_{hG}\rightarrow (-)^{hG}\rightarrow (-)^{tG}$. In particular, these functors only depend on the underlying spectrum with $G$--action.
\end{fact}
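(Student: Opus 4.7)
The plan is to construct the two Bousfield (co)localisations explicitly via the Mackey-functor model, by identifying $\spectra^{BG}$ with a suitable full subcategory of $\spectra_G$ and then working out both adjoints. First I would let $\beta_! \colon \spectra^{BG} \hookrightarrow \spectra_G$ denote the inclusion of the stable subcategory generated under colimits by the free orbit $\Sigma^{\infty}_+ \myuline{G/e}$. That this subcategory is indeed equivalent to $\spectra^{BG}$ follows from the Mackey-functor perspective: evaluating a Mackey functor at $G/e \in \finite_G$ yields a functor $\spectra_G \to \spectra^{BG}$ (since the endomorphisms of $G/e$ in $\spancategory(\finite_G)$ recover the group $G$), and the endomorphism spectrum of $\Sigma^{\infty}_+ \myuline{G/e}$ is precisely the group ring $\mathbb{S}[G]$, so the subcategory generated by this single compact projective is $\lmodule_{\mathbb{S}[G]} \simeq \spectra^{BG}$. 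This evaluation functor is my candidate for $\beta^*$, and the formula $\beta^*X \simeq X^e$ is immediate.

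Next I would identify the counit $\beta_!\beta^*X \to X$ with $EG_+ \otimes X \to X$. For this I would argue that (i) $EG_+ \otimes X$ lies in the essential image of $\beta_!$, because $EG$ admits a free $G$-CW structure and smashing with free $G$-cells preserves the stable subcategory generated by $\Sigma^{\infty}_+\myuline{G/e}$; and (ii) the map $EG_+ \otimes X \to X$ becomes an equivalence after applying $\beta^*$, since $EG$ is nonequivariantly contractible, so its underlying spectrum is $\mathbb{S}$. These two properties together characterise the counit of the adjunction $\beta_! \dashv \beta^*$ by the universal property, forcing the identification.

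For the second adjunction, I would note that $\beta^*$ is evaluation at $G/e$ in the Mackey-functor model and hence colimit-preserving, so by the adjoint functor theorem it admits its own right adjoint $\beta_*$; fully faithfulness of $\beta_*$ amounts to the unit $Y \to \beta^*\beta_* Y$ being an equivalence, which is formal from fully faithfulness of $\beta_!$. The identification $\beta_*\beta^*X \simeq F(EG_+, X)$ then follows by dualising the previous step: the functor $F(EG_+, -)$ takes values in the Borel-complete subcategory (via the $\otimes$-$F$ adjunction applied to step two), and the unit map becomes an equivalence on underlying spectra since $EG$ is nonequivariantly contractible. The claims about $(-)_{hG}$, $(-)^{hG}$, $(-)^{tG}$ as functors $\spectra_G \to \spectra$ then follow formally: one defines these first on $\spectra^{BG}$ in the standard way, and the composites $(-)^G \circ \beta_!\beta^*$ and $(-)^G \circ \beta_*\beta^*$ recover $X_{hG} = (EG_+ \otimes X)^G$ and $X^{hG} = F(EG_+, X)^G$ respectively, with the Tate construction their cofibre.

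The main subtlety I anticipate is step two, verifying that smashing with $EG_+$ really lands in the subcategory generated by free orbits. This needs a compatibility between the $\spectra_G$-tensoring over $\spc_G$ (or equivalently the smash product) and the stable generation condition, together with the universal free-$G$-CW property of $EG$. Once this geometric step is in place, the rest of the proof is formal, running on adjoint functor theorems and dualisation, which is likely why the statement is cited directly to the four items of \cite{MNN17} rather than reproved here.
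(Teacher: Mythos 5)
Your sketch is correct and reproduces the standard derivation that the paper defers to \cite{MNN17}: the paper itself does not reprove this fact but cites it, so there is no internal proof to compare against. Your identification of $\spectra^{BG}$ with the localising subcategory of $\spectra_G$ generated by $\Sigma^\infty_+\myuline{G/e}$ via Schwede--Shipley/Lurie recognition (endomorphism ring $\mathbb{S}[G]$), the characterisation of the counit via the free $G$-CW structure on $EG$, and the dualisation step $EG_+\otimes EG_+\simeq EG_+$ to get Borel-completeness of $F(EG_+,-)$ are all exactly the right ingredients, and the Adams isomorphism $(EG_+\otimes X)^G\simeq X_{hG}$ for finite $G$ completes the identification of the three Borel-theoretic functors. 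Two small points of precision: the phrase ``the endomorphisms of $G/e$ in $\spancategory(\finite_G)$ recover the group $G$'' is loose — the full endomorphism spectrum is $\mathbb{S}[G]$ via group-completion of the groupoid of spans over $G/e\times G/e$, and what makes $\beta^*$ land in $\spectra^{BG}$ is that $G=\mathrm{Aut}_{\finite_G}(G/e)$ sits inside this endomorphism ring; and ``compact projective'' should just read ``compact generator'' in the stable setting. Neither affects the substance of the argument.
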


\begin{fact}[Geometric fixed points]
    There is a symmetric monoidal colimit--preserving functor $\Phi^G(-)\colon \spectra_G\rightarrow \spectra$ called the \textit{geometric fixed points} which is uniquely characterised by sending ${\Sigma}^{\infty}_+\myuline{G/H}$ to $0$ when $H\lneq G$ and to $\sphere$ when $H=G$. For a subgroup $H\leq G$, we may also define $\Phi^H$ as the functor $\spectra_G\xrightarrow{\res^G_H}\spectra_H\xrightarrow{\Phi^H}\spectra$. The collection of functors $\Phi^H\colon \spectra_G\rightarrow \spectra$ for all $H\leq G$ is jointly conservative.
    
    The geometric fixed points functor participates in an adjunction 
    \begin{center}
        \begin{tikzcd}
            \Phi^G\colon \spectra_G \ar[rr, shift left = 1] && \spectra \ar[ll, shift left = 1, hook] \cocolon \Xi^G
        \end{tikzcd}
    \end{center}
    where $\Xi^G$ is fully faithful. For $E\in\spectra$, $\Xi^GE\in\spectra_G$ is concretely given by the $G$--Mackey functor which assigns $E$ to $G/G$ and $0$ to all $G/H$ for $H\lneq G$. 

    Furthermore, using that $\spectra$ is the initial presentably symmetric monoidal stable category, it is also not hard to see that $\Phi^G\inflated^e_G\simeq \id_{\spectra}$.
\end{fact}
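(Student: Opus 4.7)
The plan is to establish this Fact piece by piece by leveraging the Mackey functor model $\spectra_G \simeq \mackey_G(\spectra)$ and the fact that $\spectra_G$ is presentable stable and generated under colimits by the orbit suspension spectra $\{\Sigma^{\infty}_+ \udl{G/H}\}_{H \leq G}$. First I would construct $\Phi^G$ via the standard isotropy separation recipe: with $\mathcal{P}$ the family of proper subgroups of $G$, set $\Phi^G(X) \coloneqq (X \otimes \widetilde{E}\mathcal{P})^G$, where $\widetilde{E}\mathcal{P} \in \spectra_G$ is the cofibre of the canonical map $\Sigma^{\infty}_+ E\mathcal{P} \to \sphere$. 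Smashing with $\widetilde{E}\mathcal{P}$ is a symmetric monoidal Bousfield localisation after which genuine $G$-fixed points become strong symmetric monoidal and colimit-preserving, so $\Phi^G$ inherits both properties. A direct computation on the generating orbits yields the stated values: $\Sigma^{\infty}_+ \udl{G/H} \otimes \widetilde{E}\mathcal{P}$ is contractible for $H \lneq G$ (since $H$-fixed points of $\widetilde{E}\mathcal{P}$ vanish), while for $H = G$ the smash product reproduces $\widetilde{E}\mathcal{P}$, whose genuine $G$-fixed points are $\sphere$. Uniqueness then follows because any colimit-preserving symmetric monoidal $F \colon \spectra_G \to \spectra$ agreeing with $\Phi^G$ on the generators must agree with it everywhere.

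Next, I would produce the right adjoint $\Xi^G$ via the presentable adjoint functor theorem applied to the colimit-preserving functor $\Phi^G$ between presentable categories. From the Mackey functor perspective, the adjunction identifies $(\Xi^G E)(G/H) \simeq \map_{\spectra}(\Phi^G \Sigma^{\infty}_+ \udl{G/H}, E)$, which by the computation above equals $E$ when $H = G$ and $0$ when $H \lneq G$, yielding the concrete description. Fully faithfulness of $\Xi^G$ then reduces to verifying that the counit $\Phi^G \Xi^G \to \id_{\spectra}$ is an equivalence, which can be read off directly from the Mackey description or, equivalently, from the observation that the canonical natural transformation $\Phi^G \Xi^G E \to E$ must be an equivalence at the generator $\sphere \in \spectra$ and hence on all of $\spectra$ by colimit-preservation on both sides.

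For the joint conservativity of $\{\Phi^H\}_{H \leq G}$, I would induct on $|G|$. Given $X \in \spectra_G$ with $\Phi^H X \simeq 0$ for all $H \leq G$, I would use the isotropy separation cofibre sequence $(E\mathcal{P}_+ \otimes X) \to X \to (\widetilde{E}\mathcal{P} \otimes X)$. The right term has $\Phi^G$ equal to $0$ by hypothesis and vanishes after applying the faithful $\Phi^G$ restricted to $\widetilde{E}\mathcal{P}$-local objects, while the left term is built from induced cells $\ind^G_H Y$ for $H \lneq G$, on which the inductive hypothesis applied to the $H$-spectra $\res^G_H X$ forces vanishing. Finally, for $\Phi^G \inflated^e_G \simeq \id_{\spectra}$, observe that both $\inflated^e_G$ and $\Phi^G$ are symmetric monoidal and colimit-preserving, so the composite is a symmetric monoidal colimit-preserving endofunctor of $\spectra$; by the universal property of $\spectra$ as the initial presentably symmetric monoidal stable category any such endofunctor is canonically equivalent to $\id_{\spectra}$.

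I expect the main obstacle to be a fully rigorous treatment of the joint conservativity step, since the inductive argument requires careful bookkeeping of the interaction between restriction, genuine fixed points, and the isotropy separation sequence across proper subgroups; this is classical folklore but is never entirely routine to write down. Everything else can be organised cleanly once one commits to either the isotropy-separation definition of $\Phi^G$ or its universal characterisation as the colimit-preserving symmetric monoidal extension of its values on orbits.
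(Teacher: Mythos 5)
The paper does not actually prove this statement: it is recalled as a standard Fact, with the relevant constructions and proofs deferred to the literature (e.g.\ \cite[Part 2]{MNN17}), so there is no in-paper argument to compare against. Your isotropy-separation sketch --- define $\Phi^G = (\widetilde{E}\mathcal{P}\otimes -)^G$ for $\mathcal{P}$ the proper subgroups, check the values on orbits via $\Sigma^\infty_+\udl{G/H}\otimes X \simeq \ind^G_H\res^G_H X$, get $\Xi^G$ from the adjoint functor theorem and read off its Mackey values by corepresentability, and prove joint conservativity by induction on $|G|$ using the sequence $E\mathcal{P}_+\otimes X \to X \to \widetilde{E}\mathcal{P}\otimes X$ --- is exactly the standard treatment in those sources, and the outline is correct.

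One step is argued too loosely to stand as written: the uniqueness clause. ``Any colimit-preserving symmetric monoidal $F$ agreeing with $\Phi^G$ on the generators must agree with it everywhere'' is not a valid inference, since objectwise agreement on a set of generators does not determine a functor up to equivalence. The correct argument uses the monoidality more seriously: since $F(\Sigma^\infty_+\udl{G/H})\simeq 0$ for all proper $H$ and $\Sigma^\infty_+\udl{G/H}\otimes X\simeq \ind^G_H\res^G_H X$, the functor $F$ kills $E\mathcal{P}_+\otimes X$ for every $X$, hence inverts the maps $X\to\widetilde{E}\mathcal{P}\otimes X$ and factors uniquely through the smashing localisation onto the $\widetilde{E}\mathcal{P}$-local subcategory; that subcategory is compactly generated by its unit, whose endomorphism spectrum is $(\widetilde{E}\mathcal{P})^G\simeq\sphere$, so it is $\spectra$, and the induced symmetric monoidal colimit-preserving endofunctor of $\spectra$ is the identity by the same initiality of $\spectra$ that you invoke for $\Phi^G\inflated^e_G\simeq\id$. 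You already have all of these ingredients on the page, so this is a repair rather than a new idea. A smaller point of the same kind: your second argument for the counit $\Phi^G\Xi^G\to\id$ being an equivalence uses that $\Xi^G$ preserves colimits, which is true but needs a word (the essential image of $\Xi^G$ consists of the Mackey functors vanishing on all proper orbits, and since evaluation at each orbit preserves colimits this subcategory is closed under colimits); your first route, reading the counit off from the Mackey description of $\Xi^G E$ together with $\Phi^G\simeq(-)^G$ on $\widetilde{E}\mathcal{P}$-local objects, avoids the issue entirely.
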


Next, we recall the standard decomposition in the special case of genuine $C_p$--spectra, which is all that we will need in our work.

\begin{fact}[$C_p$--stable recollement]\label{recollect:borelification_(co)localisation}
    Let $G=C_p$. In this case, some of the adjunctions we have seen fit into a stable recollement (also called split Verdier sequence)
    \begin{center}
        \begin{tikzcd}
            \spectra\ar[rr,"\Xi^{C_p}"description,hook] &&\spectra_{C_p}\ar[rr,"\beta^*"description, two heads] \ar[ll,bend right = 25, "\Phi^{C_p}"' description, two heads]\ar[ll,bend left = 25, two heads] && \spectra^{BC_p}\ar[ll,bend right = 22, "\beta_!"' description, hook]\ar[ll,bend left = 22, "\beta_*"' description, hook]
        \end{tikzcd}
    \end{center}
    in that the top two layers of composites are fibre--cofibre sequences of presentable stable categories. This may be deduced, for example, from a combination of \cite[\textsection 6.4]{MNN17} and \cite[\textsection A.2]{Hermitian2}. From this, one obtains for every $E\in\spectra_{C_p}$ a pullback square
    \begin{center}
        \begin{tikzcd}
            E^{C_p}\rar\dar \ar[dr, phantom , "\lrcorner", near start]& \Phi^{C_p}E\dar\\
            E^{hC_p}\rar & E^{tC_p}
        \end{tikzcd}
    \end{center}
    of spectra (c.f. for instance \cite[Thm. 6.24]{MNN17} or \cite[Prop. A.2.12]{Hermitian2}).
\end{fact}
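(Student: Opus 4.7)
The plan is to first verify that the diagram is a stable recollement by establishing that the essential image of $\Xi^{C_p}$ coincides with the kernel of $\beta^{*}$, and then to obtain the pullback square as the fracture square of this recollement by applying the genuine fixed points functor to a standard naturality square.

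First, I would verify that $\beta^{*}\Xi^{C_p}\simeq 0$ directly from the Mackey functor description: $\Xi^{C_p}E$ is the Mackey functor taking value $E$ at $C_p/C_p$ and $0$ at $C_p/e$, while $\beta^{*}$ is evaluation at $C_p/e$. So I have the inclusion of one stable subcategory into the kernel of $\beta^{*}$. The converse -- that every $E \in \spectra_{C_p}$ with $E^{e}\simeq 0$ lies in the image of $\Xi^{C_p}$ -- is the genuinely special feature of $G=C_p$: since $\{e\}$ is the only proper subgroup, the only ``non-free'' geometric piece is $\Phi^{C_p}$, and for such $E$ the unit $E\rightarrow \Xi^{C_p}\Phi^{C_p}E$ is an equivalence after applying both $\beta^{*}$ (both sides are zero) and $\Phi^{C_p}$ (both sides are $\Phi^{C_p}E$), and these two functors are jointly conservative on $\spectra_{C_p}$. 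This gives the stable recollement; the existence of $\beta_{!}, \beta_{*}$ is built into the definition via presentability and adjoint functor theorem, and their fully faithfulness amounts to $\beta^{*}\beta_{!}\simeq \id \simeq \beta^{*}\beta_{*}$, which is again forced by the recollement.

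Next, from any such recollement one extracts a canonical commutative square, natural in $E\in\spectra_{C_p}$, of the form
\begin{equation*}
    \begin{tikzcd}
        E \rar \dar & \Xi^{C_p}\Phi^{C_p} E \dar\\
        \beta_{*}\beta^{*} E \rar & \Xi^{C_p}\Phi^{C_p}\beta_{*}\beta^{*} E,
    \end{tikzcd}
\end{equation*}
whose horizontal maps are units of $\Phi^{C_p}\dashv \Xi^{C_p}$ and whose vertical maps are units of $\beta^{*}\dashv \beta_{*}$. I would argue that this is a pullback square by showing the horizontal fibres coincide: each is identified with $\beta_{!}\beta^{*}E$ via the cofibre sequence $\beta_{!}\beta^{*}E\rightarrow E\rightarrow \Xi^{C_p}\Phi^{C_p}E$ coming from the recollement and the fact that $\beta^{*}(E)\rightarrow \beta^{*}(\beta_{*}\beta^{*}E)$ is an equivalence.

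Applying the exact functor $(-)^{C_p}$ to this pullback square yields the claimed Tate square after the following identifications: $(E)^{C_p}=E^{C_p}$ by definition; $(\Xi^{C_p} Y)^{C_p}=Y$ because $\Xi^{C_p}Y$ sits at $C_p/C_p$ with value $Y$; $(\beta_{*}X)^{C_p}=X^{hC_p}$ by the adjunction $\map_\spectra(S,(\beta_*X)^{C_p})\simeq \map_{\spectra_{C_p}}(\infl S, \beta_*X)\simeq \map_{\spectra^{BC_p}}(S, X) = X^{hC_p}$; and finally $\Phi^{C_p}\beta_{*}\beta^{*}E\simeq E^{tC_p}$, which I view as the essential input and the main obstacle. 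This last identification can be established by checking that the cofibre of the norm map $\beta_{!}\beta^{*}E\rightarrow \beta_{*}\beta^{*}E$ (whose geometric fixed points compute $E^{tC_p}$ by construction of the Tate) agrees with the cofibre of $E\rightarrow \beta_{*}\beta^{*}E$ up to $\Xi^{C_p}\Phi^{C_p}$, which in turn follows from the cofibre sequence in the previous paragraph together with $\Phi^{C_p}\beta_{!}\simeq 0$ (geometric fixed points annihilate the induced/free part).
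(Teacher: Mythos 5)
The paper records this as a \emph{Fact} and cites \cite{MNN17} and \cite{Hermitian2} rather than proving it, so your proposal must be judged on its own merits.

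Your overall route is the standard one and most steps are fine: $\ker(\beta^*)=\im(\Xi^{C_p})$ via the Mackey description plus joint conservativity of $\beta^*$ and $\Phi^{C_p}$; the fracture square with both horizontal fibres equal to $\beta_!\beta^*E$ via the localisation cofibre sequence and full faithfulness of $\beta_*$; and $(\beta_*\beta^*E)^{C_p}\simeq E^{hC_p}$ by composing the adjunctions $\infl^e_{C_p}\dashv(-)^{C_p}$ and $\beta^*\dashv\beta_*$. All of that is clean.

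The gap is in the step you yourself flag as the main obstacle, $\Phi^{C_p}\beta_*\beta^*E\simeq E^{tC_p}$, where $E^{tC_p}$ means (per the paper's Fact on Borel fixed points) the Tate construction on the Borel spectrum $E^e$. Your argument applies $\Xi^{C_p}\Phi^{C_p}$ to $Z:=\cofib(\beta_!\beta^*E\to\beta_*\beta^*E)$ and uses $\Phi^{C_p}\beta_!\simeq 0$; this shows $\Phi^{C_p}Z\simeq\Phi^{C_p}\beta_*\beta^*E$, which is the thing you are trying to compute, not an identification of it with $E^{tC_p}$. The parenthetical ``whose geometric fixed points compute $E^{tC_p}$ by construction of the Tate'' is exactly the assertion at issue, so the argument is circular as written. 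What is actually missing is the (Adams/Wirthm\"uller) isomorphism $(\beta_!\beta^*E)^{C_p}\simeq(E^e)_{hC_p}$, which is genuinely nonformal: unlike the $\beta_*$ case it does not follow by a bare adjunction chase because $\beta_!$ is a \emph{left} adjoint to $\beta^*$. With it, one notes $\beta^*Z\simeq 0$ (both $\beta_!,\beta_*$ are fully faithful), so $Z\in\im(\Xi^{C_p})$ and $Z^{C_p}\simeq\Phi^{C_p}Z$; then $Z^{C_p}\simeq\cofib((E^e)_{hC_p}\to(E^e)^{hC_p})=E^{tC_p}$, giving the claim. The paper does provide the Wirthm\"uller isomorphism $\ind^{C_p}_e\simeq\coind^{C_p}_e$ as a Fact, and $(\beta_!\beta^*E)^{C_p}\simeq(E^e)_{hC_p}$ follows from it by resolving $EC_{p+}$ by free cells and using that $(-)^{C_p}$ preserves colimits; your proposal should invoke this explicitly rather than fold it into ``by construction of the Tate.''
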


\subsection{Equivariant Poincar\'e duality}\label{subsection:equivariant_poincare_duality}

Let $G$ be a finite group.
We briefly recall the theory of $G$-equivariant Poincar\'e duality spaces, which is built upon the notion of $G$--categories. Recall that the category $\cat_G$ of $G$--categories is defined as $\func(\orbit(G)\op,\cat)$, akin to the category of $G$--spaces. This category admits an internal functor category $\udl{\func}(\udlcatC,\udlcatD)$ for each pair $\udlcatC, \udlcatD\in\cat_G$. This satisfies \[\udl{\func}(\udlcatC,\udlcatD) (G/H) \simeq \func_H(\res^G_H\udlcatC,\res^G_H\udlcatD),\] where the latter is the category of $H$--functors from $\res^G_H\udlcatC$ to $\res^G_H\udlcatD$.  A very important $G$-category for us is the $G$-category $\myuline{\spectra}$ of genuine $G$-spectra given by  $\myuline{\spectra}(G/H) = \spectra_H$.

Since $\spc_G$ is a full subcategory of $\cat_G$, we may view a $G$--space $\udl{X}$ as an object in $\cat_G$
For a $G$-space $\udl{X}$ we denote the unique map to the point by
\[ \uniquemap{X} \colon \udl{X} \rightarrow \udl{*}. \]
Write $\myuline{\spectra}^{\udl{X}} = \udl{\func}(\udl{X},\myuline{\spectra})$ for the category of equivariant local systems on $\udl{X}$. Explicitly, that amounts to specifying a local system of $H$-spectra $X^H \rightarrow \spectra_H$ for each subgroup $H \subset G$ plus compatibilities.
Colimit, restriction and limit of local systems give two adjunctions
\begin{equation*}
\begin{tikzcd}
    \myuline{\spectra}^{\udl{X}} \arrow[rr, "\uniquemap{X}_!" description, bend left] \arrow[rr, "\uniquemap{X}_*"' description, bend right] 
    &  
    & \myuline{\spectra}. \arrow[ll, "\uniquemap{X}^*"'description]
\end{tikzcd}
\end{equation*}
One should think of the colimit $X_! E$ of an equivariant local system $E$ on $\udl{X}$ as equivariant homology of $X$ twisted by $E$ and similarly of the limit as equivariant twisted cohomology.

The following is a recollection from \cite[Sec. 4.1.]{HKK_PD}.
A compact $G$-space $\udl{X}$ admits an equivariant dualising spectrum $D_{\udl{X}} \in \func_G(\udl{X}, \myuline{\spectra})$ which comes together with a collapse map $c \colon \sphere_G \rightarrow \uniquemap{X}_! D_{\udl{X}}$.
These are uniquely characterised by the property that the induced capping map
\begin{equation}\label{eq:capping_equivalence}
    \ambi{c}{\xi}{(-)} \colon \uniquemap{X}_*(-) \to \uniquemap{X}_!(D_{\udl{X}} \otimes -)
\end{equation}
is an equivalence.
Applying fixed points and homotopy groups, the collapse map really corresponds to a class in twisted equivariant homology such that capping with it induces an equivalence between equivariant cohomology and twisted equivariant homology.
Let us just mention that there is the larger class of \textit{twisted ambidextrous} $G$-spaces for which an equivalence of the form \cref{eq:capping_equivalence} exists. 

\begin{defn}
    A compact (or twisted ambidextrous) $G$-space $\udl{X}$ is called \textit{$G$-Poincar\'e} if the dualising spectrum $D_{\udl{X}}$ takes values in $\udl{\Pic}(\myuline{\spectra})$.
\end{defn}

\begin{nota}
    Let $\xi \in \func_G(\udl{X},\myuline{\spectra})$ be a local system of $G$-spectra on the $G$-space $\udl{X}$. For an $H$-fixed point $y \in X^H$, i.e. a map $y \colon \myuline{G/H} \rightarrow \udl{X}$, using the composition
    \[ \func_G(\udl{X},\myuline{\spectra}) \xrightarrow{y^*} \func_G(\myuline{G/H},\myuline{\spectra}) \simeq \spectra_H \]
    we obtain an $H$-spectrum that we will denote by $\xi(y)$.
\end{nota}

Note that a compact (or twisted ambidextrous) $G$-space $\udl{X}$ is $G$-Poincar\'e if and only if for each $y \in X^H$ the value $D_{\udl{X}}(y) \in \spectra_H$ is an invertible $H$-spectrum.

\begin{thm}[{\cite[Thm 4.2.9.]{HKK_PD}}]\label{thm:HKK_Thm.4.2.9}
    Let $\udl{X}$ be a $G$-Poincar\'e space. Then for each closed subgroup $H \leq G$, the space $X^H$ is a (nonequivariant) Poincar\'e space. Moreover, its dualising spectrum is given as the composite 
    \begin{equation*}
        X^H \xrightarrow{D_{\udl{X}}} \spectra_H \xrightarrow{\Phi^H} \spectra.
    \end{equation*}
\end{thm}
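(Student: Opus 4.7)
I would first reduce to the case $H = G$: the dualising sheaf is compatible with restriction, so $\res^G_H \udl{X}$ is $H$-Poincar\'e with dualising sheaf $\res^G_H D_{\udl{X}}$, and the statement for $H \leq G$ follows by applying the $G = H$ case to $\res^G_H \udl{X}$ and using $X^H = (\res^G_H \udl{X})^H$. So I focus on $H = G$ throughout.

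The technical crux is the key formula
\[\Phi^G \uniquemap{X}_!(\xi) \simeq (X^G \to \ast)_!\bigl(\Phi^G \xi|_{X^G}\bigr)\]
for $\xi \in \func_G(\udl{X},\myuline{\spectra})$, where $\Phi^G\xi|_{X^G}$ denotes post-composition with $\Phi^G$ of the restriction of $\xi$ along the inclusion $\iota\colon X^G \hookrightarrow \udl{X}$ of $G$-spaces. I would establish this by writing $\uniquemap{X}_!\xi$ as a colimit indexed by points $y\colon \myuline{G/K} \to \udl{X}$ with terms $\ind^G_K \xi(y)$; since $\Phi^G$ is colimit-preserving and $\Phi^G \ind^G_K \simeq 0$ whenever $K \lneq G$ (by the characterisation of $\Phi^G$ recalled above), only the $K = G$ terms survive, giving exactly the colimit over $y \in X^G$ of $\Phi^G \xi(y)$.

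Given this, the main computation runs as follows. For any $\eta \in \func(X^G,\spectra)$, set $\xi \coloneqq \iota_* \Xi^G_* \eta$, where $\Xi^G_*$ denotes post-composition with $\Xi^G$. Since $\iota$ is a fully faithful inclusion of $G$-categories (check levelwise: $X^G \hookrightarrow X^K$ is a subspace inclusion), we have $\iota^*\iota_* \simeq \id$, which together with $\Phi^G\Xi^G \simeq \id$ gives $\Phi^G\xi|_{X^G} \simeq \eta$. Since $\Xi^G$ is a right adjoint and hence preserves limits,
\[\Phi^G \uniquemap{X}_* \xi \simeq \Phi^G \Xi^G (X^G \to \ast)_*\eta \simeq (X^G\to\ast)_*\eta.\]
Applying $\Phi^G$ to the $G$-equivariant capping equivalence $\uniquemap{X}_*\xi \simeq \uniquemap{X}_!(D_{\udl{X}} \otimes \xi)$ and using the key formula together with symmetric monoidality of $\Phi^G$ to rewrite the right-hand side yields
\[(X^G \to \ast)_*\eta \simeq (X^G \to \ast)_!\bigl((\Phi^G D_{\udl{X}})|_{X^G} \otimes \eta\bigr),\]
which identifies $X^G$ as Poincar\'e with dualising sheaf $(\Phi^G D_{\udl{X}})|_{X^G}$, i.e. the announced composite $X^G \xrightarrow{D_{\udl{X}}} \spectra_G \xrightarrow{\Phi^G} \spectra$; that this is Picard-valued is automatic from $\Phi^G$ being symmetric monoidal.

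The main obstacle I anticipate is making the colimit presentation of $\uniquemap{X}_!$ in the key formula precise, and verifying its compatibility with the tensor product needed to rewrite the right-hand side of the capping equivalence. The other steps are formal manipulations with the adjunctions and symmetric monoidal structures already recalled.
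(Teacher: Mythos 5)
First, a caveat: this statement is quoted from the companion paper ([HKK\_PD, Thm.~4.2.9]) and is not proved in the present article, so there is no internal proof to compare against; your isotropy--separation skeleton (geometric fixed points preserve parametrised colimits and kill everything induced from proper subgroups, so $\Phi^G\uniquemap{X}_!$ localises at the $G$-fixed locus) is indeed the mechanism the paper itself relies on elsewhere (cf.\ \cref{obs:commuting_square_of_geometric_fixed_points} and the repeated use of [HKK\_PD, Lem.~4.2.3]), and your reduction to $H=G$ and the treatment of the limit side via $\Xi^G$ are reasonable.

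However, there is a genuine gap at the step where you set $\xi=\iota_*\Xi^G_*\eta$ and claim $\iota^*\iota_*\simeq\id$ "since $\iota$ is a fully faithful inclusion of $G$-categories (check levelwise: $X^G\hookrightarrow X^K$ is a subspace inclusion)". A map of spaces, viewed as $\infty$-groupoids, is fully faithful only if it is a monomorphism of homotopy types, i.e.\ an equivalence onto a union of path components; a point-set subspace inclusion is far from sufficient, and fixed-point inclusions essentially never have this property. For instance, for the rotation $C_p$-sphere $\udl{X}=S(\mathbb{R}\oplus\lambda)$ the levelwise map is $S^0\hookrightarrow S^2$, which is not $(-1)$-truncated, so $\iota\colon\udl{X^{G}}\to\udl{X}$ is not fully faithful and $\iota^*\iota_*\not\simeq\id$ in general: the pointwise formula for the parametrised right Kan extension computes $(\iota_*\zeta)(x)$ as a parametrised limit over the (typically huge, $\Omega X^e$-sized) fibre $\udl{X^G}\times_{\udl{X}}\udl{\ast}_x$, not as $\zeta(x)$. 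What saves your construction is not full faithfulness but the special shape of $\Xi^G_*\eta$: it vanishes on all non-fixed points (as $\res^G_K\Xi^G=0$ for $K\lneq G$), so in a relative cell decomposition of $\iota$ all free/proper-isotropy cells contribute coinduced copies of $0$ to the limit, and a cell-induction plus a convergence argument over the (possibly infinite) relative skeleta gives $(\iota_*\Xi^G_*\eta)(x)\simeq\Xi^G\eta(x)$, hence $\Phi^G(\xi|_{X^G})\simeq\eta$ as you need. This argument (and its naturality in $\eta$, which you need in order to invoke the classification of colimit-preserving functors and identify the dualising sheaf) must actually be supplied; as written, the step rests on a false assertion. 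The same slight over-optimism appears in the key formula, where "only the $K=G$ terms survive" should be run as: the counit $(X^{G})_!\iota^*\xi\to X_!\xi$ has cofibre built from cells with proper isotropy, hence is a $\Phi^G$-equivalence -- fixable, but again not literally a restriction of the indexing diagram.
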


\begin{example}[ {\cite[Cor. 5.1.16.]{HKK_PD}}]\label{example:jones_counterexample}
    Let $p$ be an odd prime and $k \geq 1$ an integer.
    There exists a compact $C_p$-space $\udl{X}$ for which
    $X^e$ is contractible while $X^{C_p} \simeq \mathbb{R} P^{2k}$. None such $C_p$-space is $C_p$-Poincar\'e.
    In particular, there are compact $G$-spaces such that all fixed points are nonequivariant Poincar\'e spaces which are not themselves $G$-Poincar\'e.
\end{example}

\section{Poincar\'e duality for the group \texorpdfstring{$C_p$}{Cp}}\label{section:PD_for_C_p}

In this section, we investigate equivariant Poincar\'e duality for the group $C_p$ more closely. 
Our goal is to prove \cref{mainthm:recognition} (c.f. \cref{thm:characterization_of_Cp_PD}) which gives a somewhat computable condition on a \textit{compact} $C_p$-space $\udl{X}$ to be $C_p$-Poincar\'e assuming that $X^e$ and $X^{C_p}$ are nonequivariant Poincar\'e spaces. This amounts to checking that $D_{\udl{X}}\colon \udl{X}\rightarrow \myuline{\spectra}$ lands in invertible objects. 

Since invertibility is a pointwise condition and since we already know that $D_{X^e}\colon X^e\rightarrow \spectra$ lands in invertible objects, it suffices to show that $D_{\udl{X}}(x)\in\spectra_{C_p}$ is invertible for every $x\in X^{C_p}$. Moreover, from our hypothesis and \cref{thm:HKK_Thm.4.2.9}, we already know that $D_{\udl{X}}(x)^e$ and $\Phi^{C_p}D_{\udl{X}}(x)$ are invertible  spectra. This consideration leads us to record the following well--known observation.
\begin{lem}\label{lem:invertible_compact_relation}
    Let $E \in \spectra_{C_p}$ be such that $E^e$ and $\Phi^{C_p} E$ are invertible. Then:
    \[ \text{$E$ is invertible} \hspace{3mm} \iff \hspace{3mm} \text{$E$ is dualisable} \hspace{3mm} \iff \hspace{3mm} \text{$E$ is compact}. \]
\end{lem}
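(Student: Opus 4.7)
The plan is to verify the three implications in turn, extracting all the content from standard properties of $\spectra_{C_p}$ recalled in \cref{subsection:equivariant_stable_homotopy_theory} together with the joint conservativity of geometric fixed points. The hypothesis on $E$ enters only in the final implication.

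First I would dispose of the equivalence between dualisability and compactness, as this holds in $\spectra_{C_p}$ without any assumption on $E$. Indeed, $\spectra_{C_p}$ is rigidly-compactly generated: the orbit suspension spectra $\{\Sigma^\infty_+ \myuline{C_p/H}\}_{H \leq C_p}$ compactly generate the category, and each is dualisable (in fact self-dual up to a twist, by the Wirthm\"uller equivalence $\ind^{C_p}_H \simeq \coind^{C_p}_H$). In any such rigidly-compactly generated symmetric monoidal stable category, compact objects coincide with dualisable objects, which gives (b)\,$\iff$\,(c).

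The implication (a)\,$\Rightarrow$\,(b) is immediate: any invertible object is automatically dualisable, with its $\otimes$-inverse serving as its dual.

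The main implication is (b)\,$\Rightarrow$\,(a), and this is where the hypothesis is used. Suppose $E$ is dualisable with evaluation $\epsilon\colon E^\vee \otimes E \to \unit$ and coevaluation $\eta\colon \unit \to E \otimes E^\vee$; then $E$ is invertible precisely when $\epsilon$ is an equivalence. The plan is to test this equivalence against the family of geometric fixed points $\Phi^H \colon \spectra_{C_p} \to \spectra$ for $H \leq C_p$, which by the recalled fact is jointly conservative. Since each $\Phi^H$ is symmetric monoidal and preserves dualisable objects, we have $\Phi^H(\epsilon) \colon \Phi^H(E)^\vee \otimes \Phi^H(E) \to \sphere$ equal to the evaluation for $\Phi^H(E)$. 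By hypothesis, $\Phi^{C_p}(E)$ is invertible and $\Phi^e(E) = E^e$ is invertible, so both these evaluations are equivalences of spectra. Joint conservativity of $\{\Phi^e, \Phi^{C_p}\}$ then forces $\epsilon$ itself to be an equivalence, so $E$ is invertible.

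There is no real obstacle here, since every ingredient is a standard fact about $\spectra_{C_p}$ recorded in the preceding subsection; the only small point worth spelling out is that $\Phi^e$ applied to an object of $\spectra_{C_p}$ agrees with the underlying spectrum functor $(-)^e = \beta^*$, so the hypothesis on $E^e$ is exactly what is needed at the trivial subgroup.
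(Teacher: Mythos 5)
Your proposal is correct and follows essentially the same route as the paper: reduce to showing dualisable implies invertible, express invertibility as the evaluation/counit map being an equivalence, and check this after applying the jointly conservative symmetric monoidal functors $(-)^e$ and $\Phi^{C_p}$, where the hypothesis gives equivalences. The only difference is cosmetic — you spell out the rigid-compact generation argument for dualisable $\iff$ compact, which the paper simply cites as standard.
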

\begin{proof}
    In $\spectra_{C_p}$, dualisablity and compactness are equivalent, and invertible spectra are dualisable. 
    So it suffices to show that if $E$ is dualisable, then it is invertible, i.e. that the counit $E \otimes E^{\vee} \rightarrow \sphere_{C_p}$
    is an equivalence. 
    But this can be checked after applying $(-)^e$ and $\Phi^{C_p}(-)$, which are jointly conservative.
    As both of these functors are symmetric monoidal, the counit for $E$ is sent to the counit for $E^e$ and $\Phi^{C_p} E$, both of which we assumed to be equivalences.
\end{proof}

Thus, by virtue of \cref{lem:invertible_compact_relation}, our task at hand is tantamount to ensuring that the $C_p$-spectrum $D_{\udl{X}}(x)$ is compact for every $x\in X^{C_p}$. To this end, we will employ various cellular manoeuvres in \cref{sec:cellular_manouvres} to obtain ``compact approximations'' to any $C_p$--spectrum; we then characterise compactness of a $C_p$--spectrum with vanishing geometric fixed points through its underlying Borel-$C_p$-spectrum in \cref{sec:compact_cp_spectra_characterisation}. Lastly, we combine all these in  \cref{sec:Cp_PD_recognition} to obtain the promised recognition principle for $C_p$-Poincar\'e spaces.

\begin{rmk}
    Our work on $C_p$-spectra heavily drew inspiration from at least two sources. The first one being \cite{krause2020picard}, which gives a nice computation of the Picard group of $\spectra_{C_p}$, and whose methods we expand on. The second one is \cite{shahDualisables}, which gives another compactness (or dualisability) criterion for $C_p$-spectra. Our approach is not exactly tailored to the methods in the latter source, and we will not need to refer to them, but it might be possible that they give another way of proving the main results in this section.
\end{rmk}

\subsection{Cellular manoeuvres and compact approximations}\label{sec:cellular_manouvres}

Recall that a $C_p$-spectrum is \textit{finite} if it lies in the stable subcategory of $\spectra_{C_p}$ generated by $\Sigma^\infty_+ \myuline{C_p/e} = \ind_e^{C_p} \sphere$ and $\Sigma^\infty_+ \myuline{C_p/C_p} = \infl_e^{C_p} \sphere$. 
A $C_p$-spectrum is compact if and only if it is a retract of a finite $C_p$-spectrum.

\begin{lem}[Tucking the free part]
    \label{lem:tucking}
    Let $X \in \spectra_{C_p}$ be such that $X^e$ is bounded below and such that $\pi_k(X^e)$ is a finitely generated abelian group for $k \leq N$ for some $N$. Then there is a fiber sequence
    \[ F \rightarrow X \rightarrow Y \]
    in $\spectra_{C_p}$ such that $F$ is finite, $Y^e$ is $N$-connected and $\Phi^{C_p} X \rightarrow \Phi^{C_p} Y$ is an equivalence.
\end{lem}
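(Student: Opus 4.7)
The plan is to iteratively attach free $C_p$-cells of the form $\ind_e^{C_p}\Sigma^k\sphere$ to $X$ so as to kill the underlying homotopy in degrees $\leq N$, taking $F$ to be the total finite extension built out of these cells and $Y$ the resulting cofiber. The key observation making this work is that $\Phi^{C_p}\ind_e^{C_p}\sphere = \Phi^{C_p}\Sigma^\infty_+\myuline{C_p/e} = 0$ by the defining characterization of geometric fixed points, so cells of this form do not affect $\Phi^{C_p}X$. Since $X^e$ is bounded below, let $n_0$ denote its connectivity; if $n_0 \geq N$ (in particular if $X^e \simeq 0$) one takes $F := 0$ and $Y := X$, so I assume $n_0 < N$ and set $X_{n_0} := X$.

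Inductively for $k = n_0, n_0+1, \ldots, N$, I construct a map $X_k \to X_{k+1}$ in $\spectra_{C_p}$ such that $(X_{k+1})^e$ is $k$-connected and $\pi_j((X_{k+1})^e)$ remains finitely generated for every $j \leq N$. Given $X_k$, pick a finite generating set $e_1, \ldots, e_r$ of the finitely generated abelian group $\pi_k((X_k)^e)$. By the adjunction $\ind_e^{C_p} \dashv (-)^e$, each class $e_i \colon \Sigma^k\sphere \to (X_k)^e$ transposes to an equivariant map $\ind_e^{C_p}\Sigma^k\sphere \to X_k$; summing these gives a map $F_k \to X_k$ with $F_k := \bigoplus_{i=1}^r \ind_e^{C_p}\Sigma^k\sphere$, and I set $X_{k+1} := \cofib(F_k \to X_k)$. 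The exact functor $(-)^e$ sends this to a cofiber sequence of spectra, whose long exact sequence in homotopy shows that $(X_{k+1})^e$ is $k$-connected (the image of $\pi_k((F_k)^e) \to \pi_k((X_k)^e)$ contains the chosen generators, hence is surjective) and that $\pi_j((X_{k+1})^e)$ remains finitely generated for $j \leq N$; the only case needing any argument is $j = k+1$, where $\pi_{k+1}((X_{k+1})^e)$ is an extension of a subgroup of $\pi_k((F_k)^e) \cong \bbZ^{rp}$ by the finitely generated group $\pi_{k+1}((X_k)^e)$, hence finitely generated.

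Setting $Y := X_{N+1}$ and $F := \fib(X \to Y)$, the $N$-connectivity of $Y^e$ is immediate from the construction. Finiteness of $F$ follows by iteratively applying the standard fact that in a stable $\infty$-category, a composable pair $A \to B \to C$ yields a fiber sequence $\fib(A \to B) \to \fib(A \to C) \to \fib(B \to C)$: this exhibits $F$ as an iterated extension of the finite spectra $F_{n_0}, \ldots, F_N$. Finally, since $\Phi^{C_p}F_k = 0$ for every $k$ as noted above, $\Phi^{C_p}F = 0$, and applying the exact functor $\Phi^{C_p}$ to the fiber sequence $F \to X \to Y$ yields the desired equivalence $\Phi^{C_p}X \xrightarrow{\simeq} \Phi^{C_p}Y$. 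I do not anticipate any substantial obstacle; the only point deserving care is tracking finite generation of the underlying homotopy groups through the long exact sequence, which goes through cleanly because $\pi_*((F_k)^e)$ is free abelian of finite rank concentrated in a single degree.
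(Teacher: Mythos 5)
Your proof is correct and follows essentially the same strategy as the paper: attach free cells of the form $\ind_e^{C_p}\Sigma^k\sphere$ (whose geometric fixed points vanish and which are compact) via the adjunction $\ind_e^{C_p}\dashv(\--)^e$ to kill underlying homotopy degree by degree. The paper streamlines the bookkeeping by first observing that the property ``the fiber is compact with trivial geometric fixed points'' is closed under composition of maps, reducing the statement by induction to the base case $N=0$ with $X^e$ connective; you instead carry out the full iteration from the connectivity $n_0$ of $X^e$ up to $N$ and then assemble $F$ as an iterated extension at the end. These are the same argument packaged differently.

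One factual slip in your closing sentence: $\pi_*((F_k)^e)$ is \emph{not} free abelian concentrated in a single degree. Since $(F_k)^e \simeq \bigoplus^{rp}\Sigma^k\sphere$, its homotopy groups are $\pi_j((F_k)^e) \cong \bigoplus^{rp}\pi_{j-k}(\sphere)$, which are nonzero (and typically nonfree, e.g.\ $\bbZ/2$ in degree $k+1$) in infinitely many degrees $\geq k$. This means the long exact sequence couples all degrees $j > k$, not just $j = k+1$ as you suggest. Fortunately the argument survives because the only property you actually use is that these groups are finitely generated, which holds since all stable stems $\pi_n(\sphere)$ are finitely generated. So the proof is fine, but the stated justification should be corrected to appeal to finite generation of stable stems rather than concentration in a single degree.
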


\begin{proof}
    Note that if $A \rightarrow B$ and $B \rightarrow C$ are maps of $C_p$-spectra whose fibers are compact with trivial geometric fixed points, then the composition $A \rightarrow C$ satisfies the same condition. Thus, by induction it suffices to consider the case where $N=0$ and $X^e$ is $-1$-connected. Pick a finite set of generators $\{ f_i \colon \sphere \rightarrow  X^e \}$ of $\pi_0(X^e)$. Then the composition
    \[   f = \left( \bigoplus_i \ind \sphere \xrightarrow{\bigoplus_i \ind f_i} \ind X^e \xrightarrow{c} X \right), \]
    where $c$ denotes the counit, induces a surjection on $\pi_0$ upon applying $(-)^e$. Now define $Y$ to be the cofiber of $f$ and $F$ its source. This finishes the proof since $\ind \sphere$ is compact and satisfies $\Phi^{C_p} \ind \sphere \simeq 0$.
\end{proof}

\begin{rmk}
    Unlike taking the appropriate connective covers, the procedure of tucking cannot be used in general to kill the homotopy groups of $X^e$. The reason is that the effect on the next higher homotopy group is quite brutal. However, if $X^{e}$ is $(l-1)$-connected and $\pi_l X^e$ is a finitely generated free $\mathbb{Z}[C_p]$-module, then the proof of \cref{lem:tucking} shows that we can kill $\pi_l X^e$ while making sure that the next higher homology group is unchanged. On the other hand, tucking preserves the geometric fixed points whereas the aforementioned connective covers do not. 
\end{rmk}

\begin{lem}
    \label{lem:extending_maps_from_geom_fixedpoints}
    Let $Q$ be a compact spectrum, $E$ a $C_p$-spectrum and $f \colon Q \rightarrow \Phi^{C_p}E$ a map. 
    Then there exists a compact $C_p$-spectrum $F$, a map $g \colon F \rightarrow E$, and an identification $\Phi^{C_p} F \simeq Q$ under which $\Phi^{C_p}g = f$.
\end{lem}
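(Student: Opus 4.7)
The plan is to reduce to the case where $E$ is itself a compact $C_p$-spectrum via a filtered-colimit argument, and then to construct $F$ by inductively attaching inflated and free $C_p$-cells.

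First, since $\spectra_{C_p}$ is compactly generated and $\Phi^{C_p}$ preserves colimits, writing $E \simeq \colim_\alpha F_\alpha$ as a filtered colimit of compact $C_p$-spectra yields $\Phi^{C_p} E \simeq \colim_\alpha \Phi^{C_p} F_\alpha$. By compactness of $Q$, the map $f$ factors through some compact stage as $Q \xrightarrow{g} \Phi^{C_p} F_{\alpha_0} \to \Phi^{C_p} E$; it thus suffices to find a compact $F \to F_{\alpha_0}$ realising $g$ on $\Phi^{C_p}$, as postcomposition with $F_{\alpha_0} \to E$ then solves the original problem.

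Next, with $E$ compact, we reduce to $Q$ a finite spectrum (the compact case follows via idempotent-completeness of $\spectra_{C_p}^c$) and build $F$ by induction on the cell-count of $Q$. At each inductive step, we have a partial lift $(F', g' \colon F' \to E, \phi' \colon \Phi^{C_p} F' \xrightarrow{\simeq} Q')$ and must extend across a new cell attached via some $\alpha \colon \sphere^{k-1} \to Q'$. The natural candidate for the equivariant attaching map is $\infl_e^{C_p} \sphere^{k-1}$, since $\Phi^{C_p} \circ \infl_e^{C_p} \simeq \id$; lifting $\alpha$ to a $C_p$-equivariant map $\infl_e^{C_p} \sphere^{k-1} \to F'$ amounts to lifting it through the canonical map $(F')^{C_p} \to \Phi^{C_p} F'$ appearing in the recollement pullback square of \cref{recollect:borelification_(co)localisation}.

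The main obstacle is that such a lift need not exist: the fibre of $(F')^{C_p} \to \Phi^{C_p} F'$ identifies via the recollement pullback square with the homotopy-orbit spectrum $(F')_{hC_p}$, so the obstruction to lifting $\alpha$ lies in $\pi_{k-2}(F')_{hC_p}$. We absorb this obstruction by modifying $F'$ through a free-cell attachment: since $\Phi^{C_p} \ind_e^{C_p} \simeq 0$, attaching a free cell $\ind_e^{C_p} \sphere^?$ to $F'$ preserves $\Phi^{C_p} F' \simeq Q'$ while altering $(F')_{hC_p}$ in a controlled way (noting that $(\ind_e^{C_p} X)_{hC_p} \simeq X$), and the composite into $E$ is adjusted accordingly using the freedom in choosing the attachment. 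Choosing the free-cell attachment so that its effect on $\pi_{k-2}(-)_{hC_p}$ cancels the obstruction class yields the required compact lift, and iterating this construction through the cellular induction produces the desired compact $F$ together with the map $F \to E$ realising $f$.
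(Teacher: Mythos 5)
Your first reduction (filtered colimits plus compactness of $Q$ to make $E$ compact) is exactly the paper's first step and is fine. The gap is in the inductive cell-attachment step, specifically in the claim that the obstruction can be cancelled by attaching free cells. You correctly identify that the obstruction to lifting an attaching map $\alpha \colon \sphere^{k-1} \to Q' \simeq \Phi^{C_p}F'$ through $(F')^{C_p} \to \Phi^{C_p}F'$ lives in $\pi_{k-2}\big((F')_{hC_p}\big)$, via the recollement square of \cref{recollect:borelification_(co)localisation}. But an equivariant map $\ind^{C_p}_e\sphere^{k-2} \to F'$ is, by adjunction, the same as a nonequivariant map $\sphere^{k-2}\to (F')^e$, and its effect on homotopy orbits is to cone off the image of that class under the canonical map $\pi_{k-2}\big((F')^e\big) \to \pi_{k-2}\big((F')_{hC_p}\big)$. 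So free-cell attachments can only kill obstruction classes lying in the image of this map, which is far from surjective in general (its cokernel picks up the positive-degree $C_p$-homology contributions in $(F')_{hC_p}$; already for $F'$ with underlying spectrum $\sphere$ and $p$ odd, $\pi_1\sphere = \bbZ/2$ does not surject onto $\pi_1(\sphere_{hC_p})\supseteq \bbZ/p$). There is no argument offered for why the obstruction class lands in this image, and it need not. A second, unaddressed problem is extending $g'$ over the new cells: even granting a lift $\infl_e^{C_p}\sphere^{k-1}\to F'$, the composite $\infl_e^{C_p}\sphere^{k-1}\to F'\to E$ is only null on geometric fixed points, and the genuine obstruction to its vanishing (equivalently, to extending $g'$ to the cofibre) lies in $\pi_{k-1}(E_{hC_p})$; the remark that the map to $E$ is ``adjusted accordingly'' does not deal with this.

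The paper's proof avoids both obstructions at once by manipulating the \emph{target} rather than the source: using \cref{lem:tucking} it replaces $E$ (up to a compact fibre with trivial geometric fixed points) by $U$ with $U^e$ so highly connected that $\map(Q,\Sigma U_{hC_p})\simeq 0$ for the given compact $Q$; then the whole map $Q \to \Phi^{C_p}U$ lifts at once through $U^{C_p}\to\Phi^{C_p}U$, giving $q\colon \infl_e^{C_p}Q \to U$, and $F$ is produced as $\fib\big(E\to\cofib(q)\big)$ rather than by extending maps over cells. If you want to salvage your cell-by-cell strategy, you would need an analogous connectivity argument making the relevant obstruction groups vanish (which is essentially the tucking lemma again), at which point the cellular induction is no longer doing any work. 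As written, the proposal has a genuine gap at its central step. (Also, the reduction from compact to finite $Q$ via idempotent completeness is not immediate --- a splitting of $\Phi^{C_p}F'\simeq Q'$ does not obviously lift to an idempotent of $F'$ over $E$ --- but this is minor compared to the main issue, and the paper's argument needs no such reduction.)
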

\begin{proof}
    First we reduce to the case where $E$ is compact.
    Write $E = \colim_{i \in I} E_i$ as a filtered colimit of compact $C_p$--spectra. 
    As $\Phi^{C_p}$ commutes with colimits and $Q$ is compact, there is some $i \in I$ for which $f \colon Q \to \Phi^{C_p} E$ factors through the compact spectrum $\Phi^{C_p} E_i$.
    If now $F$ is a compact $C_p$-spectrum together with a map $F \to E_i$ which induces the map $Q \to \Phi^{C_p} E_i$ on geometric fixed points, then the composite $F \to E_i \to E$ satisfies the claim.

    Now assume that $E$ is compact. 
    Then there exists $k \in \bbZ$ such that each map $Q \rightarrow T$ to a $k$-connected spectrum $T$ is nullhomotopic.
    By \cref{lem:tucking}, we can find $U \in \spectra_{C_p}$ together with a map $E \rightarrow U$ that has compact fiber and that induces an equivalence on geometric fixed points, such that $U^e$ is $(k-1)$-connected. Thus, $\Sigma U^e$ is $k$-connected, and consequently also $\Sigma U_{hC_p}$ is $k$-connected. Note that $U$ is compact as well.

    Consider the following diagram where the  lower horizontal maps form a fiber sequence and  the square is cartesian by \cref{fact:borel_fixed_points,recollect:borelification_(co)localisation}. 
    \begin{center}
    \begin{tikzcd}
        Q \arrow[rr] \arrow[rrrdd, "\simeq 0", bend left=70] \arrow[rdd, "a", dashed, bend right] \arrow[rd, "b", dashed] 
        &                             
        & \Phi^{C_p} E \arrow[d] 
        &                 
        \\
        & U^{C_p} \ar[dr, "\lrcorner", phantom, near start] \arrow[r] \arrow[d] 
        & \Phi^{C_p} U \arrow[d] 
        &                 
        \\
        & U^{hC_p} \arrow[r]          
        & U^{tC_p} \arrow[r]     
        & \Sigma U_{hC_p}
    \end{tikzcd}
    \end{center}
    The nullhomotopy of the long bent arrow induces the dashed morphism $a$, which in turn determines the dashed morphism $b$. By the adjunction from \cref{fact:genuine_fixed_points}, the map $b$ is adjoint to a map
    $q \colon \infl_{C_p}^e Q \rightarrow U$ which induces the map $Q \rightarrow \Phi^{C_p} U$ on geometric fixed points.  This fits into a fibre sequence $F \rightarrow E \rightarrow \cofib(q)$ where the map $E \to \cofib(q)$ is induced by the map $E \to U$ from above.
    Note that $F$ is compact as $E$ is compact, $U$ is compact as observed above, and $\infl_{C_p}^e$ preserves compactness.
    On geometric fixed points, under the identification $\Phi^{C_p} E \xrightarrow{\simeq} \Phi^{C_p} U$, this gives
    \[ \Phi^{C_p} F \simeq \fib(\Phi^{C_p} E \rightarrow \cofib(Q \rightarrow \Phi^{C_p} E)) \simeq Q \]
    as desired.
    Clearly, this identifies the map $\Phi^{C_p} F \rightarrow \Phi^{C_p}E$ with $f$.
\end{proof}

\begin{cor}\label{cor:making_almost_free}
    If $E$ is a $C_p$-spectrum with $\Phi^{C_p} E$ compact, then there exists a finite $C_p$-spectrum $F$ and a map $g \colon F \rightarrow E$
    which induces an equivalence on geometric fixed points.
\end{cor}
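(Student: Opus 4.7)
The plan is to invoke the preceding \cref{lem:extending_maps_from_geom_fixedpoints} directly, taking $Q := \Phi^{C_p} E$, which is compact by the hypothesis of the corollary, and $f := \id_{\Phi^{C_p} E}$. The lemma then produces a compact $C_p$--spectrum $F$ together with a map $g \colon F \to E$ and an identification $\Phi^{C_p} F \simeq \Phi^{C_p} E$ under which $\Phi^{C_p} g$ corresponds to the identity of $\Phi^{C_p} E$. In particular $\Phi^{C_p} g$ is an equivalence, which is precisely what the conclusion demands.

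The only mild subtlety is strengthening ``compact'' (as produced by the lemma) to ``finite'' (as claimed here). I would handle this by inspecting the construction in the proof of \cref{lem:extending_maps_from_geom_fixedpoints}: there $F$ is built as a fiber $F = \fib(E \to \cofib(q))$ with $q \colon \infl_{C_p}^e Q \to U$ and $U$ the cofiber produced by tucking. When both $E$ and $Q$ are finite, every intermediate spectrum in this construction (the finite tucking fiber, $U$, $\infl_{C_p}^e Q$, $\cofib(q)$, and finally $F$) stays inside the thick subcategory generated by $\Sigma^\infty_+\myuline{C_p/e}$ and $\Sigma^\infty_+\myuline{C_p/C_p}$, so $F$ is finite. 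The plan is therefore to present the given $E$ as a filtered colimit $E = \colim_\alpha E_\alpha$ of finite $C_p$--spectra, use compactness of $\Phi^{C_p} E$ to realise $\Phi^{C_p} E$ as a retract of some finite $\Phi^{C_p} E_\alpha$, pick a finite auxiliary $Q$ retracting onto $\Phi^{C_p} E$, and then apply the lemma to $E_\alpha$ in this finite setting before composing the resulting finite map $F \to E_\alpha$ with the structure map $E_\alpha \to E$.

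The main obstacle, if there is one, is purely the bookkeeping required to verify that after these retract replacements the composed map $F \to E$ still induces the \emph{identity} on $\Phi^{C_p}$ after identifying $\Phi^{C_p} F$ with $\Phi^{C_p} E$, rather than merely a retraction. Since the section/retraction pair produced by the compactness argument composes to the identity on $\Phi^{C_p} E$ by construction, and the lemma records this identification pointwise on geometric fixed points, this should reduce to unwinding the data of the retract, and the equivalence on $\Phi^{C_p}$ claimed by the corollary follows.
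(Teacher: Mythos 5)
Your first paragraph is exactly the paper's proof: the entire argument given there is ``Set $f=\id_{\Phi^{C_p}E}$ in \cref{lem:extending_maps_from_geom_fixedpoints}'', so on the main point you and the paper coincide.

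The rest of your proposal addresses a genuine (if minor) mismatch that the paper itself silently ignores: the lemma only produces a \emph{compact} $F$, while the corollary asserts a \emph{finite} one. Two remarks on your proposed repair. First, the specific manoeuvre of choosing ``a finite auxiliary $Q$ retracting onto $\Phi^{C_p}E$'' does not, as stated, prove the corollary: the lemma would then give $\Phi^{C_p}F\simeq Q$ with $\Phi^{C_p}g$ the retraction $Q\to\Phi^{C_p}E$, which is only a split epimorphism, not an equivalence, unless $Q\simeq\Phi^{C_p}E$ to begin with; ``unwinding the data of the retract'' cannot recover an equivalence here. The cleaner observation is that a compact nonequivariant spectrum is already finite (bounded with degreewise finitely generated homology), so you may take $Q=\Phi^{C_p}E$ itself; combined with your (correct) observation that the construction in \cref{lem:extending_maps_from_geom_fixedpoints} stays in the stable subcategory generated by $\ind_e^{C_p}\sphere$ and $\infl_e^{C_p}\sphere$ once $E$ is replaced by a finite stage $E_\alpha$ of a filtered colimit presentation, this does upgrade compact to finite. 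Second, note that for the paper's purposes the distinction is immaterial anyway: the only use of \cref{cor:making_almost_free}, in \cref{lem:technical_comparison}, requires $F$ merely to be compact, so one could also simply read ``finite'' in the corollary as ``compact'' and leave the paper's one-line proof untouched.
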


\begin{proof}
    Set $f = \id_{\Phi^{C_p}E}$ in \cref{lem:extending_maps_from_geom_fixedpoints}.
\end{proof}

The following lemma will be useful later.

\begin{lem}
    \label{lem:technical_comparison}
    Consider a cospan $X \xrightarrow{f} Z \xleftarrow{g} Y$ in $\spectra_{C_p}$  where $f$ and $g$ induce equivalences on geometric fixed points. Additionally suppose $\Phi^{C_p} X\in\spectra$ is compact. Then there exists a commutative square
    \begin{center}
    \begin{tikzcd}
        F \ar[r] \ar[d]   & Y \arrow[d, "g"] \\
        X \arrow[r, "f"] & Z          
    \end{tikzcd}
    \end{center} 
    with $F$ compact such that all maps are equivalences on geometric fixed points.
\end{lem}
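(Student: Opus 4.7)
The plan is to construct $F$ as a compact approximation of the pullback $P \coloneqq X \times_Z Y$ in $\spectra_{C_p}$. First, I would form the pullback square
\begin{center}
\begin{tikzcd}
P \ar[r, "p_Y"] \ar[d, "p_X"'] \ar[dr, phantom, "\lrcorner", near start] & Y \ar[d, "g"] \\
X \ar[r, "f"] & Z.
\end{tikzcd}
\end{center}
Since $\spectra_{C_p}$ is stable and $\Phi^{C_p}\colon \spectra_{C_p}\to \spectra$ is exact (in particular preserving pullbacks), applying $\Phi^{C_p}$ yields a pullback of spectra. Because $\Phi^{C_p}g$ is an equivalence by hypothesis, its base change $\Phi^{C_p}p_X$ is also an equivalence; symmetrically $\Phi^{C_p}p_Y$ is an equivalence. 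In particular $\Phi^{C_p}P \simeq \Phi^{C_p}X$ is compact.

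Next, I would apply \cref{cor:making_almost_free} to the $C_p$-spectrum $P$: since $\Phi^{C_p}P$ is compact, there exists a finite (hence compact) $C_p$-spectrum $F$ equipped with a map $h \colon F \to P$ such that $\Phi^{C_p}h$ is an equivalence. Defining the desired square via the composites $F \xrightarrow{h} P \xrightarrow{p_X} X$ and $F \xrightarrow{h} P \xrightarrow{p_Y} Y$ produces a commutative square (the outer commutativity is inherited from the pullback square), with $F$ compact. All four arrows in the square are equivalences on geometric fixed points: the horizontal and vertical legs $f, g$ by hypothesis, and $F \to X$ and $F \to Y$ as composites of the equivalences $\Phi^{C_p}h$, $\Phi^{C_p}p_X$, and $\Phi^{C_p}p_Y$ established above.

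There is no real obstacle to this argument once \cref{cor:making_almost_free} is in hand; the only substantive point is the observation that the pullback projections become equivalences on geometric fixed points by stable base change, which in turn guarantees the compactness hypothesis needed to invoke that corollary.
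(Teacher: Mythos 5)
Your proposal is correct and matches the paper's proof essentially verbatim: both form the pullback $X \times_Z Y$, note that its projections are equivalences on geometric fixed points (so that $\Phi^{C_p}$ of the pullback is compact), invoke \cref{cor:making_almost_free} to get a compact $F$ mapping in with an equivalence on geometric fixed points, and take the composites. The only difference is that you spell out the base-change step explicitly, which the paper leaves implicit.
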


\begin{proof}
    Let $E = X \times_Z Y$ and find a fiber sequence $F \rightarrow E \rightarrow E'$
    with $F$ compact and $\Phi^{C_p} E' \simeq 0$ as provided by \cref{cor:making_almost_free}.
    Then the outer quadrilateral in the diagram
    \begin{center}
    \begin{tikzcd}
        F \arrow[rdd, bend right] \arrow[rd] \arrow[rrd, bend left] 
        &                       
        &             
        \\
        & E \arrow[d] \arrow[r] 
        & Y \arrow[d] 
        \\
        & X \arrow[r]          
        & Z          
    \end{tikzcd}
    \end{center}
    has the desired properties.
\end{proof}

\subsection{Compact and induced \texorpdfstring{$C_p$}--spectra}\label{sec:compact_cp_spectra_characterisation}

In this section we characterise compactness for $C_p$-spectra with trivial geometric fixed points through their underlying Borel $C_p$-spectrum.
Notice that $C_p$--spectra with vanishing geometric fixed points have the following crucial properties.

\begin{lem}[CoBorel compactness]\label{lem:coborel_compactness}
    Let $X$ be a $C_p$-spectrum with $\Phi^{C_p} X \simeq 0$. Then
    \begin{myenum}{(\arabic*)}
        \item for every $Y\in \spectra_{C_p}$ the map $(-)^e\colon \map_{\spectra_{C_p}}(X,Y) \rightarrow \map_{\spectra^{BC_p}}(X^e,Y^e)$ is an equivalence;
        \item the $C_p$-spectrum $X$ is compact in $\spectra_{C_p}$ if and only if $X^e$ is compact in $\spectra^{BC_p}$.
    \end{myenum}
\end{lem}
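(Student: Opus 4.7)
The plan is to leverage the recollement of \cref{recollect:borelification_(co)localisation} and identify $C_p$-spectra with vanishing geometric fixed points with objects in the essential image of the left adjoint $\beta_!$, after which both statements become formal adjunction arguments.

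First, from the split Verdier sequence I would extract the fibre sequence $\beta_!\beta^* X \to X \to \Xi^{C_p}\Phi^{C_p} X$ for any $X \in \spectra_{C_p}$. Under the hypothesis $\Phi^{C_p} X \simeq 0$, the counit $\beta_!\beta^* X \to X$ is an equivalence, so $X \simeq \beta_! X^e$. Since $\beta_!$ is fully faithful, part (1) is then immediate from the $\beta_! \dashv \beta^*$ adjunction:
\[ \map_{\spectra_{C_p}}(X, Y) \simeq \map_{\spectra_{C_p}}(\beta_! X^e, Y) \simeq \map_{\spectra^{BC_p}}(X^e, \beta^* Y) = \map_{\spectra^{BC_p}}(X^e, Y^e). \]

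For part (2), the ``if'' direction is immediate from (1) combined with the fact that $(-)^e = \beta^*$ preserves filtered colimits (being a left adjoint to $\beta_*$): given a filtered diagram $Y = \colim_i Y_i$ in $\spectra_{C_p}$, the chain
\[ \map_{\spectra_{C_p}}(X, Y) \simeq \map_{\spectra^{BC_p}}(X^e, \colim_i Y_i^e) \simeq \colim_i \map_{\spectra^{BC_p}}(X^e, Y_i^e) \simeq \colim_i \map_{\spectra_{C_p}}(X, Y_i) \]
uses compactness of $X^e$ in the middle step and (1) at both ends. For the converse, given a filtered diagram $\{Z_i\}$ in $\spectra^{BC_p}$, the fully-faithfulness of $\beta_!$ and $\beta^*\beta_! \simeq \id$ yield $\map_{\spectra^{BC_p}}(X^e, Z_i) \simeq \map_{\spectra_{C_p}}(X, \beta_! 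Z_i)$ and likewise for the colimit; since $\beta_!$ preserves colimits, compactness of $X$ in $\spectra_{C_p}$ transports to compactness of $X^e$ in $\spectra^{BC_p}$.

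There is no real obstacle beyond careful bookkeeping with the recollement: the entire argument rests on the identification $X \simeq \beta_! X^e$ afforded by the vanishing of $\Phi^{C_p} X$, after which both parts reduce to standard manipulations with the $\beta_! \dashv \beta^* \dashv \beta_*$ adjunctions and the colimit preservation properties of these functors.
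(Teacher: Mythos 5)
Your proposal is correct and follows essentially the same route as the paper: both identify $X \simeq \beta_!\beta^* X$ from the vanishing of $\Phi^{C_p}X$ via the recollement and deduce (1) from the $\beta_!\dashv\beta^*$ adjunction. For (2) the paper simply cites that a fully faithful left adjoint with colimit-preserving right adjoint preserves and detects compactness, which is exactly what your explicit filtered-colimit manipulations unwind.
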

\begin{proof}
    We use the notations from \cref{recollect:borelification_(co)localisation}. Observe by \cref{recollect:borelification_(co)localisation} that $\Phi^{C_p}X\simeq 0$ is equivalent to the condition that the adjunction counit $\beta_!\beta^*X\rightarrow X$ is an equivalence. Now for (1), just note that
    $\map_{\spectra_{C_p}}(X,Y) \xleftarrow{\simeq}\map_{\spectra_{C_p}}(\beta_!\beta^*X,Y) \xrightarrow{\simeq}\map_{\spectra^{BC_p}}(\beta^*X,\beta^*Y)$ as claimed.    For (2), note that $\beta_! \colon \spectra^{BC_p}\rightarrow \spectra_{C_p}$ preserves and detects compactness as it is fully faithful and admits the colimit preserving right adjoint $\beta^*$.
    This shows that $X^e = \beta^*X \in\spectra^{BC_p}$ is compact if and only if $X \simeq \beta_! \beta^* X \in \spectra_{C_p}$ is compact.
\end{proof}

\begin{nota}\label{nota:induced_spectra}
    Following the notation from \cite[Def. I.3.7]{nikolausScholze}, we write $\spectra^{BC_p}_{\mathrm{ind}}\subseteq \spectra^{BC_p}$ for the smallest idempotent--complete stable subcategory generated by the image of the functor $\ind^{C_p}_e\colon \spectra\rightarrow \spectra^{BC_p}$.  Similarly, we write $\spectra^{\mathrm{ind}}_{C_p}\subseteq \spectra_{C_p}$ for the smallest idempotent--complete stable subcategory containing the image of $\ind^{C_p}_e\colon \spectra\rightarrow\spectra_{C_p}$. 
    By \cref{lem:coborel_compactness} (1), the functor $(\--)^e \colon \spectra_{C_p} \to \spectra^{BC_p}$ restricts to a fully faithful functor  $\spectra^{\mathrm{ind}}_{C_p} \rightarrow \spectra^{BC_p}_{\mathrm{ind}}$, which is also essentially surjective (and so is an equivalence) since $(-)^e=\beta^*$ is essentially surjective and $\beta_!$ and $\beta^*$ are compatible with $\ind^{C_p}_e$.
\end{nota}

\begin{lem}\label{lem:induced_spectra_with_underlying_compact}
   As full subcategories of $\spectra^{BC_p}$, we have the equality
   \[ (\spectra^{\omega})^{BC_p}\cap\spectra^{BC_p}_{\mathrm{ind}} = (\spectra^{BC_p})^{\omega}. \]
   Thus, if $E \in \spectra_{C_p}$ with $\Phi^{C_p} E = 0$, then $E$ is compact if and only if $E^e \in (\spectra^{\omega})^{BC_p}\cap\spectra^{BC_p}_{\mathrm{ind}}$.
\end{lem}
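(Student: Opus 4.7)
The equality has two inclusions, and I would handle them in turn. For the easy direction $(\spectra^{BC_p})^\omega \subseteq (\spectra^\omega)^{BC_p} \cap \spectra^{BC_p}_{\mathrm{ind}}$, note that the compact generator $\sphere[C_p] \simeq \ind^{C_p}_e \sphere$ of $\spectra^{BC_p}$ has underlying spectrum $\sphere^{\oplus p}$, which is compact in $\spectra$, and it trivially lies in $\spectra^{BC_p}_{\mathrm{ind}}$ by the defining generating set. Since both $(\spectra^\omega)^{BC_p}$ and $\spectra^{BC_p}_{\mathrm{ind}}$ are thick subcategories of $\spectra^{BC_p}$, so is their intersection, and it contains the thick closure of $\ind^{C_p}_e \sphere$, which is exactly $(\spectra^{BC_p})^\omega$.

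For the reverse direction, given $X \in \spectra^{BC_p}_{\mathrm{ind}}$ with $X^e \in \spectra^\omega$, the plan is the following adjunction trick: in the model case where $X$ appears as a retract $X \xrightarrow{i} \ind^{C_p}_e Y \xrightarrow{r} X$ with $r \circ i = \id$, the idempotent $e := i \circ r$ on $\ind^{C_p}_e Y$ corresponds under the adjunction isomorphism $\map_{\spectra^{BC_p}}(\ind Y, \ind Y) \simeq \map_\spectra(Y, Y^{\oplus p})$ to a map $f\colon Y \to Y^{\oplus p}$. Since on underlying spectra the idempotent $e^e = i^e \circ r^e$ visibly factors through $X^e$, so does $f$; retranslating through the adjunction yields a factorization $e\colon \ind Y \to \ind X^e \to \ind Y$. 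Hence $X = \im(e)$ is a retract of $\ind^{C_p}_e X^e$, which is compact in $\spectra^{BC_p}$ as $\ind^{C_p}_e$ preserves compacts and $X^e$ is compact, so $X$ is compact too.

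The principal obstacle is that an arbitrary $X \in \spectra^{BC_p}_{\mathrm{ind}}$ need not be a retract of a single $\ind^{C_p}_e Y$ but only of an iterated cofibre thereof; already the compact object $\cofib(\sphere[C_p] \xrightarrow{1-\alpha} \sphere[C_p])$ resists being a direct summand of any induced spectrum integrally. To handle general $X$, the plan is to argue by induction on the thick-closure construction: define the subcategory $\sS \subseteq \spectra^{BC_p}_{\mathrm{ind}}$ of objects satisfying "$X^e$ compact implies $X$ compact in $\spectra^{BC_p}$", verify it contains $\ind^{C_p}_e(\spectra)$ by the base case just sketched, and show it is closed under cofibres and retracts. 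The critical step here is extracting enough compactness from $X_3^e$ in a cofibre sequence $X_1 \to X_2 \to X_3$ to conclude $X_3 \in (\spectra^{BC_p})^\omega$ even when $X_1^e, X_2^e$ fail to be compact. An alternative route exploits the inclusion $\spectra^{BC_p}_{\mathrm{ind}} \subseteq \ker((-)^{tC_p})$ — which holds because induced spectra are Tate-acyclic by ambidexterity and this property propagates through the thick closure — so that $X \simeq EC_{p+} \otimes X$, and combines this equivalence with compactness of $X^e$ to truncate the resulting filtered colimit of free-on-finite-skeleton spectra to a finite cellular model.

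For the final "Thus" sentence, no further work is needed: given $E \in \spectra_{C_p}$ with $\Phi^{C_p} E \simeq 0$, \cref{lem:coborel_compactness}(2) identifies compactness of $E$ in $\spectra_{C_p}$ with compactness of $E^e$ in $\spectra^{BC_p}$, and the equality just established translates the latter into the condition $E^e \in (\spectra^\omega)^{BC_p} \cap \spectra^{BC_p}_{\mathrm{ind}}$.
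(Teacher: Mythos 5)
Your easy inclusion and the final ``Thus'' sentence are fine (and match the paper), and your model case — an honest retract of a single $\ind^{C_p}_e Y$ with compact underlying spectrum is a retract of $\ind^{C_p}_e X^e$ — is correct. But the hard inclusion is not proved, and the step you yourself flag as ``critical'' is in fact the entire content of the lemma. The property ``$X^e$ compact $\Rightarrow$ $X$ compact'' is a conditional statement, and such properties are not inherited by cofibres: in a cofibre sequence $X_1 \to X_2 \to X_3$ inside $\spectra^{BC_p}_{\mathrm{ind}}$ with $X_3^e$ compact, the terms $X_1^e, X_2^e$ need not be compact, so the inductive hypothesis never applies and the thick-closure induction collapses. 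The alternative route does not repair this. In $\spectra^{BC_p}$ one has $EC_{p+}\otimes X \simeq X$ for \emph{every} $X$, so the equivalence you invoke is vacuous there (it only has content in genuine $C_p$-spectra), and ``truncating the filtered colimit to a finite cellular model'' is exactly what needs proof: compactness of $X^e$ cannot be lifted along $(-)^e$, which is neither full nor compatible with such truncations — indeed the trivial-action sphere $\sphere \in \spectra^{BC_p}$ has compact underlying spectrum but is not compact in $\spectra^{BC_p}$ (its Tate construction is nonzero, and $(-)^{hC_p}$ does not commute with infinite sums). This example shows that Tate vanishing must enter through the mapping spectra, not through a putative free cell structure.

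For comparison, the paper's argument runs as follows. One uses the fibre sequence of small stable categories $(\spectra^{BC_p})^{\omega} \hookrightarrow (\spectra^{\omega})^{BC_p} \twoheadrightarrow \stmodSmall_{C_p}(\sphere)$ together with the formula $\mapsp_{\stmodSmall_{C_p}(\sphere)}(X,Y)\simeq (Y\otimes DX)^{tC_p}$ for underlying-compact $X,Y$. Since $(-)^{tC_p}$ vanishes on $\spectra^{BC_p}_{\mathrm{ind}}$ and this subcategory is a tensor-ideal, any $Z$ in the intersection satisfies $\mapsp_{\stmodSmall_{C_p}(\sphere)}(Z,Z)\simeq (Z\otimes DZ)^{tC_p}\simeq 0$, so $Z$ vanishes in the quotient and therefore lies in $(\spectra^{BC_p})^{\omega}$. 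If you want to salvage your Tate-theoretic idea directly, the workable version is: for $Z$ in the intersection, $\map_{\spectra^{BC_p}}(Z,W)\simeq (DZ\otimes W)^{hC_p}\simeq (DZ\otimes W)_{hC_p}$, where the second equivalence is the norm equivalence coming from Tate vanishing of the tensor-ideal $\spectra^{BC_p}_{\mathrm{ind}}$ (note $DZ$ again lies in it by the Wirthm\"uller self-duality of $\ind^{C_p}_e\sphere$); since homotopy orbits and $DZ\otimes(-)$ commute with colimits, $Z$ is compact. Either way, some version of the Tate-vanishing computation of mapping spectra is indispensable, and your proposal as written does not contain it.
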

\begin{proof}
    The inclusion $(\spectra^{\omega})^{BC_p}\cap\spectra^{BC_p}_{\mathrm{ind}} \supseteq (\spectra^{BC_p})^{\omega} $ is clear since $(\spectra^{BC_p})^{\omega}$ is generated under finite colimits and retracts by $\susps_+C_p/e\simeq \ind^{C_p}_e\sphere$. For the converse, we  use that 
    \[(\spectra^{BC_p})^{\omega} \hookrightarrow (\spectra^{\omega})^{BC_p} \twoheadrightarrow \stmodSmall_{C_p}(\sphere)\coloneqq (\spectra^{\omega})^{BC_p}/(\spectra^{BC_p})^{\omega}\]
    is a fibre sequence of small stable categories (c.f. for instance \cite[Lem. A.1.8]{Hermitian1}) and that for $X,Y\in(\spectra^{\omega})^{BC_p}$ we have the formula (c.f. for instance \cite[Lem. 4.2]{krause2020picard})
    \[\mapsp_{\stmodSmall_{C_p}(\sphere)}(X,Y)\simeq (Y\otimes DX)^{tC_p}, \] where $DX$ is the pointwise Spanier--Whitehead dual in $(\spectra^{\omega})^{BC_p}$. 

    Observe that for any $X\in\spectra^{BC_p}$ and $Y\in\spectra^{BC_p}_{\mathrm{ind}}$ one has $\big(Y\otimes \mapsp(X,\sphere)\big)^{tC_p}\simeq 0$ owing to the fact that $(-)^{tC_p}$ vanishes on $\spectra^{BC_p}_{\mathrm{ind}}$ and that $\spectra^{BC_p}_{\mathrm{ind}}\subseteq \spectra^{BC_p}$ is a tensor--ideal. Therefore, for $Z\in (\spectra^{\omega})^{BC_p}\cap\spectra^{BC_p}_{\mathrm{ind}}$, we see that
    \[\mapsp_{\stmodSmall_{C_p}(\sphere)}(Z,Z)\simeq (Z\otimes DZ)^{tC_p}\simeq 0\]
    and so $Z$ is in the kernel of the functor $(\spectra^{\omega})^{BC_p}\rightarrow \stmodSmall_{C_p}(\sphere)$. Hence, by the fibre sequence above, we see that $Z\in (\spectra^{BC_p})^{\omega}$ as required.

    The statement about compact $C_p$-spectra follows by combining the first part with \cref{lem:coborel_compactness} (2).
\end{proof}

\subsection{Recognising \texorpdfstring{$C_p$}--Poincar\'e spaces}\label{sec:Cp_PD_recognition}

\begin{cons}[Contravariant functoriality of dualising spectra]
    Consider a map $f \colon \udl{Y} \to \udl{X}$ in $\spc_{C_p}^{\omega}$. 
    We explain how to construct a canonical ``wrong--way'' map 
    \begin{equation}\label{eqn:canonical_wrong_way_map_to_singular_part}
        \beckChevalley^f \colon D_{\underline{X}} \longrightarrow f_!D_{\underline{Y}}.
    \end{equation}
    Combining the contravariant functoriality of cohomology from \cite[Construction 3.4.1]{HKK_PD} with the defining property of the dualising spectrum, we obtain the natural transformation
    \begin{equation*}
        X_! (D_{\udl{X}} \otimes -) 
        \simeq X_*(-) 
        \xrightarrow{\beckChevalley^f_*} Y_* f^*(\--) 
        \simeq Y_!(D_{{\udl{Y}}} \otimes f^*(\--)) 
        \simeq X_!(f_! D_{{\udl{Y}}} \otimes -)
    \end{equation*}
    By the classification of colimit preserving functors, see \cite[Corollary 2.30]{Cnossen2023} or \cite[Theorem 2.1.37]{HKK_PD}, this is induced by a map $\beckChevalley^f \colon D_{\udl{X}} \to f_! D_{{\udl{Y}}}$.

    Now consider $\underline{X}\in\spc_{C_p}^{\omega}$.
    For the inclusion $\varepsilon\colon \underline{X^{C_p}}\rightarrow \underline{X}$ of the singular part from \cref{cons:singular_part} we thus obtain a map $\beckChevalley^\varepsilon \colon D_{\underline{X}}\rightarrow \varepsilon_!D_{\underline{X^{C_p}}}$. 
    Applying $\varepsilon^*$ yields the map
\begin{equation}\label{eqn:pulled_back_canonical_wrong-way}
    \varepsilon^* \beckChevalley^\varepsilon \colon \varepsilon^*D_{\underline{X}} \longrightarrow \varepsilon^*\varepsilon_!D_{\underline{X^{C_p}}},
\end{equation}
which may be viewed as a morphism in the nonparametrised functor category $\func(X^{C_p},\spectra_{C_p}) \simeq \func_{C_p}(\udl{X^{C_p}}, \myuline{\spectra})$ - this equivalence may be obtained by applying \cite[Lem. 2.1.16]{HKK_PD} to the adjunction $\infl^e_G\colon \spc\rightleftharpoons \spc_G \cocolon (-)^G$.
\end{cons}

The wrong--way map \cref{eqn:pulled_back_canonical_wrong-way} satisfies the following key vanishing result permitting our characterisation of $C_p$--Poincar\'e spaces. By virtue of the lemma, the cofibre of \cref{eqn:pulled_back_canonical_wrong-way} may be viewed as measuring the ``geometric free part'' of the dualising sheaf $D_{\udl{X}}$. 

\begin{lem}\label{lem:equivalence_mod_induced}
    Let $\underline{X}\in\spc_{C_p}^{\omega}$ and let $\nu \colon \spectra_{C_p} \rightarrow \category{D}$ be an exact functor which vanishes on $\spectra^{\mathrm{ind}}_{C_p}$. Then the map
    \[ \nu(\varepsilon^* D_{\underline{X}}) \longrightarrow \nu(\varepsilon^* \varepsilon_! D_{\underline{X^{C_p}}}) \]
    in $\func(X^{C_p},\category{D})$ induced by \cref{eqn:pulled_back_canonical_wrong-way} is an equivalence. 
\end{lem}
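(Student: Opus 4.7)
The map lives in $\func(X^{C_p},\category{D})$, so the claim is pointwise; fix $x \in X^{C_p}$ and set
\[ C_x \;:=\; \cofib\!\bigl(\varepsilon^* D_{\underline{X}} \longrightarrow \varepsilon^*\varepsilon_! D_{\underline{X^{C_p}}}\bigr)(x) \;\in\; \spectra_{C_p}. \]
Since $\nu$ is exact and annihilates $\spectra^{\mathrm{ind}}_{C_p}$, it suffices to prove that $C_x$ lies in $\spectra^{\mathrm{ind}}_{C_p}$. I will verify this by checking the three equivalent conditions provided by \cref{nota:induced_spectra} and \cref{lem:induced_spectra_with_underlying_compact}: namely that $C_x$ is compact, that $\Phi^{C_p} C_x \simeq 0$, and that $C_x^e$ lies in $\spectra^{BC_p}_{\mathrm{ind}}$.

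Compactness of $C_x$ is immediate: since $\underline{X}$ is a compact $C_p$-space, so is $\underline{X^{C_p}}$, and the dualising sheaves of compact $C_p$-spaces take compact values; the functors $\varepsilon^*$ and $\varepsilon_!$ preserve this. For the vanishing of $\Phi^{C_p} C_x$, the plan is to invoke the general fact that for any compact $C_p$-space $\underline{Y}$ and any $y \in Y^{C_p}$ one has $\Phi^{C_p} D_{\underline{Y}}(y) \simeq D_{Y^{C_p}}(y)$, i.e.\ the geometric $C_p$-fixed points of the genuine dualising sheaf at a $C_p$-fixed point recover the non-equivariant dualising sheaf of the fixed-point space (this is the compact analogue of \cref{thm:HKK_Thm.4.2.9}, whose proof does not require the Poincar\'e hypothesis). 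Applying this to both $\underline{X}$ and the trivial $C_p$-space $\underline{X^{C_p}}$, the two values on $\Phi^{C_p}$ are canonically $D_{X^{C_p}}(x)$, and the Beck--Chevalley map $\beckChevalley^\varepsilon$ is precisely the natural transformation implementing this identification; hence $\Phi^{C_p}$ of the map is an equivalence and $\Phi^{C_p} C_x \simeq 0$.

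The main obstacle is verifying the third condition, that $C_x^e \in \spectra^{BC_p}_{\mathrm{ind}}$. The strategy is to restrict everything to the $C_p/e$-level, where $\beckChevalley^\varepsilon$ specialises to the non-equivariant wrong-way map $D_{X^e} \to \varepsilon'_! D_{X^{C_p}}$ for the underlying inclusion $\varepsilon' \colon X^{C_p} \hookrightarrow X^e$, now endowed with the residual $C_p$-action. Writing $j \colon U \hookrightarrow X^e$ for the open complement $U = X^e \setminus X^{C_p}$, on which $C_p$ acts freely, the open--closed recollement of local systems of spectra on $X^e$ identifies the cofibre of this restricted map at $x$ with data built from $\underline{U}$ (pushforward $j_!$ of the dualising sheaf of $\underline{U}$, suitably shifted). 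Because the $C_p$-action on $U$ is free, any such pushforward lies in $\spectra^{BC_p}_{\mathrm{ind}}$, which yields $C_x^e \in \spectra^{BC_p}_{\mathrm{ind}}$ and finishes the proof. The hard part of this argument is making the recollement identification precise in the parametrised setting and carefully tracking the residual $C_p$-action through the comparison, so that one can conclude that the cofibre genuinely lives in the induced subcategory rather than merely having vanishing Tate cohomology.
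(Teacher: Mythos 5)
The compactness claim in your second paragraph is both unnecessary and false, and it breaks the plan at the outset. You assert that ``the dualising sheaves of compact $C_p$-spaces take compact values,'' but if this held, the entire content of \cref{thm:characterization_of_Cp_PD} would collapse: by \cref{lem:invertible_compact_relation}, compactness of $D_{\udl{X}}(x)$ (given that $X^e$ and $X^{C_p}$ are nonequivariantly Poincar\'e) already forces $C_p$-Poincar\'e duality. \cref{example:jones_counterexample} exhibits a compact $C_p$-space $\udl{X}$ with $X^e\simeq *$ and $X^{C_p}\simeq\mathbb{RP}^{2k}$ both Poincar\'e, yet $\udl{X}$ not $C_p$-Poincar\'e; by \cref{lem:invertible_compact_relation} the value $D_{\udl{X}}(x)$ is \emph{not} compact for $x\in X^{C_p}$. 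So compactness of the dualising sheaf is precisely what fails in general for compact $G$-spaces. Note also that this check is not actually needed: $\spectra^{\mathrm{ind}}_{C_p}$ is the smallest idempotent-complete stable subcategory containing the image of $\ind^{C_p}_e$ on \emph{all} of $\spectra$, so it is not contained in compacts, and membership in it is equivalent (via \cref{nota:induced_spectra} and the recollement of \cref{recollect:borelification_(co)localisation}) to just the two conditions $\Phi^{C_p}C_x\simeq 0$ and $C_x^e\in\spectra^{BC_p}_{\mathrm{ind}}$.

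The ``open--closed recollement'' argument in the last paragraph also does not go through. The categories of local systems $\func(X^e,\spectra)$ and $\func(X^{C_p},\spectra)$ only depend on the homotopy types of $X^e$ and $X^{C_p}$, and a point-set level open--closed decomposition $X^e=U\sqcup X^{C_p}$ does not induce a recollement of $\func(X^e,\spectra)$: local systems are parametrised by the fundamental $\infty$-groupoid, not by the site of open subsets (e.g.\ removing a point from $S^1$ gives no such recollement). So the cofibre of the restricted wrong-way map at $x$ is not identified with $j_!$-data on the free locus in the way you suggest. The paper's route avoids this entirely: it proves that the assignment $\xi\mapsto\cofib\bigl(X^{C_p}_*\varepsilon^*\xi\to X_*\xi\bigr)$ lands in $\spectra^{\mathrm{ind}}_{C_p}$ by cell induction on $\udl{X}$ --- checking the orbit cases $\udl{C_p/e}$ (where the cofibre is induced) and $\udl{C_p/C_p}$ (where $\varepsilon$ is an equivalence), and closing under pushouts and retracts --- and then passes to the dualising-sheaf statement via the projection formula applied to $\xi = x_!\sphere_{C_p}$. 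If you want to pursue your ``check-three-conditions'' framing, you would still need a cell-induction argument (or something equivalent) to handle $C_x^e\in\spectra^{BC_p}_{\mathrm{ind}}$; the recollement heuristic cannot substitute for it.
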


\begin{proof}
    We have to show,  for any $x \in X^{C_p}$, that the map
    $\nu(D_{\udl{X}}(x)) \xrightarrow{\simeq} \nu(f_! D_{\udl{X^{C_p}}}(x))$ is an equivalence.
    First, let us show that for any compact $C_p$-space $\udl{X}$ the map $\varepsilon \colon \udl{X^{C_p}} \rightarrow \udl{X}$ induces an equivalence
    \[ \nu(X_*(-)) \xlongrightarrow{\simeq} \nu(X^{C_p}_*\varepsilon^*(-)). \]
    If $\varepsilon \colon \udl{X^{C_p}} \rightarrow \udl{X}$ is an equivalence, this is a tautology. The class of spaces for which the assertion is true is moreover stable under pushouts, retracts and contains $\udl{Y} = \udl{C_p/e}$, as 
    \[ 0 \simeq \nu(\emptyset_*\epsilon^*(-)) \simeq \nu(\udl{C_p/e}_*(-))\simeq \nu(\udl{C_p/e}_!(-)) \simeq \nu(\ind_e^{C_p}(-)) \simeq 0. \]
    Using that $X_* \simeq X_!(D_{\udl{X}} \otimes -)$ and $X^{C_p}_* \epsilon^* \simeq X^{C_p}_! (D_{\udl{X^{C_p}}} \otimes \epsilon^* -) \simeq  X_! (\epsilon_! D_{\udl{X^{C_p}}} \otimes -)$ we obtain the equivalence
    \begin{equation*}
        \overline{\nu}(X_! (D_{\udl{X}} \otimes \--)) \xlongrightarrow{\simeq} \overline{\nu}(X_!(\varepsilon_! D_{\udl{X^{C_p}}} \otimes \--)).
    \end{equation*}

    Now consider a fixed point $x \colon * \rightarrow X^{C_p}$ (which we also view as $x\colon \terminalTCat \rightarrow \underline{X}$).
    Note that the projection formula provides an equivalence, natural in $E\in\myuline{\spectra}^{\underline{X}}$
    \[ X_!(E \otimes x_!(\sphere_{C_p}))\simeq X_! x_!(x^*E) \simeq x^*E = E(x). \]
    Thus, for any $x \in X^{C_p}$, the map
    $\nu(D_{\udl{X}}(x)) \xlongrightarrow{\simeq} \nu(\epsilon_! D_{\udl{X^{C_p}}}(x))$ is an equivalence, whence the result.
\end{proof}

We are now ready to prove our main result characterising $C_p$--Poincar\'e spaces. Note that, unlike \cref{lem:equivalence_mod_induced}, the key characterising property is given solely in terms of $D_{\udl{X^{C_p}}}$ and does not involve $D_{\udl{X}}$.

\begin{thm}
    \label{thm:characterization_of_Cp_PD}
    Let $\udl{X}$ be a compact $C_p$-space for which $X^e$ and $X^{C_p}$ are (nonequivariant) Poincaré spaces. Then $\udl{X}$ is $C_p$-Poincaré if and only if the cofiber
    \[ \cofib (D_{\udl{X^{C_p}}} \rightarrow \varepsilon^* \varepsilon_{!} D_{\udl{X^{C_p}}})^e \in \func(X^{C_p}, \spectra^{BC_p}) \]
    pointwise lies in the stable subcategory $\spectra^{BC_p}_{\mathrm{ind}}\subseteq \spectra^{BC_p}$.
\end{thm}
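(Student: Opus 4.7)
The plan is to reduce both directions to pointwise invertibility of $D_{\udl{X}}(x) \in \spectra_{C_p}$ for $x \in X^{C_p}$, the points of $X^e \setminus X^{C_p}$ being handled by the assumption that $X^e$ is Poincar\'e. By \cref{lem:invertible_compact_relation}, and since $D_{\udl{X}}(x)^e \simeq D_{X^e}(x)$ is invertible by hypothesis, this invertibility at $x$ is equivalent to the conjunction of (i) invertibility of $\Phi^{C_p} D_{\udl{X}}(x)$ and (ii) compactness of $D_{\udl{X}}(x)$ in $\spectra_{C_p}$.

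The key observation underpinning both directions is that the pointwise adjunction unit $\eta \colon D_{\udl{X^{C_p}}} \to \varepsilon^* \varepsilon_! D_{\udl{X^{C_p}}}$ becomes an equivalence after applying $\Phi^{C_p}$: indeed, $\varepsilon$ is the identity on $C_p$-fixed points and $\Phi^{C_p}$, as a cocontinuous symmetric monoidal functor, commutes with the equivariant Kan extension $\varepsilon_!$ in a way that identifies $\Phi^{C_p}(\varepsilon^* \varepsilon_! F)$ with $\Phi^{C_p}(F)$. Combined with the inflation identification $\Phi^{C_p} D_{\udl{X^{C_p}}} \simeq D_{X^{C_p}}$ (since $\udl{X^{C_p}}$ is inflated from $X^{C_p}$ and dualising sheaves are compatible with symmetric monoidal left adjoints) and with \cref{lem:equivalence_mod_induced} applied to $\nu = \Phi^{C_p}$, this yields a natural chain $\Phi^{C_p} D_{\udl{X}}(x) \simeq \Phi^{C_p}(\varepsilon^* \varepsilon_! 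D_{\udl{X^{C_p}}})(x) \simeq D_{X^{C_p}}(x)$ for each $x \in X^{C_p}$. In the ``if'' direction, this immediately proves (i) by the Poincar\'e hypothesis on $X^{C_p}$; in the ``only if'' direction, it forces the cofibre $Y \coloneqq \cofib(\eta)$ to have vanishing $\Phi^{C_p}$ pointwise.

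For the ``if'' direction, knowing $\Phi^{C_p} Y = 0$ pointwise together with the hypothesis $Y^e \in \spectra^{BC_p}_{\mathrm{ind}}$ pointwise places $Y$ itself pointwise into $\spectra^{\mathrm{ind}}_{C_p}$ via the equivalence of \cref{nota:induced_spectra}. To deduce (ii), I would combine two cofibre sequences at $x$: first, $D_{\udl{X}}(x) \to \varepsilon^* \varepsilon_! D_{\udl{X^{C_p}}}(x) \to C(x)$ coming from $\varepsilon^* \beckChevalley^\varepsilon$, whose cofibre $C$ lies pointwise in $\spectra^{\mathrm{ind}}_{C_p}$ by applying \cref{lem:equivalence_mod_induced} to the Verdier quotient $\spectra_{C_p} \to \spectra_{C_p}/\spectra^{\mathrm{ind}}_{C_p}$; and second, the unit sequence $D_{\udl{X^{C_p}}}(x) \to \varepsilon^* \varepsilon_! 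D_{\udl{X^{C_p}}}(x) \to Y(x)$, where $D_{\udl{X^{C_p}}}(x) \simeq \inflated^e_{C_p} D_{X^{C_p}}(x)$ is compact by \cref{fact:genuine_fixed_points}. Compactness of $Y(x)$ and $C(x)$ in $\spectra_{C_p}$ then reduces, via \cref{lem:coborel_compactness} (2) and \cref{lem:induced_spectra_with_underlying_compact}, to compactness of the underlying nonequivariant spectra, which follows from the compactness of $\udl{X}$ together with the Poincar\'e hypotheses. Conversely, in the ``only if'' direction, invertibility of $D_{\udl{X}}(x)$ yields compactness and a symmetric use of these compactness lemmas puts $Y^e$ pointwise into $\spectra^{BC_p}_{\mathrm{ind}}$.

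The hard part I expect is the compactness analysis of step (ii) --- specifically, propagating compactness through the equivariant Kan extension $\varepsilon^* \varepsilon_!$ to verify that the underlying nonequivariant spectra of $Y(x)$ and $C(x)$ are compact. This should ultimately follow from the compactness of $\udl{X}$ and the Poincar\'e hypotheses, carefully combined with the tools developed in \cref{sec:compact_cp_spectra_characterisation}.
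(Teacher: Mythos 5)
Your reduction to pointwise compactness of $D_{\udl{X}}(x)$ via \cref{lem:invertible_compact_relation}, your identification of $\Phi^{C_p}D_{\udl{X}}(x)\simeq D_{X^{C_p}}(x)$, and your invocation of \cref{lem:equivalence_mod_induced}, \cref{nota:induced_spectra}, and \cref{lem:induced_spectra_with_underlying_compact} all match the paper's strategy. However, the part you flag as ``the hard part'' is a genuine gap, and the way you propose to fill it will not work.

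The problem is that you try to deduce compactness of $D_{\udl{X}}(x)$ by showing that $Y(x)=\cofib(g)$ and $C(x)=\cofib(f)$ (in the paper's notation, the cofibers of the unit map and of $\varepsilon^*\beckChevalley^\varepsilon$) are compact $C_p$-spectra. But since $D_{\udl{X}}(x)$ and $D_{\udl{X^{C_p}}}(x)$ both have compact underlying spectra, compactness of $Y(x)$ or $C(x)$ is essentially equivalent to compactness of the middle term $\big(\varepsilon^*\varepsilon_!D_{\udl{X^{C_p}}}\big)(x)$, and this is simply false in general. Already for $\udl{X}$ the rotation $C_p$-action on $S^2$ and $x$ a fixed pole, $\big(\varepsilon^*\varepsilon_!D_{\udl{X^{C_p}}}\big)(x)$ is a suspension spectrum of the homotopy fibre of $\{N,S\}\hookrightarrow S^2$ over $x$, which has infinite underlying space; so neither $\big(\varepsilon^*\varepsilon_!D_{\udl{X^{C_p}}}\big)(x)$ nor $Y(x)$ is compact, even though $\udl{X}$ is a smooth closed $C_p$-manifold and hence certainly $C_p$-Poincar\'e. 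Consequently the route ``show $Y(x)$ and $C(x)$ are compact, hence so is $D_{\udl{X}}(x)$'' breaks down in both directions.

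What makes the paper's argument go through is the compact approximation device of \cref{lem:technical_comparison}: one replaces the honest pullback of the cospan $D_{\udl{X}}(x)\xrightarrow{f}\varepsilon^*\varepsilon_!D_{\udl{X^{C_p}}}(x)\xleftarrow{g}D_{\udl{X^{C_p}}}(x)$ by a \emph{compact} $C_p$-spectrum $F$ mapping compatibly to both corners, which is possible precisely because $\Phi^{C_p}D_{\udl{X}}(x)\simeq D_{X^{C_p}}(x)$ is compact. The two new cofibers $\cofib(f')$ and $\cofib(g')$, unlike $Y(x)$ and $C(x)$, have compact underlying spectra (both legs of $f'$ and $g'$ do), so \cref{lem:induced_spectra_with_underlying_compact} and \cref{lem:coborel_compactness}(2) can actually be applied to them. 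The functor $\nu=\big(\spectra_{C_p}\xrightarrow{(-)^e}\spectra^{BC_p}\to\spectra^{BC_p}/\spectra^{BC_p}_{\induced}\big)$ is then used only to transport the information from $\cofib(g)$ to $\cofib(g')$ around the square. Your outline never produces a compact $C_p$-spectrum sitting over the cospan, so there is nothing to which the compactness lemmas can be applied; this is exactly the role of \cref{sec:cellular_manouvres}, which your plan does not use.
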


\begin{proof}
    As $X^e$ is assumed to be Poincar\'e the specta $D_{\udl{X}}(y) = D_{X^e}(y)\in\spectra$ are invertible for all $y\in X^e$.
    Furthermore, as $X^{C_p}$ is Poincar\'e we know that  $\Phi^{C_p} D_{\udl{X}}(x) = D_{X^{C_p}}(x)$ is invertible for all $x\in X^{C_p}$.
    It now follows from  \cref{lem:invertible_compact_relation} that $\udl{X}$ is $C_p$-Poincar\'e if and only if the $C_p$-spectrum $D_{\udl{X}}(x)$ is compact for all points $x\in X^{C_p}$.

    Note that all maps in the bottom right cospan in the diagram
    \begin{equation}\label{diag:square_cpt_dualising_spectra}
    \begin{tikzcd}
        F \ar[d, dashed, "g'"] \ar[r, dashed, "f'"]
        & D_{\udl{X^{C_p}}}(x) \arrow[d, "g"]
        \\
        D_{\udl{X}}(x) \arrow[r, "f"]
        & \varepsilon_{!} D_{\udl{X^{C_p}}}(x)
    \end{tikzcd}
    \end{equation}
    induce equivalences on geometric fixed points: the map $f$ by  \cref{lem:equivalence_mod_induced}, and the map $g$ by \cite[Lem. 4.2.3]{HKK_PD}.    We can use \cref{lem:technical_comparison} to complete \cref{diag:square_cpt_dualising_spectra} to a commutative square of $C_p$-spectra where $F$ is compact and all maps are equivalences on geometric fixed points.
    Consider the exact functor
    \begin{equation*}
        \nu = \left(\spectra_{C_p} \xrightarrow{(-)^e} \spectra^{BC_p} \rightarrow \spectra^{BC_p}/\spectra^{BC_p}_{\induced} \right).
    \end{equation*}
    Note that as $g'$ and $f'$ are maps between compact $C_p$-spectra that induce an equivalence on geometric fixed points, \cref{lem:induced_spectra_with_underlying_compact} shows that $\nu(\cofib(f')) \simeq \nu(\cofib(g')) \simeq 0$, so $\nu(f')$ and $\nu(g')$ are equivalences.

    Let us first assume that $\udl{X}$ is $C_p$-Poincar\'e.
    It follows from \cref{lem:equivalence_mod_induced} that $\nu(\cofib(f)) \simeq 0$, so also $\nu(f)$ is an equivalence.
    Thus, $\nu(g)$ is an equivalence from which we obtain $\nu(\cofib(g)) \simeq 0$, proving one direction of the claim.

    For the other direction, assume $\nu(\cofib(g)) \simeq 0$, i.e. that $\nu(g)$ is an equivalence.
    As before, as $\nu(f)$ and $\nu(f')$ are equivalences we obtain that $\nu(\cofib(g')) \simeq 0$.
    By definition of $\nu$, this means $\cofib(g')^e \in \spectra^{BC_p}_{\mathrm{ind}}$. 
    Now, $F\in\spectra_{C_p}$ and $D_{\underline{X}}(x)\in\spectra_{C_p}$ both have compact underlying spectra.  Hence, it follows from \cref{lem:induced_spectra_with_underlying_compact}  that $\cofib(g')\in\spectra_{C_p}$ is compact.
    But then $D_{\underline{X}}(x)$ is compact too, as was to be shown.
\end{proof}

\section{Genuine virtual Poincar\'e duality groups}\label{section:gvpd_groups}

In this section, we define a refinement of the classical notion of  \textit{virtual Poincar\'e duality groups}. Recall that a Poincar\'e duality group is a discrete group $\pi$ such that $B\pi$ is a Poincar\'e space, and a group is a virtual Poincar\'e duality group if it contains a Poincar\'e duality group of finite index. In this case, every finite index torsionfree subgroup will be a Poincar\'e duality group.

Now, if $\Gamma$ is a discrete group and $\pi$ a finite index torsionfree normal subgroup, then the space $B\pi$ can be enhanced to a $\Gamma/\pi$-space in a canonical way by viewing it as the quotient $\pi \backslash \classifyingspace{\finite}{\Gamma}$ of the universal space for the family of finite subgroups of $\Gamma$.
One might naturally wonder if that $\Gamma/\pi$-space is $\Gamma/\pi$-Poincar\'e, in which case we call $\Gamma$ a \textit{genuine virtual Poincar\'e duality group}. 
In fact, we will give a slightly more general definition that also includes the case where $\Gamma$ is a Lie group. 
Heuristically, genuine virtual Poincar\'e duality groups are those  which capture the homotopical properties of  groups for which the universal space of proper actions admits a smooth manifold model. 

\subsection{Universal spaces for proper actions}
Let us first collect some examples and constructions for universal spaces for proper actions from the literature.

\begin{defn}
    Let $\Gamma$ be a Lie group. By $\cptfamily$
    we denote the \textit{family of compact subgroups of $\Gamma$}. 
    The \textit{universal space for proper actions} is the universal space for the family $\cptfamily$, and is denoted by $\classifyingspace{\cptfamily}{\Gamma}$.
\end{defn}

If $\Gamma$ is discrete, the family of compact subgroups of $\Gamma$ agrees with the family of finite subgroups, and we denote it by $\finite$. 

\begin{example}[\cite{Abels_universal}, Thm 4.15.]
    \label{ex:manifold_models_from_nonpositively_curved_geometry}
    Assume the Lie group $\Gamma$ acts properly, smoothly and isometrically on a simply connected complete Riemannian manifold $M$ with nonpositive sectional curvature. Then $M$ with its $\Gamma$-action provides a model for $\classifyingspace{\cptfamily}{\Gamma}$.
\end{example}

\begin{example}[{\cite[Theorem 1]{Meintrup_Schick}}]\label{example:meinstrup_schick}
    Suppose, $\Gamma$ is a hyperbolic (discrete) group. 
    Then a barycentric subdivision of the \textit{Rips complex} of $\Gamma$ for sufficiently high $\delta > 0$ for a word metric on $\Gamma$ provides a finite $\Gamma$-CW model for $\classifyingspace{\finite}{\Gamma}$. In particular, the $\Gamma$-space $\classifyingspace{\finite}{\Gamma}$ is compact.
\end{example}

More examples for geometrically interesting models for the universal spaces for proper actions can be found in the survey \cite{Lueck_Survey_Classifying_Spaces} and the references therein.
For the next statement, recall that for a subgroup $H \le \Gamma$, its \textit{normaliser} is defined as $N_\Gamma H \coloneqq \{g \in \Gamma \mid g H g^{-1} = H\}$ and its \textit{Weyl group} as $W_\Gamma H \coloneqq N_\Gamma H  / H$.

\begin{thm}[L\"uck-Weiermann's decomposition, {\cite[Thm. 2.3, Cor. 2.10]{LWClassifying}}]
    \label{thm:lueck_weiermann}
    Let $\Gamma$ be a discrete group such that each nontrivial finite subgroup is contained in a unique maximal finite subgroup.
    Let $\mathcal{M}$ be a set of representatives of conjugacy classes of maximal finite subgroups of $\Gamma$. 
    Then the square
    \begin{equation}\label{diag:lueck_weiermann_pushout}\tag{$\square$}
    \begin{tikzcd}
        \coprod_{F \in \mathcal{M}} \ind_{N_\Gamma F}^\Gamma \myuline{EN_\Gamma F} \ar[r] \ar[d] 
        & \udl{E\Gamma} \ar[d] 
        \\
        \coprod_{F \in \mathcal{M}} \ind_{N_\Gamma F}^\Gamma \classifyingspace{\finite}{N_\Gamma F} \ar[r] 
        & \classifyingspace{\finite}{\Gamma}
    \end{tikzcd}
    \end{equation}
    is a pushout in $\spc_\Gamma$.
\end{thm}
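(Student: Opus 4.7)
The plan is to verify the pushout directly at the level of fixed points for each subgroup $H \leq \Gamma$. Since $\spc_\Gamma = \func(\orbit(\Gamma)\op, \spc)$ is a presheaf category, the functor $(-)^H = \mathrm{ev}_{\Gamma/H}$ preserves colimits, so it suffices to show that if $P$ denotes the pushout of the given span then $P^H$ is contractible when $H$ is finite and empty when $H$ is infinite — this is the universal characterisation of $\classifyingspace{\finite}{\Gamma}$.

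I would first handle the two easy cases. For $H = 1$, the space $(\udl{E\Gamma})^1 = E\Gamma$ is contractible, and each $(\ind_{N_\Gamma F}^\Gamma Y)^1$ is a disjoint union $\coprod_{\Gamma/N_\Gamma F} Y$; since the componentwise map $EN_\Gamma F \to \classifyingspace{\finite}{N_\Gamma F}$ is a nonequivariant equivalence (both are contractible), the left-hand vertical map of the square is a weak equivalence, so the pushout $P^1$ collapses to the contractible space $E\Gamma$. For $H$ infinite, the standard double coset formula
\[
(\ind_{N_\Gamma F}^\Gamma Y)^H \;=\; \coprod_{[g] \in H\backslash\Gamma/N_\Gamma F,\ g^{-1}Hg \leq N_\Gamma F} Y^{g^{-1}Hg}
\]
together with the fact that $EN_\Gamma F$ has empty fixed points under any nontrivial subgroup and $\classifyingspace{\finite}{N_\Gamma F}$ has empty fixed points under any infinite subgroup shows that all four terms of the square vanish, so $P^H = \emptyset$.

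The heart of the argument is the case where $H$ is nontrivial and finite. The top row still vanishes (since $E\Gamma$ and $EN_\Gamma F$ carry free actions), so $P^H$ reduces to $\bigl(\coprod_F \ind_{N_\Gamma F}^\Gamma \classifyingspace{\finite}{N_\Gamma F}\bigr)^H$, which by the double coset formula is a disjoint union of contractible pieces indexed by pairs $(F, [g])$ with $F \in \mathcal{M}$ and $g^{-1}Hg \leq N_\Gamma F$. Contractibility of $P^H$ thus amounts to showing that this indexing set has exactly one element.

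I expect the main obstacle to be exactly this counting, which is where the hypothesis that each nontrivial finite subgroup is contained in a unique maximal finite subgroup enters. The key group-theoretic lemma I would prove is: \emph{if $K$ is a nontrivial finite subgroup of $\Gamma$ contained in $N_\Gamma F$ for some maximal finite $F$, then $K \leq F$.} Since $F$ is normal in $N_\Gamma F$ and finite, the product $KF$ is a finite subgroup of $\Gamma$ containing $F$, so $KF = F$ by maximality. Granted this, existence of a pair $(F,[g])$ follows because $H$ is contained in a unique maximal finite subgroup $M$, which is conjugate to some $F \in \mathcal{M}$ via $g$, giving $g^{-1}Hg \leq F \leq N_\Gamma F$. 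For uniqueness, if $g'^{-1}Hg' \leq N_\Gamma F'$ for another $F' \in \mathcal{M}$, the lemma yields $g'^{-1}Hg' \leq F'$, so $g'F'g'^{-1}$ is a maximal finite subgroup containing $H$; the uniqueness hypothesis forces $g'F'g'^{-1} = gFg^{-1}$, which pins down both $F = F'$ and the double coset $[g] = [g']$. Combining the three cases shows $P$ has the fixed-point homotopy type of $\classifyingspace{\finite}{\Gamma}$, completing the proof.
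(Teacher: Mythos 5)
Your proof is correct and follows exactly the strategy the paper itself sketches in \cref{rmk:Lueck_Weiermann_example}: reduce to checking the pushout on $H$-fixed points via the double coset formula and split into the three cases $H=1$, $H$ infinite, $H$ nontrivial finite, with the uniqueness hypothesis on maximal finite subgroups (plus the observation that $K\le N_\Gamma F$ finite forces $K\le F$) doing the work in the last case.
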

\begin{rmk}
    \label{rmk:Lueck_Weiermann_example}
    To prove \cref{thm:lueck_weiermann} one checks that \cref{diag:lueck_weiermann_pushout} is a pushout on all fixed points by distinguishing the three cases $H = 1$, $H \neq 1$ finite and $H$ infinite.
    Examples of groups for which it applies are discrete groups $\Gamma$ for which there exists $\pi$ torsionfree and an extension
    \begin{equation*}
        1 \rightarrow \pi \rightarrow \Gamma \rightarrow C_p \rightarrow 1.
    \end{equation*} 
    If $\Gamma$ is assumed pseudofree, then we see that $\classifyingspace{\finite}{\Gamma}^{>1}$ is discrete.
\end{rmk}

\begin{example}
    Let us give an illustrative geometric example.
    Consider the group $\Gamma = p3$, the symmetry group of the wallpaper depicted in \cref{fig:p3}, with its action on the euclidean plane.
    This action is isometric and has finite stabilisers, so by \cref{ex:manifold_models_from_nonpositively_curved_geometry} it is a model for the universal space of proper action of $p3$.

    Note that the translations form a normal torsionfree subgroup of $p3$ of index 3.
    We can apply \cref{thm:lueck_weiermann} to obtain that the subspace of the plane with nontrivial $\Gamma$-isotropy is $\Gamma$-homotopy equivalent to $\coprod_{F} \ind_{N_\Gamma F}^\Gamma \classifyingspace{\finite}{N_\Gamma F}$. 
    Now from the picture it is easy to read off that the singular part is indeed discrete and consists of three $\Gamma$-orbits. 
    We conclude that $\Gamma$ has precisely three conjugacy classes of nontrivial finite subgroups, and each such nontrivial finite $F \subset \Gamma$ satisfies $N_\Gamma F = F$. 
\end{example}

\begin{figure}[H]
    \centering
    \subfloat{\includegraphics[width=0.2\textwidth]{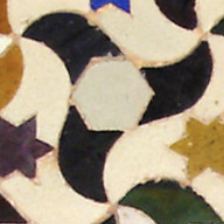}}\hspace{0.1\textwidth}
    \subfloat{\includegraphics[width=0.2\textwidth]{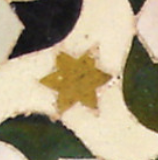}}\hspace{0.1\textwidth}
    \subfloat{\includegraphics[width=0.2\textwidth]{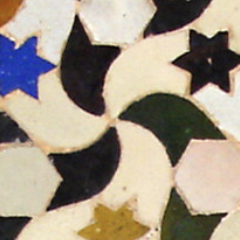}}
    \caption{Three points with $C_3$-symmetry, corresponding to three conjugacy classes of finite subgroups of $p3$.}
\end{figure}

\subsection{Equivariant Poincar\'e duality for groups}

For the following, a closed subgroup $\pi \subset \Gamma$ of a Lie group is \textit{cocompact} if the topological space $\pi \backslash \Gamma$ is compact. If $\pi \subset \Gamma$ is normal, this is equivalent to $\Gamma/\pi$ being a compact Lie group.

\begin{defn}
    Let $\Gamma$ be a Lie group and let $\pi \subset \Gamma$ be a cocompact torsionfree discrete normal subgroup. We write
    \[ \quotientspace{\Gamma}{\pi} \coloneqq \pi \backslash \classifyingspace{\cptfamily}{\Gamma} \in \spc_{\Gamma/\pi} \]
    for the quotient of $\classifyingspace{\cptfamily}{\Gamma}$ by the action of $\pi$.
\end{defn}

The following definition is supposed to capture the homological properties of Lie groups $\Gamma$ that admit a cocompact smooth manifold model for $\classifyingspace{\cptfamily}{\Gamma}$ (and a discrete torsionfree normal cocompact subgroup).
If $\Gamma$ is torsionfree, it reduces to $\Gamma$ being a Poincar\'e duality group. Here, and only here we refer to Poincar\'e duality for compact Lie groups as also developed in \cite{HKK_PD}, but the reader mainly interested in discrete group actions can assume $\Gamma$ to be discrete throughout.

\begin{defn}\label{defn:gvd_groups}
    Let $\Gamma$ be a Lie group. Then $\Gamma$ is called a  \textit{genuine virtual Poincar\'e duality group} if
    it has a cocompact torsionfree normal subgroup, and if for any such cocompact torsionfree normal subgroup $\pi \subset \Gamma$, the $\Gamma/\pi$-space 
    $\quotientspace{\Gamma}{\pi}$ is a $\Gamma/\pi$-Poincar\'e space.
\end{defn}

\begin{prop}
    \label{prop:jumping_between_subgroups}
    Suppose $\Gamma$ is a Lie group with torsionfree cocompact normal subgroups $\pi, \pi' \subset \Gamma$ whose intersection is again cocompact. Then 
    \[ \quotientspace{\Gamma}{\pi} \text{ is $\Gamma/\pi$-Poincar\'e } \iff \quotientspace{\Gamma}{\pi'} \text{ is $\Gamma/\pi'$-Poincar\'e}. \]
\end{prop}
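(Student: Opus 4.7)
The statement is symmetric in $\pi$ and $\pi'$, so it suffices to handle each direction by a transitivity argument. The plan is to first reduce to the case of a chain inclusion by setting $\pi'' := \pi \cap \pi'$: this subgroup is torsionfree (being a subgroup of $\pi$), normal (intersection of normals), cocompact by hypothesis, and discrete, so it is an admissible ``test subgroup'' in the sense of \cref{defn:gvd_groups}. It therefore suffices to prove the weaker statement that for two cocompact torsionfree discrete normal subgroups $\pi'' \subseteq \pi \subseteq \Gamma$, the space $\quotientspace{\Gamma}{\pi}$ is $\Gamma/\pi$-Poincar\'e iff $\quotientspace{\Gamma}{\pi''}$ is $\Gamma/\pi''$-Poincar\'e; applying this twice along the chain $\pi \supseteq \pi'' \subseteq \pi'$ gives the proposition.

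In this chain case, the finite group $N := \pi/\pi''$ plays the central role. It is discrete (as a quotient of a discrete group) and closed in the compact Lie group $\Gamma/\pi''$ (since $\pi$ is closed in $\Gamma$), hence compact and therefore finite. Using \cref{obs:composition_of_inductions} for $\pi \hookrightarrow \Gamma \twoheadrightarrow \Gamma/\pi''$ (or equivalently \cref{fact:quotients_and_inflation}), one identifies $\quotientspace{\Gamma}{\pi} \simeq N \backslash \quotientspace{\Gamma}{\pi''}$ as $\Gamma/\pi$-spaces. Moreover, $\pi$ acts freely on $\classifyingspace{\cptfamily}{\Gamma}$: any isotropy group is both compact and a subgroup of the discrete torsionfree group $\pi$, hence trivial. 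Consequently the residual $N$-action on $\quotientspace{\Gamma}{\pi''}$ is also free.

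The bulk of the argument is therefore the following descent principle, which constitutes the main obstacle: if $N \triangleleft H$ is a finite normal subgroup of a compact Lie group and $\udl{Y}$ is an $H$-space on which $N$ acts freely, then $\udl{Y}$ is $H$-Poincar\'e iff $N \backslash \udl{Y}$ is $H/N$-Poincar\'e. I would prove this pointwise using the criterion of \cref{thm:HKK_Thm.4.2.9}, matching dualising spectra at corresponding fixed points: for a closed subgroup $K \le H$ with $K \cap N \ne 1$ the fixed points $Y^K$ are empty by freeness of the $N$-action, while for $K$ with $K \cap N = 1$ the group $K$ injects into $H/N$, and the projection $\udl{Y} \to N\backslash\udl{Y}$ identifies $Y^K$ with the preimage in $(N \backslash Y)^{\bar K}$ of those orbits admitting an untwisted lift; the remaining orbits are accounted for by summing over the set of $N$-twisted lifts, parametrised by cocycles $K \to N$. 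Under these identifications the equivalence of small stable categories $\spectra_K \simeq \spectra_{KN/N}$ matches $D_{\udl{Y}}$ and $D_{N \backslash \udl{Y}}$, so invertibility on one side is equivalent to invertibility on the other, which gives the bi-implication and closes the proof.
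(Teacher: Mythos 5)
Your reduction is exactly the paper's: pass to $\pi'' = \pi \cap \pi'$, reduce to nested subgroups, identify $\quotientspace{\Gamma}{\pi}$ with the quotient of $\quotientspace{\Gamma}{\pi''}$ by the free action of the residual normal subgroup, and then descend Poincar\'e duality along a free quotient. The difference is that the paper disposes of the descent step in one line by citing the companion paper's result \cite[Corollary 4.3.13]{HKK_PD}, which says precisely that for a free action of a normal subgroup, equivariant Poincar\'e duality holds upstairs if and only if it holds for the quotient. You instead try to prove this descent principle yourself, and that is where your argument has a genuine gap.

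Two concrete problems. First, \cref{thm:HKK_Thm.4.2.9} is not a criterion: it is a one-way implication (equivariant Poincar\'e implies all fixed-point spaces are Poincar\'e), and the paper's own \cref{example:jones_counterexample} shows the converse fails, so no amount of matching fixed-point spaces and their nonequivariant dualising spectra can establish the ``if'' direction. Presumably what you really mean is the pointwise criterion that $D(y)$ be invertible in $\spectra_K$ for every $K$-fixed point $y$; but then the entire weight of the proof rests on your sentence asserting that ``the equivalence $\spectra_K \simeq \spectra_{KN/N}$ matches $D_{\udl{Y}}$ and $D_{N\backslash\udl{Y}}$,'' and this is asserted, not proven. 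Identifying how the dualising sheaf (and the underlying compactness or twisted-ambidexterity needed for it to exist on $N\backslash\udl{Y}$ at all) transforms under the quotient functor $N\backslash(-)$, i.e.\ under its right adjoint inflation restricted to spaces with free $N$-action, is exactly the nontrivial content of \cite[Corollary 4.3.13]{HKK_PD}; without an argument for it, invertibility cannot be transported in either direction, and the bookkeeping of twisted lifts of $K$ in the fixed points of the quotient does not substitute for it. So either cite that corollary, as the paper does, or supply a genuine proof of the descent of dualising sheaves along free quotients.
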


\begin{proof}
    If suffices to consider the case where $\pi \subseteq \pi'$. 
    Note that the normal subgroup $\pi' / \pi \subseteq \Gamma / \pi$ acts freely on $\quotientspace{\Gamma}{\pi}$.
    Applying \cite[Corollary 4.3.13]{HKK_PD}, we see that $\quotientspace{\Gamma}{\pi}$ is $\Gamma/\pi$-Poincar\'e if and only if $(\pi' / \pi) \backslash \quotientspace{\Gamma}{\pi} \simeq \quotientspace{\Gamma}{\pi'}$ is $\Gamma/\pi'$-Poincar\'e.
\end{proof}

Note that, in general, the intersection of two cocompact subgroups is not again cocompact, e.g. for $\mathbb{Z}, \sqrt{2}\mathbb{Z} \subset \mathbb{R}$.
In the case where $\Gamma$ is discrete, the intersection of two finite index subgroups is again finite, from which we obtain the following result.

\begin{cor}
    Suppose, $\Gamma$ is a discrete group. Then the following are equivalent.
    \begin{myenum}{(\arabic*)}
        \item The group $\Gamma$ is a  genuine virtual Poincar\'e duality group.
        \item There exists some torsionfree finite index normal subgroup $\pi \subset \Gamma$ such that the $\Gamma/\pi$-space $\quotientspace{\Gamma}{\pi}$ is $\Gamma/\pi$-Poincar\'e.
    \end{myenum}
\end{cor}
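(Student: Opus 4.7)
My plan is to split the proof into the two implications and to reduce both to material already established, with the main translation being that for a discrete group $\Gamma$, a closed subgroup $\pi\subset\Gamma$ is cocompact if and only if it has finite index (since $\pi\backslash\Gamma$ is discrete, hence compact exactly when finite). With this dictionary in hand, condition (2) is literally the statement that some torsionfree cocompact normal subgroup satisfies the Poincaré property from \cref{defn:gvd_groups}.

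The direction (1) $\Rightarrow$ (2) is then immediate: unwind \cref{defn:gvd_groups} to extract some cocompact torsionfree normal $\pi \subset \Gamma$, and rewrite cocompact as finite index.

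For (2) $\Rightarrow$ (1), the strategy is to transport the Poincaré property from the given $\pi$ to any competitor $\pi'$ via \cref{prop:jumping_between_subgroups}. So let $\pi \subset \Gamma$ be a torsionfree finite index normal subgroup as in (2), and let $\pi' \subset \Gamma$ be any other torsionfree cocompact (equivalently finite index) normal subgroup. I would then verify the hypotheses of the proposition for the pair $(\pi,\pi')$: the intersection $\pi\cap\pi'$ is normal in $\Gamma$, torsionfree as a subgroup of $\pi$, and of finite index in $\Gamma$ because finite index subgroups are closed under intersection (e.g.\ $[\Gamma:\pi\cap\pi']\leq [\Gamma:\pi]\cdot[\Gamma:\pi']$). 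In particular, $\pi\cap\pi'$ is cocompact, so \cref{prop:jumping_between_subgroups} applies and gives that $\quotientspace{\Gamma}{\pi'}$ is $\Gamma/\pi'$-Poincaré, which is precisely the remaining content of \cref{defn:gvd_groups}.

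There is no real obstacle here; the heart of the corollary already sits in \cref{prop:jumping_between_subgroups}. The only point to spell out is that in the discrete setting the pathological phenomenon mentioned after that proposition (intersections of cocompact subgroups failing to be cocompact, as for $\mathbb{Z},\sqrt{2}\mathbb{Z}\subset\mathbb{R}$) cannot occur, since finite index is preserved under finite intersection. In effect, the corollary simply records that for discrete $\Gamma$ the definition of a genuine virtual Poincaré duality group is insensitive to the choice of witnessing subgroup.
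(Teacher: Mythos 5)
Your proof is correct and follows the same route the paper intends: the paper derives the corollary directly from \cref{prop:jumping_between_subgroups} together with the observation (made just before the statement) that for discrete $\Gamma$ cocompact means finite index and finite-index subgroups are closed under intersection. Your write-up simply makes this implicit argument explicit, so there is nothing to add.
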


\section{Extensions by \texorpdfstring{$C_p$}{Cp}}\label{section:application_nielsel_realisation}

In this section, we study genuine virtual Poincar\'e duality groups sitting in an extension
\begin{equation}\label{eqn:extension_pseudofree}
    1 \to \pi \to \Gamma \to C_p \to 1
\end{equation}
more closely.
In \cref{sec:genuine_virtual_pd_characterisation} we will prove the characterisation from \cref{thm:genuine_virtual_duality_recognition};
in \cref{sec:property_H} we will use this to prove property (H) for pseudofree extensions.

\subsection{Characterisation of genuine virtual Poincar\'e duality groups}\label{sec:genuine_virtual_pd_characterisation}

\begin{lem}\label{lem:decompositon_of_fixed_points}
    Consider an extension of groups of the form \cref{eqn:extension_pseudofree} where $\pi$ is torsionfree. Write $\mathcal{M}$ for a complete set of representatives of the conjugacy classes of nontrivial finite subgroups of $\Gamma$.
    \begin{myenum}{(\arabic*)}
        \item If $F \le \Gamma$ is a nontrivial subgroup with $\pi \cap F = e$ (e.g. $F$ is finite), then the composition of $F\rightarrowtail\Gamma \to C_p$  is an isomorphism. In particular, $F$ is a maximal finite subgroup of $\Gamma$.
        \item There is an equivalence of spaces 
        \[ \quotientspace{\Gamma}{\pi}^{C_p} \simeq \coprod_{F \in \mathcal{M}} BW_{\Gamma}F. \]
    \end{myenum}
\end{lem}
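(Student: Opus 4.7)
Part (1) follows immediately from $\pi$ being torsionfree: any finite $F\le\Gamma$ with $F\cap\pi = e$ (which is automatic for finite $F$ since $\pi$ is torsionfree) has injective composite $F\hookrightarrow\Gamma\twoheadrightarrow C_p$, and because $F$ is nontrivial and $C_p$ has prime order, this injection is an isomorphism. Maximality follows because any finite $F'$ properly containing $F$ would likewise satisfy $F'\cap\pi = e$ and inject isomorphically into $C_p$, contradicting the strict inclusion $F\subsetneq F'$.

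For Part (2), the plan is to directly stratify the fixed-point set by conjugacy class of stabiliser. A point $[y]\in \pi\backslash\classifyingspace{\finite}{\Gamma}$ is $C_p$-fixed if and only if the $\Gamma$-stabiliser $G_y$ surjects onto $C_p$, and by Part (1) this happens exactly when $G_y$ is $\Gamma$-conjugate to a unique $F\in\mathcal{M}$. Writing $X_F\subseteq \classifyingspace{\finite}{\Gamma}$ for the $\Gamma$-subspace of points with $G_y$ conjugate to $F$, I obtain the decomposition of spaces
\[\quotientspace{\Gamma}{\pi}^{C_p} = \coprod_{F\in\mathcal{M}} \pi\backslash X_F. \]
Maximality of $F$ from Part (1) forces any finite $G_y$ containing $F$ to equal $F$, so that $\classifyingspace{\finite}{\Gamma}^F$ is precisely the locus of points with stabiliser equal to $F$, giving $X_F\simeq \ind_{N_\Gamma F}^{\Gamma}\classifyingspace{\finite}{\Gamma}^F$ as $\Gamma$-spaces.

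Since $F\subseteq N_\Gamma F$ maps isomorphically onto $C_p$, the composite $N_\Gamma F \hookrightarrow\Gamma\twoheadrightarrow C_p$ is surjective, so \cref{obs:composition_of_inductions} applies and yields
\[ \pi\backslash X_F \;\simeq\; N_\pi F\backslash \classifyingspace{\finite}{\Gamma}^F \qquad\text{where } N_\pi F := \pi\cap N_\Gamma F. \]
It remains to identify the right-hand side with $BW_\Gamma F$. Maximality of $F$ forces $W_\Gamma F = N_\Gamma F/F$ to act freely on the contractible space $\classifyingspace{\finite}{\Gamma}^F$, realising it as a model for $EW_\Gamma F$. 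Using $N_\Gamma F = N_\pi F\cdot F$ (since $F$ already maps onto $C_p$) together with $N_\pi F\cap F\subseteq\pi\cap F = e$, the composite $N_\pi F\hookrightarrow N_\Gamma F\twoheadrightarrow W_\Gamma F$ is an isomorphism. Therefore $N_\pi F\backslash \classifyingspace{\finite}{\Gamma}^F \simeq W_\Gamma F\backslash EW_\Gamma F = BW_\Gamma F$, completing the proof.

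\textbf{Main obstacle.} No single step is particularly difficult; the proof is essentially bookkeeping built on the rigid structure of finite subgroups established in Part (1). The subtlest point is correctly tracking the $C_p$-equivariant structure when invoking \cref{obs:composition_of_inductions} — in particular, observing that the residual $C_p$-action on $N_\pi F\backslash \classifyingspace{\finite}{\Gamma}^F$ is trivial, which is automatic since the section $F\subseteq N_\Gamma F$ of the quotient $N_\Gamma F\twoheadrightarrow C_p$ acts trivially on $\classifyingspace{\finite}{\Gamma}^F$ by definition.
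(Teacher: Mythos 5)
Your proof is correct, and it is close in substance to the paper's but routed differently. The paper first invokes the Lück--Weiermann pushout square (\cref{thm:lueck_weiermann}) and computes $\quotientspace{\Gamma}{\pi}^{C_p}$ by pushing $(\pi\backslash(-))^{C_p}$ through that square, using \cref{fact:singular_part_and_quotient} to pass to the singular part of the bottom-left corner. You instead produce the decomposition by hand, stratifying the $C_p$-fixed set of $\pi\backslash\classifyingspace{\finite}{\Gamma}$ by the $\Gamma$-conjugacy class of the isotropy group $G_y$ and identifying each stratum $X_F$ with $\ind_{N_\Gamma F}^{\Gamma}\classifyingspace{\finite}{\Gamma}^F$. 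After that, both proofs converge: you each apply \cref{obs:composition_of_inductions} along $N_\Gamma F\hookrightarrow\Gamma\twoheadrightarrow C_p$ and use the isomorphism $\pi\cap N_\Gamma F\cong W_\Gamma F$ (same argument: injective because $\pi$ is torsionfree, surjective because $N_\Gamma F$ is generated by $F$ and $\pi\cap N_\Gamma F$) to identify the quotient with $BW_\Gamma F$ carrying a trivial $C_p$-action. Your closing remark about why the residual $C_p$-action is trivial is worth keeping; the paper encodes this as the appearance of $\infl_{C_p}^e$ via \cref{fact:quotients_and_inflation}.

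Two small points of care. First, in part (1) the lemma allows any nontrivial subgroup $F$ with $\pi\cap F=e$, not just finite ones; your argument covers only the finite case. The general case is just as quick (the composite $F\to C_p$ has kernel $\pi\cap F=e$, so it is injective, and nontriviality plus $|C_p|=p$ prime forces surjectivity), but you should state it. Second, your direct stratification is carried out at the level of points, and the identification $\quotientspace{\Gamma}{\pi}^{C_p}=\coprod_F\pi\backslash X_F$ together with $X_F\simeq\ind_{N_\Gamma F}^{\Gamma}\classifyingspace{\finite}{\Gamma}^F$ needs to be promoted to an equivalence in $\spc$ (equivalently, in the relevant presheaf category). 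This works because $\classifyingspace{\finite}{\Gamma}$ admits a $\Gamma$-CW model in which the $X_F$ are pairwise disjoint $\Gamma$-CW subcomplexes (the disjointness using that each nontrivial finite subgroup lies in a unique maximal one, which is part (1)), but that is exactly the content of the Lück--Weiermann decomposition. So your argument implicitly reconstructs \cref{thm:lueck_weiermann} rather than citing it; citing it directly, as the paper does, saves you from having to justify these point-set manipulations inside the homotopy-coherent framework.
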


\begin{proof}
    Point (1) follows as the kernel of $\Gamma \rightarrow C_p$ is torsionfree, so every finite subgroup of $\Gamma$ will inject into $C_p$.   For (2), observe that if $\pi$ acts freely on a $\Gamma$-space $\udl{Y}$, then the  map
    \[ (\pi \backslash \udl{Y}^{>1})^{C_p} \rightarrow (\pi\backslash \udl{Y})^{C_p} \]
    is an equivalence by \cref{fact:singular_part_and_quotient}.
    Thus, since applying $(\pi\backslash-)^{C_p}$ to the top row in the pushout square \cref{diag:lueck_weiermann_pushout}
    yields the map of empty spaces, we get an identification
    \[ 
        \quotientspace{\Gamma}{\pi}^{C_p} 
        \simeq \left(\pi \backslash \coprod_{F \in \mathcal{M}} \ind_{N_\Gamma F}^\Gamma \classifyingspace{\finite}{N_\Gamma F}\right)^{C_p} 
        \simeq \coprod_{F \in \mathcal{M}} (\pi \backslash \ind_{N_\Gamma F}^\Gamma \classifyingspace{\finite}{N_\Gamma F})^{C_p}. 
    \]
    Consider the surjective composition of group homomorphisms $N_\Gamma F \subset \Gamma \rightarrow C_p$. By \cref{obs:composition_of_inductions}, we get
    \[ \pi \backslash \ind_{N_\Gamma F}^\Gamma \classifyingspace{\finite}{N_\Gamma F} \simeq (\pi \cap N_\Gamma F) \backslash \classifyingspace{\finite}{N_\Gamma F}. \]
    Now note that, by (1), the only finite subgroups of $N_\Gamma F$ are $e$ and $F$.
    This implies $\classifyingspace{\finite}{N_\Gamma F} \simeq \infl_{W_\Gamma F}^{N_\Gamma F} \myuline{EW_\Gamma F}$.
    Also, the composition $\pi\cap N_\Gamma F \subset N_\Gamma F \rightarrow W_\Gamma F$ is an isomorphism. Indeed, it is injective, as it has at most finite kernel and $\pi$ is  torsionfree, and it is surjective as $N_\Gamma F$ is generated by $F$ and $\pi \cap N_\Gamma F$, and $F$ maps to zero in $W_\Gamma F$.
    We thus obtain
    \[ (\pi \cap N_\Gamma F) \backslash \classifyingspace{\finite}{N_\Gamma F} \simeq (\pi \cap N_\Gamma F) \backslash \infl^{W_\Gamma F}_{N_{\Gamma}F}\myuline{EW_\Gamma F} \simeq \infl_{C_p}^{e} BW_\Gamma F,  \]
    the second equivalence being an instance of \cref{fact:quotients_and_inflation} for $G=N_\Gamma F$, $Q= W_\Gamma F$ and $N = N_\Gamma F \cap \pi$.
    This finishes the proof of the second assertion.
\end{proof}

\begin{lem}
    \label{lem:fiber_of_fixed_point_inclusion}
    In the situation of \cref{lem:decompositon_of_fixed_points}, for each $x \colon \udl{*} \rightarrow \quotientspace{\Gamma}{\pi}^{C_p} $ there is a pullback 
        \begin{equation}
            \begin{tikzcd}
                \udl{T} \ar[r, "a"] \ar[d, "p"] & \quotientspace{\Gamma}{\pi}^{C_p} \ar[d, "j"] \\
                \udl{*} \ar[r, "b"] & \quotientspace{\Gamma}{\pi}
            \end{tikzcd}
        \end{equation}
        of $C_p$-spaces where $\udl{T}$ is a disjoint union of $C_p$-orbits and has exactly one fixed point.
        Furthermore, $x \simeq a s$ where $s \colon \udl{*} \to \udl{T}$ denotes the section coming from the fixed point of $\udl{T}$.
\end{lem}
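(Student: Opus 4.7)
The plan is to compute $\udl{T}$ pointwise in $\spc_{C_p}$ and then identify it as a discrete $C_p$-set, exploiting the L\"uck--Weiermann pushout decomposition of $\classifyingspace{\finite}{\Gamma}$.

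First, I would apply $\pi\backslash(-)$ to the L\"uck--Weiermann pushout of \cref{thm:lueck_weiermann}, simplifying each corner via \cref{obs:composition_of_inductions} (exactly as in the proof of the previous lemma). This realises $\quotientspace{\Gamma}{\pi}$ as a pushout in $\spc_{C_p}$ whose bottom-left corner is $\coprod_F \infl^e_{C_p} BW_\Gamma F \simeq \quotientspace{\Gamma}{\pi}^{C_p}$ and whose other two corners have free $C_p$-actions. In particular, at the $C_p/e$-level, the map $j^e$ is the coproduct of the maps $BW_\Gamma F \to B\pi$ induced by the subgroup inclusions $\pi \cap N_\Gamma F \hookrightarrow \pi$ on fundamental groups. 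Since source and target are $1$-truncated and the maps on $\pi_1$ are injective, the long exact sequence of homotopy groups shows that the homotopy fiber $T^e$ at the basepoint $b^e = j^e(x)$ is the discrete set $\coprod_F \pi/(\pi \cap N_\Gamma F)$; so $\udl{T}$ is $0$-truncated, i.e., a $C_p$-set.

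Second, evaluating the pullback at $C_p/C_p$ yields $T^{C_p} \simeq *$ directly, since $j^{C_p}$ is an equivalence. To count $C_p$-fixed points inside the $C_p$-set $T^e$, I would use that taking $C_p$-fixed subspaces commutes with pullbacks of spaces, together with the identification $\quotientspace{\Gamma}{\pi}^{C_p} \simeq (\quotientspace{\Gamma}{\pi}^e)^{C_p}$ that is implicit in the previous lemma and reflects the fact that $\quotientspace{\Gamma}{\pi}$ arises from a topological $C_p$-space. This gives $(T^e)^{C_p} \simeq T^{C_p} \simeq *$, so $T^e$ has exactly one $C_p$-fixed element, namely the point distinguished by $x$.

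Finally, since $p$ is prime, every non-fixed $C_p$-orbit on $T^e$ has cardinality exactly $p$, so I obtain the promised decomposition $\udl{T} \simeq \udl{*} \sqcup \coprod_{i \in I} \udl{C_p/e}$ as a $C_p$-space. The section $s$ coming from the unique $C_p$-fixed point factors through the $\udl{*}$-summand, and the identification $x \simeq a \circ s$ is then forced by the universal property of the pullback.

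The main delicate point (and potential obstacle) is the genuineness assertion $\quotientspace{\Gamma}{\pi}^{C_p} \simeq (\quotientspace{\Gamma}{\pi}^e)^{C_p}$, which is what converts the abstract pullback statement $T^{C_p} \simeq *$ into the concrete statement that the discrete $C_p$-set $T^e$ has a unique fixed element; fortunately this is already handled by the explicit identification of fixed points carried out in the previous lemma via the L\"uck--Weiermann pushout.
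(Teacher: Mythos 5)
Your computation of the two values $T^e \simeq \coprod_F \pi/(\pi\cap N_\Gamma F)$ and $T^{C_p}\simeq \ast$ is fine, but there is a genuine gap at the step that converts this into the assertion that $\udl{T}$ is a disjoint union of $C_p$-orbits with exactly one fixed point. A presheaf on $\orbit(C_p)$ is not determined by its two values: it also carries a structure map $T^{C_p}\to (T^e)^{hC_p}$, and a $0$-truncated $C_p$-space is a coproduct of orbits precisely when this map is an equivalence (for instance $T^e=\ast$ with $T^{C_p}$ two points, or $T^e$ a set with two fixed elements only one of which is hit by $T^{C_p}$, are $0$-truncated genuine $C_p$-spaces that are not coproducts of orbits). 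So the real content is to count the honest fixed elements of the $C_p$-set $T^e$ and to identify the genuine structure, and your justification for this --- the identification $\quotientspace{\Gamma}{\pi}^{C_p}\simeq(\quotientspace{\Gamma}{\pi}^e)^{C_p}$, claimed to be implicit in \cref{lem:decompositon_of_fixed_points} because $\quotientspace{\Gamma}{\pi}$ ``arises from a topological $C_p$-space'' --- does not hold up. If the right-hand side means homotopy fixed points of the underlying Borel $C_p$-space, this is a nontrivial assertion, false for general genuine $C_p$-spaces and nowhere addressed in \cref{lem:decompositon_of_fixed_points}, which only computes the genuine fixed points via \cref{thm:lueck_weiermann}. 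If it means point-set fixed points of a topological model, then genuine fixed points do agree with them, but point-set fixed points do not commute with the \emph{homotopy} pullback defining $\udl{T}$; the statement ``fixed points commute with pullbacks'' in $\spc_{C_p}$ concerns genuine fixed points and only reproduces $T^{C_p}\simeq\ast$, not the fixed elements of $T^e$. Since every genuine $C_p$-space arises from a topological $C_p$-space, that remark carries no force, and the needed equality $(T^e)^{C_p}\simeq T^{C_p}$ is exactly what remains to be proven.

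This missing step is where the paper's proof does its work and where the group theory enters: the point $x$ is lifted along the free $\pi$-quotient to a map $\myuline{\Gamma/F}\to\classifyingspace{\finite}{\Gamma}^{>1}$ with $\pi\cap F=e$, so that $F\cong C_p$ yields a section $s\colon C_p\to\Gamma$; restricting the $\Gamma$-equivariant pullback \cref{eq:pullback_and_quotient} along $s$ identifies $\udl{T}$ with the honest $C_p$-set $\res^{\Gamma}_{C_p}\coprod_{F'}\Gamma/N_\Gamma F'$ (in particular a coproduct of orbits, with the $C_p$-action on your fibre $\coprod_{F'}\pi/(\pi\cap N_\Gamma F')$ made explicit), and then \cref{lem:free_action_on_quotient_by_normaliser}, using uniqueness of maximal finite subgroups, shows there is exactly one fixed point. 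Your proposal contains no substitute for this input: to repair it along Borel lines you would have to actually compute $(T^e)^{hC_p}$ as the fibre of $\coprod_F BW_\Gamma F\to (B\pi)^{hC_p}$, where $(B\pi)^{hC_p}$ is the space of splittings of $\Gamma\to C_p$, which amounts to redoing the same normaliser and conjugacy analysis. The remaining pieces of your argument (the identification of $T^e$, the observation that $j^{C_p}$ is an equivalence so $T^{C_p}\simeq\ast$, and the deduction of $x\simeq a s$ from the universal property) are fine once that step is supplied.
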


\begin{proof}
    Note that the $\pi$-actions on $\classifyingspace{\finite}{\Gamma}$ as well as on $\classifyingspace{\finite}{\Gamma}^{>1}$ are free, so that by \cite[Lem. 2.2.38]{HKK_PD} we have a cartesian square
    \begin{equation}
    \label{eq:pullback_and_quotient}
        \begin{tikzcd}
            \classifyingspace{\finite}{\Gamma}^{>1} \ar[r] \ar[d] \ar[dr, phantom, near start, "\lrcorner"]& \inflated^{C_p}_{\Gamma} \quotientspace{\Gamma}{\pi}^{C_p} \ar[d, "j"]\\
             \classifyingspace{\finite}{\Gamma} \ar[r] & \inflated^{C_p}_{\Gamma} \quotientspace{\Gamma}{\pi}.
        \end{tikzcd}
    \end{equation}
    Now, recallin \cref{fact:singular_part_and_quotient}, we have $\quotientspace{\Gamma}{\pi}^{C_p} \simeq \pi \backslash \classifyingspace{\finite}{\Gamma}^{>1}$.
    The point $x$ gives rise to a map of $\Gamma$-spaces $h \colon \udl{*} \rightarrow \inflated^{C_p}_{\Gamma} \quotientspace{\Gamma}{\pi}^{C_p}$, and so by \cite[Lem. 2.2.39.]{HKK_PD}, we get a commuting square
    \begin{equation}\label{eqn:orbit_filler_commuting_square}
        \begin{tikzcd}
            \myuline{\Gamma/F} \ar[r, "f"] \ar[d] & \classifyingspace{\finite}{\Gamma}^{>1} \ar[d] \\
            \udl{*}\ar[r,"h = \pi \backslash f"] & \inflated^{C_p}_{\Gamma} \quotientspace{\Gamma}{\pi}^{C_p} 
        \end{tikzcd}
    \end{equation}
    for $F$ a subgroup with $\pi \backslash (\Gamma/F) \simeq *$ (so that $F$ is nontrivial) and $\pi \cap F = e$.
    From \cref{lem:decompositon_of_fixed_points} we now learn that the map $F \subset \Gamma \rightarrow C_p$ is an isomorphism. Hence, $F$ can be used to define a section $s \colon C_p \rightarrow \Gamma$.
    We will now construct the following diagram.
    
    Restricting the pullback \cref{eq:pullback_and_quotient} along $s$, we obtain the outer cartesian square of $C_p$-spaces in
    \begin{equation*}
        \begin{tikzcd}
            \res^\Gamma_{C_p} \coprod_{F' \in \mathcal{M} } \Gamma/N_{\Gamma}F' \ar[r, "\simeq"] \ar[d] \ar[dr, phantom, "(A)"] &\res_{C_p}^{\Gamma} \classifyingspace{\finite}{\Gamma}^{>1} \ar[r] \ar[d] \ar[dr, phantom, "(B)"]& \res_{C_p}^{\Gamma}\inflated^{C_p}_{\Gamma}\quotientspace{\Gamma}{\pi}^{C_p} \ar[d, "j"] \ar[dr, phantom, "(C)"] \ar[r, "\simeq"]&\quotientspace{\Gamma}{\pi}^{C_p} \ar[d] \\
            * \ar[r, "\simeq"]  &\res_{C_p}^{\Gamma}\classifyingspace{\finite}{\Gamma} \ar[r] & \res_{C_p}^{\Gamma}\inflated^{C_p}_{\Gamma} \quotientspace{\Gamma}{\pi} \ar[r, "\simeq"]&\quotientspace{\Gamma}{\pi}.
        \end{tikzcd}
    \end{equation*}
    Here, the lower equivalence in square $(A)$ comes from the observation that for any group $G$, the space $\myuline{E_G \finite}$ becomes equivariantly contractible when restricted to a finite subgroup. The upper equivalence combines \cref{thm:lueck_weiermann} with the observation, that $\res^\Gamma_{C_p} \ind_{N_\Gamma F}^\Gamma \classifyingspace{\finite}{N_\Gamma F} \rightarrow \res^\Gamma_{C_p} \ind_{N_\Gamma F}^\Gamma \udl{*} = \res^\Gamma_{C_p} \Gamma/N_\Gamma F$ is an equivalence, which is checked easiest by looking at the map on underlying spaces and $C_p$-fixed points.
    The square labeled $(B)$ is obtained by  restricting the pullback \cref{eq:pullback_and_quotient} along the section $s \colon C_p \rightarrow \Gamma$. By virtue of $s$ being a section of $\Gamma \rightarrow C_p$, and as inflation is nothing but restriction along a surjective group homomorphism, we have $\res^{C_p}_{\Gamma} \infl^{C_p}_{\Gamma} \simeq \id_{\spc_{C_p}}$, explaining the identifications in square $(C)$.  It now follows from \cref{lem:free_action_on_quotient_by_normaliser} below that the upper left corner has exactly one fixed point, as required. 
    
    To see the last statement that $x\simeq as$, observe that the section $s$ was chosen to identify $C_p$ with a specific finite subgroup $F \subset \Gamma$ having the property that the composition 
    \[ \udl{*} \xrightarrow{eF} \res^\Gamma_{C_p} \udl{\Gamma/F} \xrightarrow{\res^\Gamma_{C_p}f} \res^\Gamma_{C_p} \classifyingspace{\finite}{\Gamma}^{>1} \rightarrow \quotientspace{\Gamma}{\pi}^{C_p} \]
    is the point $x$ by \cref{eqn:orbit_filler_commuting_square}.
\end{proof}

\begin{lem}\label{lem:free_action_on_quotient_by_normaliser}
    Let $\Gamma$ be a group for which each nontrivial finite subgroup is contained in a unique maximal finite subgroup. 
    Then for maximal finite subgroups $F, F' \subseteq \Gamma$ we have that $F$ acts freely on $\Gamma / N_\Gamma F'$ if $F$ and $F'$ are not conjugate and $(\Gamma / N_\Gamma F')^F = *$ if $F$ and $F'$ are conjugate.
\end{lem}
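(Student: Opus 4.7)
The plan is to analyze the isotropy of a coset $gN_\Gamma F'$ under the left $F$-action. Observe first that the stabilizer of $gN_\Gamma F'$ equals $F \cap gN_\Gamma(F')g^{-1} = F \cap N_\Gamma(gF'g^{-1})$. So both claims in the lemma reduce to understanding when $F$ has nontrivial intersection with the normaliser of a $\Gamma$-conjugate of $F'$, and controlling the set of $g$ for which this happens.

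The first key step will be to show that if the stabilizer is nontrivial, then in fact $F = gF'g^{-1}$. To this end, I would pick a nontrivial element $h$ in the stabilizer and set $H = \langle h\rangle$, a nontrivial finite subgroup satisfying $H \subseteq F$ and $H \subseteq N_\Gamma(gF'g^{-1})$. The crucial trick is to consider the subgroup $K \coloneqq H \cdot gF'g^{-1}$, which is well-defined as a subgroup since $H$ normalises $gF'g^{-1}$ and is finite since $|K| \le |H| \cdot |gF'g^{-1}|$. Now $K$ is a finite subgroup containing $gF'g^{-1}$, and the uniqueness hypothesis applied to the maximal finite subgroup $gF'g^{-1}$ (whose unique maximal overgroup is itself) forces $K = gF'g^{-1}$, so $H \subseteq gF'g^{-1}$. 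Thus $H$ is a nontrivial finite subgroup contained in both maximal finite subgroups $F$ and $gF'g^{-1}$, and uniqueness yields $F = gF'g^{-1}$.

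Granted this, the first assertion of the lemma is immediate: if $F$ and $F'$ are not conjugate, no $g$ can satisfy $F = gF'g^{-1}$, so every stabilizer is trivial and the action is free. For the second assertion, when $F$ and $F'$ are conjugate, fix some $g_0 \in \Gamma$ with $g_0 F' g_0^{-1} = F$; then the argument above shows that $gN_\Gamma F'$ has nontrivial isotropy precisely when $gF'g^{-1} = F$, equivalently $g \in N_\Gamma F \cdot g_0$. (And in that case the isotropy is all of $F$, since $g^{-1}Fg = F' \subseteq N_\Gamma F'$, confirming that such cosets are true fixed points.) It remains to count fixed cosets: two cosets $n g_0 N_\Gamma F'$ and $n' g_0 N_\Gamma F'$ coincide iff $n^{-1} n' \in g_0 N_\Gamma(F') g_0^{-1} = N_\Gamma(g_0 F' g_0^{-1}) = N_\Gamma F$, i.e., always, since $n, n' \in N_\Gamma F$. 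Hence there is a unique fixed coset, giving $(\Gamma/N_\Gamma F')^F = *$.

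The main obstacle is the finite-subgroup trick in the second paragraph: recognising that the product $H \cdot gF'g^{-1}$ is a finite subgroup and then leveraging the uniqueness hypothesis twice in succession (once to force $H \subseteq gF'g^{-1}$, then again to force $F = gF'g^{-1}$). Everything afterward is coset bookkeeping.
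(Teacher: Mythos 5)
Your proof is correct and is essentially the paper's argument: the paper likewise takes a nontrivial element of the stabiliser, adjoins it to the (conjugate of the) maximal finite subgroup it normalises to get a finite overgroup, and then uses maximality plus the uniqueness hypothesis to conclude $F = gF'g^{-1}$, deducing freeness in the non-conjugate case and a single fixed coset in the conjugate case. The only cosmetic difference is that the paper reduces the conjugate case to $F=F'$ before doing the coset bookkeeping, whereas you carry the conjugating element $g_0$ explicitly.
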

\begin{proof}
    Suppose there is $f \in F \backslash e$ and $g \in \Gamma$ such that $f g N_\Gamma F' = g N_\Gamma F'$.
    Then $g^{-1} f g \in N_\Gamma F'$ so the subgroup generated by $F'$ and $g^{-1} f g$ is finite.
    By maximality of $F'$, we obtain $g^{-1} f g \in F'$.
    The nontrivial element $f$ lies in both maximal finite subgroups $F$ and $g F' g^{-1}$  of $\Gamma$ which agree by uniqueness.
    This shows that the $F$-action on $\Gamma / N_\Gamma F'$ is free if $F$ and $F'$ are not conjugate.

    In the other case it suffices to show that $(\Gamma/N_\Gamma F)^F = *$.
    One fixed point is clearly given by $e N_\Gamma F$.
    Suppose that there are $f \in F\backslash e$ and $g \in \Gamma$ such that $f g N_\Gamma F = g N_\Gamma F$.
    The argument from above shows that $F = g F g^{-1}$, i.e.  $g \in N_\Gamma F$.
\end{proof}

We now come to the proof of our main characterisation result for genuine virtual Poincar\'e duality groups coming from $C_p$--extensions.

\begin{thm}
    \label{thm:extension_by_Cp_are_gvd}
    Consider an extension of groups
    \[ 1 \rightarrow \pi \rightarrow \Gamma \rightarrow C_p \rightarrow 1 \]
    where $\pi$ is a Poincar\'e duality group, and assume that the $\Gamma$-space $\classifyingspace{\finite}{\Gamma}$ is compact. Then the following are equivalent.
    \begin{myenum}{(\arabic*)}
        \item The group $\Gamma$ is a genuine virtual Poincar\'e duality group.
        \item For each nontrivial finite subgroup $F \subset \Gamma$, the Weyl group $W_\Gamma F$ is a Poincar\'e duality group.
    \end{myenum}
\end{thm}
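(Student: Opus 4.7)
The strategy is to apply \cref{thm:characterization_of_Cp_PD} to the compact $C_p$-space $\udl{X} \coloneqq \quotientspace{\Gamma}{\pi}$. The direction (1) $\Rightarrow$ (2) is formal: if $\udl{X}$ is $C_p$-Poincar\'e, then \cref{thm:HKK_Thm.4.2.9} implies that $X^{C_p}$ is nonequivariantly Poincar\'e, and combining with the identification $X^{C_p} \simeq \coprod_{F \in \mathcal{M}} BW_\Gamma F$ from \cref{lem:decompositon_of_fixed_points} forces each $BW_\Gamma F$, and hence each $W_\Gamma F$, to be a Poincar\'e duality group.

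For the direction (2) $\Rightarrow$ (1), I plan to verify each hypothesis of \cref{thm:characterization_of_Cp_PD} in turn. Compactness of $\udl{X}$ is inherited from that of $\classifyingspace{\finite}{\Gamma}$ via the left adjoint $\pi \backslash(-)$, whose right adjoint (inflation) preserves filtered colimits. The underlying space $X^e \simeq B\pi$ (since $\classifyingspace{\finite}{\Gamma}^e$ is contractible and $\pi$ acts freely) is Poincar\'e by hypothesis on $\pi$, and $X^{C_p} \simeq \coprod_F BW_\Gamma F$ is Poincar\'e by assumption (2) together with \cref{lem:decompositon_of_fixed_points}.

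The main obstacle is to verify the cofibre condition: pointwise at each $x \in X^{C_p}$, the cofibre of the adjunction unit $D_{\udl{X^{C_p}}}(x) \to (\varepsilon^* \varepsilon_! D_{\udl{X^{C_p}}})(x)$ must have underlying Borel spectrum in $\spectra^{BC_p}_{\mathrm{ind}}$. The plan is to apply \cref{lem:fiber_of_fixed_point_inclusion} to pull $\varepsilon \colon \udl{X^{C_p}} \to \udl{X}$ back along the point $b = \varepsilon x \colon \udl{*} \to \udl{X}$, producing a $C_p$-space $\udl{T}$ that is a disjoint union of $C_p$-orbits with a unique fixed point $s$ corresponding to $x$ and all remaining components being free orbits $\udl{C_p/e}$. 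By Beck--Chevalley base change for left Kan extensions, $(\varepsilon^* \varepsilon_! D_{\udl{X^{C_p}}})(x) \simeq p_! a^* D_{\udl{X^{C_p}}}$, and this splits along the orbit decomposition of $\udl{T}$: the fixed-point component $s$ contributes precisely $D_{\udl{X^{C_p}}}(x)$ (matching the source of the unit map), while each free orbit $\udl{C_p/e}$ contributes a spectrum in the essential image of $\ind_e^{C_p} \colon \spectra \to \spectra_{C_p}$, since the parametrised pushforward along the orbit projection $\udl{C_p/e} \to \udl{*}$ is precisely induction. Consequently the cofibre decomposes as a sum of induced $C_p$-spectra, whose underlying Borel spectra by definition lie in $\spectra^{BC_p}_{\mathrm{ind}}$.

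The hardest step, and the one I would devote the most care to, is rigorously setting up the Beck--Chevalley base change in the parametrised setting and translating the map \cref{eqn:pulled_back_canonical_wrong-way} through the identification $\func_{C_p}(\udl{X^{C_p}}, \udl{\spectra}) \simeq \func(X^{C_p}, \spectra_{C_p})$ used in \cref{cons:fundmanetal_adjunctions}. Once those identifications are in place, the orbit-wise splitting of $p_!$ and the resulting identification of free-orbit contributions with induced spectra follows directly from the definitions, completing the verification.
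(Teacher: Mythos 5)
Your proposal is correct and follows essentially the same route as the paper: (1)$\Rightarrow$(2) via \cref{thm:HKK_Thm.4.2.9} and \cref{lem:decompositon_of_fixed_points}, and (2)$\Rightarrow$(1) by verifying the hypotheses of \cref{thm:characterization_of_Cp_PD} and using \cref{lem:fiber_of_fixed_point_inclusion} together with base change to split $(\varepsilon^*\varepsilon_! D_{\udl{X^{C_p}}})(x)$ into the fixed-point summand plus free-orbit, hence induced, summands. The only step to spell out is that compactness of $X^{C_p}$ (inherited from $\udl{X}$) forces the decomposition $\coprod_{F} BW_\Gamma F$ to have finitely many components before concluding that $X^{C_p}$ is Poincar\'e, an ingredient you already have in hand.
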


\begin{proof}
    First of all, note that since $\pi \backslash(-)$ preserves compact objects -- it admits a right adjoint with a further right adjoint -- the $C_p$-space $\quotientspace{\Gamma}{\pi} =  \pi\backslash\udl{E\finite}_{\Gamma}$ is compact. As such,  $\quotientspace{\Gamma}{\pi}^{C_p}$ is also  compact. 
    
    To prove that (1) implies (2), recall that by \cref{lem:decompositon_of_fixed_points} we get an equivalence
    \begin{equation}\label{eqn:coproduct_decomposition}
        \quotientspace{\Gamma}{\pi}^{C_p} \simeq \coprod_{F\in \mathcal{M}} BW_\Gamma F
    \end{equation}
    where $F$ runs through a set $\mathcal{M}$ of representatives of the conjugacy classes of nontrivial finite subgroups. If a space is Poincar\'e, then each individual component is Poincar\'e. So we learn that $W_\Gamma F$ is a Poincar\'e duality group for each $F \in \mathcal{M}$. As conjugate subgroups have isomorphic Weyl groups, this implies that the conclusion holds for each nontrivial finite subgroup $F$.

    To prove that (2) implies (1), since $\quotientspace{\Gamma}{\pi}^{C_p}$ is compact by the first paragraph, there must only be finitely many components in the decomposition \cref{eqn:coproduct_decomposition}. By the hypothesis of (2), each component is Poincar\'e, implying that $\quotientspace{\Gamma}{\pi}^{C_p}$ is (nonequivariantly) Poincar\'e.
    We are thus in the situation of \cref{thm:characterization_of_Cp_PD}.
    To apply it, we have to show that for all $x \in \quotientspace{\Gamma}{\pi}^{C_p}$, the cofiber of the map $D_{\quotientspace{\Gamma}{\pi}^{C_p}}(x)^e \rightarrow j_! D_{\quotientspace{\Gamma}{\pi}^{C_p}}(x)^e$ lies in the perfect subcategory $\spectra^{BC_p}_{\induced} \subset \spectra^{BC_p}$ generated by induced spectra, where $j \colon \quotientspace{\Gamma}{\pi}^{C_p} \to \quotientspace{\Gamma}{\pi}$ denotes the inclusion.
    
    For ease of notation, let us write $\udl{X} \coloneqq \quotientspace{\Gamma}{\pi}$ in the following.
    From \cref{lem:fiber_of_fixed_point_inclusion}, we get a cartesian square of $C_p$-spaces
    \begin{equation}
            \begin{tikzcd}
                \udl{T} \ar[r, "a"]\ar[dr,phantom, near start, "\lrcorner"] \ar[d,"p"] & \udl{X^{C_p}} \ar[d,"j"] \\
                \udl{*} \ar[r, "b"] \ar[u, bend left, dashed, "s"] & \udl{X}
            \end{tikzcd}
    \end{equation}
    where $\udl{T} = \udl{*} \coprod \udl{S}$ and $\udl{S}$ is a disjoint union of free $C_p$-orbits where, furthermore, the point $x$ corresponds to the image of the composite $a s$, where $s \colon \udl{*} \to \udl{T}$ denotes the section coming from the fixed point of $\udl{T}$.

    Now observe that the map $D_{\udl{X^{C_p}}}(x) \to j^* j_! D_{\udl{X^{C_p}}}(x)$ identifies with the map $s^* a^* D_{\udl{X^{C_p}}} \to s^* p^* p_! a^* D_{\udl{X^{C_p}}} \simeq \colim_{\udl{T}} a^* D_{\udl{X^{C_p}}}$ induced by the unit $\id \to p^* p_!$.
    The decomposition $\udl{T} \simeq \udl{S} \coprod \udl{*}$ now provides a splitting 
    \[\colim_{\udl{T}} a^* D_{\udl{X^{C_p}}} \simeq s^* a^* D_{\udl{X^{C_p}}} \oplus \colim_{\udl{S}} a^* D_{\udl{X^{C_p}}}|_{\udl{S}}\]
    and the induced map
    \begin{equation}\label{eq:splitting}
        s^* a^* D_{\udl{X^{C_p}}} \to s^* a^* D_{\udl{X^{C_p}}} \oplus \colim_{\udl{S}} a^* D_{\udl{X^{C_p}}}|_{\udl{S}} 
    \end{equation}
    is an equivalence on the first component by functoriality of colimits.
    The $C_p$-spectrum $\colim_{\udl{S}} a^* D_{\udl{X^{C_p}}}|_{\udl{S}}$ is induced as $\udl{S}$ is free.
    Together this shows that the cofiber of the map $s^*a^* D_{\udl{X^{C_p}}} \to s^*a^* j^*j_! D_{\udl{X^{C_p}}}$ lies in the subcategory $\spectra^{BC_p}_{\induced} \subset \spectra^{BC_p}$ as was to be shown.
\end{proof}

\begin{rmk}
    Instead of explicitly identifying the map $\epsilon \colon D_{\udl{X^{C_p}}}(x) \to j^* j_! D_{\udl{X^{C_p}}}(x)$ in the last step of the argument above, one can also finish the proof using the following trick. Using the splitting \cref{eq:splitting}, one can reduce to showing that the cofiber of a selfmap $f$ of the invertible spectrum $D_{\udl{X}^{C_p}}(x)$ is induced. As $j$ is an equivalence on $C_p$-fixed points, one sees that $\Phi^{C_p}(\epsilon)$ is an equivalence. This implies that the selfmap $f$ in question also is an equivalence on geometric fixed points. The Burnside congruences show that $\cofib(f)^{e}$ is $n$-torsion for $n$ congruent to $1$ mod $p$, in particular $n$ coprime to $p$. But every compact spectrum with $C_p$-action which is $n$-torsion for $n$ coprime to $p$ vanishes in $\stmodSmall(C_p)$, so it is induced.
\end{rmk}

\subsection{Condition (H)}\label{sec:property_H}

In this section we will prove an abstract version of Condition (H) for general $C_p$-Poincar\'e spaces with discrete fixed points and see how this implies Condition (H) from \cref{nota:condition_H}.
Essential for this is the theory of singular parts and equivariant fundamental classes, especially the gluing class, introduced in \cite[\textsection 4.5]{HKK_PD}.
Let us recall the  relevant notions and constructions here.

\begin{cons}[{\cite[Cons. 4.5.4]{HKK_PD}}]\label{cons:gluing_class}
    For $\xi\in \func_G(\underline{X},\myuline{\spectra})$, there is a preferred map $(\uniquemap{X}_! \xi)^{hG} \to \Sigma (\uniquemap{X
    ^{>1}}_! \varepsilon^* \xi)_{hG}$.
    It is defined as the blue composite in the  commuting diagram
    \begin{equation}\label{diag:black_magic_Atiyah-Bott}
    \begin{tikzcd}
        (\uniquemap{X^{>1}}_!\varepsilon^*\xi)_{hG}\rar\dar\ar[dr, phantom, very near end, "\ulcorner"] 
        & (\uniquemap{X^{>1}}_!\varepsilon^*\xi)^{hG} \rar\dar
        & (\uniquemap{X^{>1}}_!\varepsilon^*\xi)^{tG} \rar[color =blue]\dar["\simeq",color =blue]
        & \Sigma(\uniquemap{X^{>1}}_!\varepsilon^*\xi)_{hG}
        \\
        (\uniquemap{X}_!\xi)_{hG}\rar\dar 
        & (\uniquemap{X}_!\xi)^{hG}\rar[color =blue]\dar[color =red] 
        & (\uniquemap{X}_!\xi)^{tG}
        \\
        Q_{hG} \rar["\simeq",color =red]\dar[color =red]
        & Q^{hG} 
        \\
        \Sigma(\uniquemap{X^{>1}}_!\varepsilon^*\xi)_{hG}
    \end{tikzcd},
    \end{equation}
    where the horizontal and vertical sequences are cofiber sequences and where we used the shorthand $Q\coloneqq \cofib \left( \uniquemap{X^{>1}}_! \varepsilon^* \xi \rightarrow \uniquemap{X}_! \xi \right)$.    
    By \cite[Lemma B.0.1]{HKK_PD}, the red and blue composite in \cref{diag:black_magic_Atiyah-Bott} are equivalent up to a sign.
\end{cons}

\begin{cons}[Gluing classes, {\cite[Cons. 4.5.5]{HKK_PD}}]\label{cons:generalised_gluing_class}
    Let $\underline{X} \in \spc_G$ be a $G$-Poincar\'e space with dualising spectrum $D_{\underline{X}} \in \func_G(\underline{X}, \myuline{\spectra})$ and collapse map $c \colon \sphere_G \rightarrow \uniquemap{X}_! D_{\underline{X}}$.
    The \textit{gluing class} of $\udl{X}$ is defined to be the composite
    \begin{equation*}
        \sphere 
        \xlongrightarrow{\canonical} \sphere_G^{hG} 
        \xlongrightarrow{c^{hG}} (\uniquemap{X}_! D_{\underline{X}})^{hG}
        \longrightarrow \Sigma(\uniquemap{X^{>1}}_! \varepsilon^* D_{\underline{X}})_{hG},
    \end{equation*}
    where the last map is the blue composite from \cref{diag:black_magic_Atiyah-Bott}.
    The \textit{linearised gluing class} is obtained by postcomposing with the map induced by $\sphere \to \bbZ$
    \begin{equation*}
        \Sigma(\uniquemap{X^{>1}}_! \varepsilon^* D_{\underline{X}})_{hG} 
        \rightarrow \Sigma(\uniquemap{X^{>1}}_! \varepsilon^* D_{\underline{X}} \otimes \bbZ)_{hG}.
    \end{equation*}
\end{cons}

We now specialise to the case $G=C_p$. Recall from \cref{cons:singular_part} that for $\udl{X}\in\spc_{C_p}$, we have $\udl{X}^{>1}\simeq \udl{X^{C_p}}$ coming from the fact that $X^{>1}\simeq X^{C_p}$.

\begin{prop}[Abstract condition (H)]\label{prop:property_H}
    Let $\udl{X}$ be a $C_p$-Poincar\'e space with discrete fixed points such that each component of $X^e$ has positive dimension.
    Then the linearised gluing class
    \[ 
    \sphere 
    \rightarrow \Sigma(\uniquemap{X^{>1}}_! \varepsilon^* D_{\udl{X}} \otimes  \bbZ)_{hC_p} 
    \xleftarrow{\simeq} \bigoplus_{y \in X^{C_p}} \Sigma(D_{\udl{X}}(y) \otimes \bbZ)_{hC_p} 
    \]
    maps a generator of $\pi_0(\sphere) \simeq \bbZ$ to a generator of $\pi_0 \Sigma(D_{\udl{X}}(y) \otimes \bbZ)_{hC_p} \simeq \bbZ/p$ in each summand.
\end{prop}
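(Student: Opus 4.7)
The strategy is to reduce to a per-fixed-point statement, identify $D_{\udl X}(y)$ as a representation sphere, compute the mod-$p$ target via the Tate fibre sequence, and then recognise the local gluing class as (the Tate image of) an equivariant Euler class, which is a unit modulo $p$.

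\emph{Reduction and local structure.} Since $X^{>1}=X^{C_p}$ is discrete and $\udl X$ is compact, $X^{C_p}$ is a finite set, so $\udl{X^{C_p}} \simeq \coprod_y \udl\ast$ and $\uniquemap{X^{>1}}_!\varepsilon^* D_{\udl X} \simeq \bigoplus_y D_{\udl X}(y)$; this gives the equivalence asserted on the right of the display. By naturality with respect to projection onto each summand, it suffices to show that each $y$-component of the linearised gluing class hits a generator of its target. By \cref{thm:HKK_Thm.4.2.9} the invertible $C_p$-spectrum $D_{\udl X}(y)$ satisfies $\Phi^{C_p}D_{\udl X}(y) \simeq \sphere$ and $D_{\udl X}(y)^e \simeq \Sigma^{-d}\sphere$ for the local dimension $d = d_y \geq 1$ of $X^e$ at $y$, hence $D_{\udl X}(y) \simeq \sphere^{-V}$ for a real orthogonal $C_p$-representation $V$ of dimension $d$ with $V^{C_p}=0$.

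\emph{Target group.} Put $E \coloneqq \sphere^{-V}\otimes\bbZ$, whose underlying Borel $C_p$-spectrum is $\Sigma^{-d}\bbZ_{\det V}$. The homotopy fixed point and Tate spectral sequences collapse, yielding $\pi_n E^{hC_p} = H^{-d-n}(BC_p;\bbZ_{\det V})$ and $\pi_n E^{tC_p} = \hat H^{-d-n}(C_p;\bbZ_{\det V})$. For $d \geq 1$ one has $\pi_0 E^{hC_p} = 0$, and the condition $V^{C_p}=0$ imposes the parity constraint on $d$ that makes $\pi_0 E^{tC_p} \simeq \bbZ/p$ and $\pi_{-1} E^{hC_p} = 0$. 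The long exact sequence of the Tate fibre sequence $E_{hC_p}\to E^{hC_p}\to E^{tC_p}$ then forces the connecting map to be an isomorphism $\pi_0 E^{tC_p} \xrightarrow{\simeq} \pi_0 \Sigma E_{hC_p}$, identifying the target with $\bbZ/p$ and reducing the claim to showing that the image of $1 \in \pi_0\sphere$ in $\pi_0 E^{tC_p}$ is a generator.

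\emph{The local image is a unit.} This image is computed by the composite
\[ \sphere \to \sphere^{tC_p} \xrightarrow{c^{tC_p}} (\uniquemap{X}_! D_{\udl X})^{tC_p} \to (\uniquemap{X}_! D_{\udl X}\otimes\bbZ)^{tC_p} \xrightarrow{\simeq} \bigoplus_y E^{tC_p} \xrightarrow{\mathrm{pr}_y} E^{tC_p}, \]
where the displayed Tate equivalence holds because $\cofib(\uniquemap{X^{C_p}}_!\varepsilon^* D_{\udl X} \to \uniquemap{X}_! D_{\udl X})$ has free underlying $C_p$-action and hence vanishing Tate. Under this identification, the $y$-projection of $(c\otimes\bbZ)^{tC_p}$ corresponds to the image of the $C_p$-equivariant integral Euler class $e(V)\otimes\bbZ \colon \bbZ \to \sphere^{-V}\otimes\bbZ$ in Tate cohomology. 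Since $V^{C_p}=0$, the Euler class $e(V)$ becomes a unit in $\pi_\star\bbZ^{tC_p}$ (because $\bbZ^{tC_p}$ is periodic and $e$ of any fixed-point-free representation of $C_p$ realises a power of the periodicity element, a concrete form of the Burnside congruences); as the composite $\sphere \to \sphere^{tC_p} \to \bbZ^{tC_p}$ sends $1$ to the generator of $\pi_0 \bbZ^{tC_p} \simeq \bbZ/p$, the overall composite sends $1 \in \pi_0\sphere$ to a generator of $\bbZ/p$.

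I expect the main obstacle to be the identification of the $y$-projection of $c^{tC_p}$ with the Euler class in this abstract Poincaré setting: although the analogous statement is standard for smooth manifolds via Atiyah--Bott localisation, in the Poincaré setting one must work a bit to reduce to the universal local model $\udl X = \sphere^V$ (where both sides can be computed directly) via an equivariant Thom-type collapse map around $y$ which is an equivalence on the singular part.
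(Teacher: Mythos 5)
Your reduction to summands, the computation of the target via the Tate fibre sequence (using $d\geq 1$ to make the boundary $\pi_0(\--)^{tC_p}\to\pi_0\Sigma(\--)_{hC_p}$ an isomorphism of groups of order $p$), and the transfer of the collapse class to the fixed locus via Tate-vanishing of $\cofib(\uniquemap{X^{>1}}_!\varepsilon^*D_{\udl{X}}\to\uniquemap{X}_!D_{\udl{X}})$ all match the structure of the paper's argument (the latter vanishing is fine because away from the fixed points a $C_p$-action is free, so the cofibre is built from induced cells). The divergence — and the genuine gap — is the step you yourself flag: the identification of the $y$-component of $c^{tC_p}$ with (the Tate image of) an equivariant Euler class, via a reduction to the ``universal local model'' $\udl{X}=\sphere^V$ by an equivariant Thom-type collapse around $y$. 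In the Poincar\'e setting there is no such collapse: a $C_p$-Poincar\'e space has no equivariant tubular neighbourhoods and no local linear structure at fixed points (this already fails for topological, non-smooth $C_p$-manifolds), so the proposed reduction does not obviously exist, and nothing in the hypotheses produces a map $\udl{X}\to\sphere^V$ with the required properties. Relatedly, writing $D_{\udl{X}}(y)\simeq\sphere^{-V}$ for an honest orthogonal representation $V$ requires the full Picard computation for $\spectra_{C_p}$ (that invertibles are detected by underlying and geometric dimensions and realised by representation spheres); this is citable but is a heavier input than what is actually needed.

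The paper closes exactly this gap without any local model or Euler class: by \cref{obs:commuting_square_of_geometric_fixed_points} (resting on the fact that $\varepsilon_!\to\id$ is a $\Phi^{C_p}$-equivalence) the collapse map $c_{\udl{X}}$ is compatible under geometric fixed points with the collapse map of the discrete Poincar\'e space $X^{C_p}$, whose class is simply the diagonal generator in $\bigoplus_{y}\bbZ$ by \cref{thm:HKK_Thm.4.2.9}; one then pushes this along the canonical comparison $\Phi^{C_p}(D_{\udl{X}}(y)\otimes\bbZ)\to(D_{\udl{X}}(y)\otimes\bbZ)^{tC_p}$, which on $\pi_0$ is the surjection $\bbZ\to\bbZ/p$ by the homological facts about invertible $C_p$-spectra recorded in \cref{rec:invertible_Cp_spectra_Z}. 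In effect, the naturality of $\Phi^{C_p}$ replaces your Thom collapse, and the $\Phi^{C_p}\to(\--)^{tC_p}$ comparison replaces the Euler-class unit (your step (c) is the representation-sphere incarnation of the same fact). If you reformulate your step (b) so as to lift the local class through geometric fixed points rather than through a geometric collapse, your argument becomes correct — but at that point it coincides with the paper's proof.
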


\begin{recollect}[Invertible $C_p$-spectra and group (co)homology]\label{rec:invertible_Cp_spectra_Z}
    For the proof of \cref{prop:property_H}, recall the following facts about the homology of invertible $C_p$-spectra.
    Recall that for an abelian group with $G$-action $M$, writing $M[d]$ for the corresponding object in $\module_\bbZ^{BG}$ concentrated in degree $d$, we have
    \[ \pi_* M[d]_{hG} \simeq H_{*-d}(G;M), \hspace{2mm} \pi_{*} M[d]^{hG} \simeq H^{d-*}(G;M), \hspace{2mm} \pi_{*} M[d]^{tG} \simeq \widehat{H}^{d-*}(G;M).  \]
    For $E \in \Pic(\spectra_{C_p})$ there are integers $d^e$ and $d^f$ such that $E^{e} \otimes \bbZ \simeq \bbZ[d^e]$ after forgetting the $C_p$-action and $\Phi^{C_p}E \otimes \bbZ \simeq  \bbZ[d^f]$. We write $\bbZ$ for the trivial $C_p$-representation and $\bbZ^\sigma$ for the sign $C_2$-representation.
    From \cite[Sec. 8.1.]{krause2020picard} and elementary group homology computations we obtain \cref{tab:group_homologies}.
    In each case, if $d^e +1 \leq d^f$, another group homological computation together with \cref{tab:group_homologies} shows that the map
    \[ \pi_{d^f}(E^e \otimes \mathbb{Z})^{tC_p} \rightarrow \pi_{d^f-1}(E^e \otimes \mathbb{Z})_{hC_p} \]
    is an isomorphism between cyclic groups of order $p$.
    \begin{table}[h]
        \centering
        \begin{tabular}{|c|c|c|}
        \hline
             & $d^e -d^f$ even & $d^e - d^f$ odd \\
        \hline
         \multirow{2}{4em}{$p$ odd}    & $\pi_* E^{e} \otimes \bbZ \simeq \bbZ[d^e]$ & \multirow{2}{4em}{impossible} \\
         & $\pi_{d^f}(E^{e} \otimes \bbZ)^{tC_p} \simeq \bbZ/p$ &  \\
         
        \hline
         \multirow{2}{4em}{$p=2$}    & $\pi_* E^{e} \otimes \bbZ \simeq \bbZ[d^e]$ & $\pi_* E^{e} \otimes \bbZ \simeq \bbZ^{\sigma}[d^e]$ \\
         & $\pi_{d^f}(E^{e} \otimes \bbZ)^{tC_p} \simeq \bbZ/p$ & $\pi_{d^f}(E^{e} \otimes \bbZ)^{tC_p} \simeq \bbZ/p$ \\
        \hline
        \end{tabular}
        \caption{Homological information on invertible $C_p$-spectra}
        \label{tab:group_homologies}
    \end{table}
\end{recollect}

\begin{obs}\label{obs:commuting_square_of_geometric_fixed_points}
    Consider a map $f \colon \udl{Y} \to \udl{X}$ in $\spc_G$.
    We claim that there is a commuting  diagram of functors $\myuline{\spectra}^{\udl{X}} \to \myuline{\spectra}^{\Phi\widetilde{\proper}}$
    \begin{equation}
    \begin{tikzcd}\label{diag:Beck_Chevalley_unit_commutation}
        X_! \Phi \ar[r,"c", leftarrow] \ar[d, "\simeq"] 
        &  X_! f_! f^* \Phi \ar[d, "\simeq"]
        \\
        \Phi X_!\ar[r, "c", leftarrow]
        & \Phi X_! f_! f^*,
    \end{tikzcd}
    \end{equation}
    where the vertical maps are the Beck-Chevalley transformations, which are equivalences as the geometric fixed points functor $\Phi \colon \myuline{\spectra} \to \myuline{\spectra}^{\Phi\widetilde{\proper}}$ from \cite[Construction 2.2.31]{HKK_PD} preserves parametrised colimits, and the horizontal maps are induced by the adjunction counit $c \colon f_! f^* \to \id$.
    This follows immediately from \cite[Lemma 2.2.6]{kaifNoncommMotives}, again using that $\Phi$ preserves parametrised colimits. Importantly, the top map (and hence also the bottom map) is an equivalence by \cite[Lem. 4.2.3]{HKK_PD}.

    Notice also that, by naturality of Beck-Chevalley transformations, if we have a decomposition $\udl{Y} = \myuline{Y_1} \coprod \myuline{Y_2}$, the right vertical map in \cref{diag:Beck_Chevalley_unit_commutation} is compatible with this splitting.
\end{obs}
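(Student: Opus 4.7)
The strategy is to view the diagram as the naturality square of the Beck-Chevalley transformation $\mathrm{BC} \colon X_! \Phi \Rightarrow \Phi X_!$ (a natural transformation of functors $\myuline{\spectra}^{\udl{X}} \to \myuline{\spectra}^{\Phi\widetilde{\proper}}$) applied to the counit $c \colon f_! f^* \Rightarrow \id_{\myuline{\spectra}^{\udl{X}}}$. Concretely, the vertical arrows arise as $\mathrm{BC}$ evaluated on $\id$ and on $f_! f^*$, respectively, while the horizontal arrows are the images of $c$ under $X_! \Phi$ and $\Phi X_!$. Commutativity of the square is therefore precisely the statement that $\mathrm{BC}$ is natural in its input functor; in the parametrised $\infty$-categorical setting this naturality of mate calculus is formalised by \cite[Lem.~2.2.6]{kaifNoncommMotives}, and I would simply invoke that result to produce the commuting square.

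For the equivalence claims, I would argue that the vertical Beck-Chevalley maps are equivalences because $\Phi$ preserves parametrised colimits (a property of $\Phi$ established in \cite[Construction~2.2.31]{HKK_PD}) and $X_!$ is by definition the parametrised colimit indexed by $\udl{X}$; applied once with no $f$ and once with $f_! f^*$ plugged in, this gives both vertical equivalences in one stroke. The top horizontal map being an equivalence is the statement of \cite[Lem.~4.2.3]{HKK_PD}, and then the bottom horizontal is automatically an equivalence by two-out-of-three in the now-commuting square.

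For the compatibility with a decomposition $\udl{Y} \simeq \myuline{Y_1} \coprod \myuline{Y_2}$, I would observe that such a decomposition induces a splitting $f_! f^* \simeq (f_1)_!(f_1)^* \oplus (f_2)_!(f_2)^*$ of endofunctors of $\myuline{\spectra}^{\udl{X}}$, and that $\mathrm{BC}$ respects this summand-by-summand by its functoriality in the $!$-pushforward. There is no real obstacle here: the entire observation is a bookkeeping consequence of mate calculus in the parametrised $\infty$-category of $G$-spectra, the sole substantive input being that $\Phi$ preserves parametrised colimits. The only place one has to be careful is in articulating precisely what category of natural transformations one is working in so that \cite[Lem.~2.2.6]{kaifNoncommMotives} applies verbatim.
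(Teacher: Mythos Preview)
Your proposal is correct and matches the paper's own argument essentially verbatim: the paper also invokes \cite[Lem.~2.2.6]{kaifNoncommMotives} for the commutativity (viewing the square as a naturality square for the Beck--Chevalley transformation), uses that $\Phi$ preserves parametrised colimits for the vertical equivalences, cites \cite[Lem.~4.2.3]{HKK_PD} for the top horizontal equivalence, and notes that the decomposition compatibility is just naturality of the Beck--Chevalley transformation.
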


\begin{proof}[Proof of {\cref{{prop:property_H}}}.]
   By construction, the gluing class factors through the map
   \[ (\uniquemap{X^{>1}}_! \varepsilon^* D_{\underline{X}} \otimes \bbZ)^{tC_p} \rightarrow \Sigma (\uniquemap{X^{>1}}_! \varepsilon^* D_{\underline{X}} \otimes \bbZ)_{hC_p} \]
   which happens to be an isomorphism on $\pi_0$ using \cref{rec:invertible_Cp_spectra_Z} and that $\udl{X}^{>1}$ is a discrete $C_p$-space (so that $d^f =0$) with trivial $C_p$-action.
   It thus suffices to show that the gluing class maps to a generator in each summand of 
   \[\pi_0(\uniquemap{X^{>1}}_! \varepsilon^* D_{\underline{X}} \otimes  \bbZ)^{tC_p} \simeq  \bigoplus_{y \in X^{C_p}} \pi_0 (D_{\udl{X}}(y) \otimes  \bbZ)^{tC_p} 
   \simeq \bigoplus_{X^{C_p}} \bbZ/p. \]
   
    We have the following commutative diagram.
    \begin{center}
    \begin{tikzcd}
        \sphere \arrow[d, "\simeq"] \arrow[ddd, bend right, blue] \arrow[r, "c_{\udl{X}}^{C_p}"] 
        & \uniquemap{X^{>1}}_! D_{X^{C_p}} \arrow[d, "\simeq", color=violet]
        & \uniquemap{X^{>1}}_! D_{X^{C_p}} \arrow[d, "\simeq ", color=violet] \arrow[l, "\id", color=violet]
        &
        \\
        \Phi^{C_p} \sphere\arrow[d] \arrow[r, "\Phi^{C_p}c_{\udl{X}}"]
        & \Phi^{C_p} (\uniquemap{X}_! D_{\underline{X}}) \arrow[d] 
        & \Phi^{C_p} (\uniquemap{X^{>1}}_! \varepsilon^* D_{\underline{X}}) \arrow[l, "\simeq", color=violet] \arrow[d] \arrow[r] 
        & \Phi^{C_p} (\uniquemap{X^{>1}}_! \varepsilon^* D_{\underline{X}} \otimes  \infl^e_{C_p}\bbZ) \arrow[d] 
        \\
        \sphere^{tC_p} \arrow[r, "c_{\udl{X}}^{tC_p}"]
        & (\uniquemap{X}_! D_{\underline{X}})^{tC_p}
        & (\uniquemap{X^{>1}}_! \varepsilon^*D_{\underline{X}})^{tC_p} \arrow[l, "\simeq", blue] \arrow[r, blue]
        & (\uniquemap{X^{>1}}_! \varepsilon^* D_{\underline{X}} \otimes  \bbZ)^{tC_p}
        \\
        \sphere^{hC_p} \arrow[u] \arrow[r, "c_{\udl{X}}^{hC_p}", blue]
        & (\uniquemap{X}_! D_{\underline{X}})^{hC_p} \arrow[u, blue]
        &
        &
    \end{tikzcd}
    \end{center}
    The rightmost part is induced by the ring map $\sphere \to \bbZ$.
    The violet square is obtained from \cref{obs:commuting_square_of_geometric_fixed_points} applied to $f = \varepsilon\colon \udl{X^{C_p}}\simeq \udl{X}^{>1}\rightarrow\udl{X}$ and $D_{\udl{X}}$, using the equivalence $\Phi^{C_p} D_{\udl{X}} \simeq D_{X^{C_p}}$ from \cref{thm:HKK_Thm.4.2.9} and $\varepsilon^{C_p} = \id_{X^{C_p}}$.
    
    By definition,  the blue route recovers the gluing class.
    Following the upper route to the same object gives a class having the desired properties. Indeed, on $\pi_0$ the upper route reads
    \begin{equation*}
    \begin{tikzcd}
        \mathbb{Z} \arrow[r, "\Delta"] 
        & \bigoplus_{X^{C_p}}\mathbb{Z} 
        & \bigoplus_{X^{C_p}} \mathbb{Z} \arrow[l, equal] \arrow[d, "\simeq", color =red] 
        &                                                               
        \\
        &
        & \bigoplus_{X^{C_p}} \mathbb{Z} \arrow[r, "\simeq"]
        & \bigoplus_{X^{C_p}} \mathbb{Z} \arrow[d, "\text{proj}"] 
        \\
        &
        &
        & \bigoplus_{X^{C_p}} \mathbb{Z}/p                       
    \end{tikzcd}
    \end{equation*}
    where $\text{proj}$ refers to the sum of the projection maps $\bbZ \rightarrow \bbZ/p$. Here, all maps in sight preserve the individual summands of $\bigoplus_{X^{C_p}} \bbZ$: the only potentially nonobvious case is the vertical red map, which is dealt with in \cref{obs:commuting_square_of_geometric_fixed_points}. \qedhere
\end{proof}

\begin{proof}[Proof of \cref{thm:condition_H_automatic}]
    Recall that Condition (H) from \cref{nota:condition_H} asks about surjectivity of the upper composite in the diagram
    \begin{equation*}
    \begin{tikzcd}
        H^{\Gamma}_d(\classifyingspace{\finite}{\Gamma}, \classifyingspace{\finite}{\Gamma}^{>1}) \ar[r, "\partial"] \ar[d, "\simeq"]
        & H^{\Gamma}_{d-1} (\classifyingspace{\finite}{\Gamma}^{>1}) \ar[r] \ar[d, "\simeq"]
        & H^{F}_{d-1}(\udl{*}) \ar[d, "\simeq"]
        \\
        H^{\Gamma/\pi}_d(\pi \backslash \classifyingspace{\finite}{\Gamma}, \pi \backslash \classifyingspace{\finite}{\Gamma}^{>1}) \ar[r, "\partial"]
        & H^{\Gamma/\pi}_{d-1}(\pi \backslash \classifyingspace{\finite}{\Gamma}^{>1}) \ar[r]
        & H^{\Gamma/\pi}_{d-1}(\udl{*}),
    \end{tikzcd}
    \end{equation*}
    where the right horizontal maps are induced from the the projection onto the $F$-component in the splitting $\classifyingspace{\finite}{\Gamma}^{>1} = \coprod_{F' \in \mathcal{M}} \Gamma/F'$.
    We may thus equivalently show surjectivity of the lower horizontal composite.
    Denote $\udl{X} = \pi \backslash \classifyingspace{\finite}{\Gamma}$, which is $C_p$-Poincar\'e by \cref{thm:extension_by_Cp_are_gvd} since  in this case, $W_{\Gamma}F\cong \{e\}$ and $\myuline{E_{\Gamma}\finite}$ is compact by \cref{example:meinstrup_schick}.
    
    Now by  definition, the bottom composite in the diagram above is obtained by postcomposing $\cofib(X^{>1}_! \epsilon^* D_{\udl{X}} \otimes \bbZ \to X_! D_{\udl{X}} \otimes \bbZ )_{h C_p} \to \Sigma (X^{>1}_! \epsilon^* D_{\udl{X}} \otimes \bbZ)_{h C_p}$ with projection to a component of $X^{C_p}$. Thus, by \cref{prop:property_H} and with the alternative description of the gluing class via the red route in \cref{cons:gluing_class}, we  obtain the required surjectivity.
\end{proof}

\printbibliography

@book {Brown_Cohomology_of_Groups,
    AUTHOR = {Brown, K.S.},
     TITLE = {Cohomology of groups},
    SERIES = {Graduate Texts in Mathematics},
    VOLUME = {87},
      NOTE = {Corrected reprint of the 1982 original},
 PUBLISHER = {Springer-Verlag, New York},
      YEAR = {1994}
}

@article {Meintrup_Schick,
    AUTHOR = {Meintrup, D. and Schick, T.},
     TITLE = {A model for the universal space for proper actions of a
              hyperbolic group},
   JOURNAL = {New York J. Math.},
  FJOURNAL = {New York Journal of Mathematics},
    VOLUME = {8},
      YEAR = {2002},
}

@article {Abels_universal,
    AUTHOR = {Abels, H.},
     TITLE = {A universal proper {$G$}-space},
   JOURNAL = {Math. Z.},
  FJOURNAL = {Mathematische Zeitschrift},
    VOLUME = {159},
      YEAR = {1978},
    NUMBER = {2},
     PAGES = {143--158}
}

@article{HKK_PD,
      title={Parametrised Poincar\'e duality and equivariant fixed points methods}, 
      author={ Hilman, K. and  Kirstein, D. and  Kremer, C.},
      year={2024},
      journal={\href{https://arxiv.org/abs/2405.17641}{arXiv:2405.17641}}
}

@article {FarbLooijenga,
    AUTHOR = {Farb, B. and Looijenga, E.},
     TITLE = {The {N}ielsen realization problem for {K}3 surfaces},
   JOURNAL = {J. Differential Geom.},
  FJOURNAL = {Journal of Differential Geometry},
    VOLUME = {127},
      YEAR = {2024},
    NUMBER = {2},
}

@article {KonnoSpinAGT,
    AUTHOR = {Konno, H.},
     TITLE = {Dehn twists and the Nielsen realization problem for spin 4–manifolds},
   JOURNAL = {Alg. Geom. Top.},
    VOLUME = {24},
      YEAR = {2024},
    NUMBER = {3},
     PAGES = {1739-–1753}
}

@article {LeeDelPezzoAdvances,
    AUTHOR = {Lee, S.E.B.},
     TITLE = {Isotopy classes of involutions of del Pezzo surfaces},
   JOURNAL = {Adv. Math.},
    VOLUME = {426},
      YEAR = {2023},
}

@article {BaragliaKonno,
    AUTHOR = {Baraglia, D. and Konno, H.},
     TITLE = {A note on the Nielsen realization problem for K3 surfaces},
   JOURNAL = {Proc. A.M.S.},
    VOLUME = {151},
      YEAR = {2023},
    NUMBER = {9},
     PAGES = {4079-–4087}
}

@article {Raymond-Scott_Failure_of_Nielsen,
    AUTHOR = {Raymond, F. and Scott, L.L.},
     TITLE = {Failure of {N}ielsen's theorem in higher dimensions},
   JOURNAL = {Arch. Math. (Basel)},
  FJOURNAL = {Archiv der Mathematik},
    VOLUME = {29},
      YEAR = {1977},
    NUMBER = {6},
     PAGES = {643--654}
}

@article{Weinberger_Variations_on_a_theme_of_Borel,
    AUTHOR = {Weinberger, S.},
     TITLE = {Variations on a theme of {B}orel: an essay on the role of the fundamental group in rigidity},
    SERIES = {Cambridge Tracts in Mathematics},
    VOLUME = {213},
 PUBLISHER = {Cambridge University Press, Cambridge},
      YEAR = {2023},
}

@article {Block-Weinberger,
    AUTHOR = {Block, J. and Weinberger, S.},
     TITLE = {On the generalized {N}ielsen realization problem},
   JOURNAL = {Comment. Math. Helv.},
  FJOURNAL = {Commentarii Mathematici Helvetici. A Journal of the Swiss
              Mathematical Society},
    VOLUME = {83},
      YEAR = {2008},
    NUMBER = {1},
     PAGES = {21--33}
}

@article {LWClassifying,
    AUTHOR = {L\"{u}ck, W. and Weiermann, M.},
     TITLE = {On the classifying space of the family of virtually cyclic
              subgroups},
   JOURNAL = {Pure Appl. Math. Q.},
  FJOURNAL = {Pure and Applied Mathematics Quarterly},
    VOLUME = {8},
      YEAR = {2012},
    NUMBER = {2},
     PAGES = {497--555}
}

@article{Cnossen2023,
      title={Twisted ambidexterity in equivariant homotopy theory}, 
      author={Cnossen, B.},
      year={2023},
      journal={\href{https://arxiv.org/abs/2303.00736}{arXiv:2303.00736}}
}

@article{davis2024nielsen,
      title={On Nielsen realization and manifold models for classifying spaces}, 
      author={Davis, J.F. and  L\"uck, W.},
      year={2024},
      journal={\href{https://arxiv.org/abs/2303.15765}{arXiv:2303.15765}}
}

@article {Hermitian1,
    AUTHOR = {Calm\`es, B. and Dotto, E. and Harpaz, Y.
              and Hebestreit, F. and Land, M. and Moi, K.
              and Nardin, D. and Nikolaus, T. 
              and Steimle, W.},
    TITLE = {Hermitian {K}-theory for stable {$\infty$}-categories {I}:
              {F}oundations},
    JOURNAL = {Selecta Math. (N.S.)},
    FJOURNAL = {Selecta Mathematica. New Series},
    VOLUME = {29},
    YEAR = {2023},
    NUMBER = {1},
}

@article {Hermitian2,
    AUTHOR = {Calm\`es, B. and Dotto, E. and Harpaz, Y.
              and Hebestreit, F. and Land, M. and Moi, K.
              and Nardin, D. and Nikolaus, T. 
              and Steimle, W.},
    TITLE = {Hermitian {K}-theory for stable {$\infty$}-categories {II}:
              {Cobordism categories and additivity}},
    YEAR = {2023},
    journal = {\href{https://arxiv.org/abs/2009.07224}{arXiv:2009.07224}}
}

@article{krause2020picard,
      title={The Picard group in equivariant homotopy theory via stable module categories}, 
      author={ Krause, A.},
      year={2020},
      journal={\href{https://arxiv.org/abs/2008.05551}{arXiv:2008.05551}}
}

@incollection {Lueck_Survey_Classifying_Spaces,
    AUTHOR = {L\"uck, W.},
     TITLE = {Survey on classifying spaces for families of subgroups},
 BOOKTITLE = {Infinite groups: geometric, combinatorial and dynamical
              aspects},
    SERIES = {Progr. Math.},
    VOLUME = {248},
     PAGES = {269--322},
 PUBLISHER = {Birkh\"auser, Basel},
      YEAR = {2005}
}

@article{lueck2022brown,
      title={On Brown's Problem, Poincare' models for the classifying spaces for proper actions and Nielsen Realization}, 
      author={Wolfgang L\"uck},
      year={2022},
      journal={\href{https://arxiv.org/abs/2201.10807}{arXiv:2201.10807}}
}

@article{kaifNoncommMotives,
  title={{Parametrised Noncommutative Motives and Equivariant Cubical Descent in Algebraic K-theory}},
  author={Hilman, K.},
  journal = {\href{https://arxiv.org/abs/2202.02591}{arXiv:2202.02591}},
  year = {2024}
}

@article {Kerckhoff_Nielsen,
    AUTHOR = {Kerckhoff, S.P.},
     TITLE = {The {N}ielsen realization problem},
   JOURNAL = {Ann. of Math. (2)},
  FJOURNAL = {Annals of Mathematics. Second Series},
    VOLUME = {117},
      YEAR = {1983},
    NUMBER = {2},
     PAGES = {235--265}
}

@article {Nielsen_Original,
    AUTHOR = {Nielsen, J.},
     TITLE = {Untersuchungen zur {T}opologie der geschlossenen zweiseitigen
              {F}l\"{a}chen. {III}},
   JOURNAL = {Acta Math.},
  FJOURNAL = {Acta Mathematica},
    VOLUME = {58},
      YEAR = {1932},
    NUMBER = {1},
     PAGES = {87--167}
}

@article {davisLeary,
    AUTHOR = {Davis, M.W. and Leary, I.},
     TITLE = {Some examples of discrete group actions on aspherical manifolds},
   JOURNAL = {High-dimensional manifold topology},
    PUBLISHER = {World Sci. Publ., River Edge, NJ},
      YEAR = {2003},
     PAGES = {139--150}
}

@article{shahDualisables,
      title={Dualizable objects in stratified categories and the 1-dimensional bordism hypothesis for recollements},
      author={Kondyrev, G. and Mazel--Gee, A. and Shah, J.},
      year={2021},
      journal = {\href{https://arxiv.org/abs/2103.15785}{arXiv:2103.15785}}
}

@article{lurieHA,
    title={{Higher Algebra}},
    author={Lurie, J.},
    journal = {\href{https://www.math.ias.edu/~lurie/papers/HA.pdf}{available on author's webpage}},
    YEAR = {2017}
}

@book {lurieHTT,
    AUTHOR = {Lurie, J.},
     TITLE = {Higher topos theory},
    SERIES = {Annals of Mathematics Studies},
    VOLUME = {170},
 PUBLISHER = {Princeton University Press, Princeton, NJ},
      YEAR = {2009},
}

@article{nikolausScholze,
  title={{On Topological Cyclic Homology}},
  author={Nikolaus, T. and Scholze, P.},
  series =  {Acta Math.},
    VOLUME = {221},
      YEAR = {2018},
    NUMBER = {2},
     PAGES = {203--409}
}

@article {MNN17,
    AUTHOR = {Mathew, A. and Naumann, N. and Noel, J.},
     TITLE = {Nilpotence and descent in equivariant stable homotopy theory},
   JOURNAL = {Adv. Math.},
  FJOURNAL = {Advances in Mathematics},
    VOLUME = {305},
      YEAR = {2017},
     PAGES = {994--1084},
}

\end{document}